\title{Spanning structures and universality in sparse hypergraphs}
\author{Olaf Parczyk}
\author{Yury Person}
\address{Goethe-Universit\"at, Institut f\"ur Mathematik,
  Robert-Mayer-Str. 10, 60325 Frankfurt am Main, Germany}
\email{parczyk | person@math.uni-frankfurt.de}
\date{\today}
\newtheorem{theorem}{Theorem}[section]
\newtheorem{lemma}[theorem]{Lemma}
\newtheorem{proposition}[theorem]{Proposition}
\newtheorem{corollary}[theorem]{Corollary}
\newtheorem{definition}[theorem]{Definition}
\newcommand{\Gnp}{G(n,p)}
\newcommand{\Lrlk}{L^{({r})}(\ell,k)}
\newcommand{\Hnp}{\mathcal{H}^{(r)}(n,p)}
\newcommand{\Hnmp}{\cH(n,p\binom{n}{r})}
\newcommand{\cH}{\mathcal{H}}
\newcommand{\cF}{\mathcal{F}}
\newcommand{\edgeN}{\binom{n}{r}}
\newcommand{\hatp}{\hat{p}}
\newcommand{\PP}{\mathbb{P}}
\newcommand{\EE}{\mathbb{E}}
\newcommand{\Qrd}{Q^{(r)}(d)}
\newcommand{\Var}{\mathrm{Var}}
\newcommand{\Nat}{\mathbb{N}}
\newcommand{\eps}{\varepsilon}
\newcommand{\Srn}{S^r_n}
\newcommand{\Krn}{K^{({r})}_n}
\newcommand{\Crl}{C^{(r,\ell)}}
\newcommand{\SHp}{S_H^\prime}
\newcommand{\FnD}{\mathcal{F}^{(r)}(n,\Delta)}
\newcommand{\link}{\operatorname{link}}
\newcommand{\Img}{\operatorname{Im}}
\newcommand{\F}{\mathcal{F}}
\newcommand{\cK}{\mathcal{K}}
\newcommand{\cL}{\mathcal{L}}
\newcommand{\PnD}{P^{(r)}(\Delta)}
 \thanks{
    This research was supported by DFG grant PE 2299/1-1. 
  }
\begin{document}

\begin{abstract}
 In this paper the problem of finding various spanning structures in random hypergraphs is studied. We notice that  a general result of Riordan [\emph{Spanning subgraphs of random graphs}, Combinatorics, Probability \& Computing \textbf{9} (2000), no.\ 2, 125--148] can be adapted from random graphs  to random $r$-uniform hypergaphs and provide sufficient conditions when a random $r$-uniform hypergraph $\Hnp$ contains a given spanning structure a.a.s. We also discuss several spanning structures such as cube-hypergraphs, lattices, spheres and Hamilton cycles in hypergraphs.
 
 Moreover, we study universality, i.e.\ when does an $r$-uniform hypergraph contain \emph{any} hypergraph on $n$ vertices and with maximum vertex degree bounded by $\Delta$?
 For $\Hnp$ it is shown that  this holds for $p= \omega \left((\ln n/n)^{1/\Delta}\right)$ a.a.s.\ by combining approaches taken by Dellamonica, Kohayakawa, R{\"o}dl and Ruci{\'n}ski
 [\emph{An improved upper bound on the density of universal random graphs}, Random Structures Algorithms \textbf{46} (2015), no.~2, 274--299] and of Ferber, Nenadov and Peter  [\emph{Universality of random graphs and  rainbow embedding}, 
Random  Structures Algorithms, to appear].  Furthermore it is shown that the  random graph $G(n,p)$ for appropriate $p$ and 
 explicit constructions due to Alon, Capalbo, Kohayakawa, R\"odl,  Ruci\'nski and Szemer\'edi~\cite{ACKRRS01} and Alon and Capalbo~\cite{alon2007sparse,alon2008optimal} of universal graphs yield constructions
 of universal hypergraphs that are sparser than the random hypergraph $\Hnp$ with $p= \omega \left((\ln n/n)^{1/\Delta}\right)$.
 
\end{abstract}

\maketitle

\section{Introduction}

Finding spanning subgraphs is a well studied problem in random graph theory, see e.g.\  the following monographs on random graphs~\cite{Bolobas_RandomGraphs,JansonLuczakRucinski_RandomGraphs}. In the case of hypergraphs  less is known and it is natural to study the corresponding spanning problems for random hypergraphs.

An $r$-uniform hypergraph $H$ is a tuple $(V,E)$, where $V$ is its vertex set and  $E \subseteq \binom{V}{r}$ 
the set of  edges in $H$. The random $r$-uniform hypergraph $\Hnp$ is the probability space of all labelled $r$-uniform hypergraphs with the vertex set $[n]:=\{ 1, 2, \ldots ,n \}$ where each edge $e \in \binom{[n]}{r}$ is chosen independently of all the other edges with probability $p$. Thus, for $r=2$ this is a standard model of the random graph $G(n,p)$. 
 Let $H=H^{(i)}$ be a sequence of fixed $r$-uniform hypergraphs with $n$ vertices, where $n=n(i) \rightarrow \infty$.
Then we say that $\Hnp$ contains the hypergraph $H$ asymptotically almost surely (a.a.s.) if the probability that $H^{(i)} \subseteq \Hnp$  tends to $1$ as $n$ tends to infinity. We say that $\hatp$ is a threshold function  if $\PP[H\subseteq \Hnp]\to 0$ for $p\ll\hatp$  and $\PP[H\subseteq \Hnp]\to 1$ for $p\gg\hatp$  as $n$  tends to infinity. 

 It was shown by Bollob\'as and Thomason~\cite{BolTho87}   that all nontrivial monotone properties have a threshold function. Since subgraph containment is a monotone property it is natural to study the threshold functions for appearance of various structures in random graphs and hypergraphs. The purpose of this paper is to obtain generalizations of several results about spanning structures in random graphs to random hypergraphs. In particular we extend a result of Riordan~\cite{Riordan} on containment of a  given \emph{single} spanning graph in $G(n,p)$  and we  generalize a result of Dellamonica, Kohayakawa, R{\"o}dl and Ruci{\'n}ski~\cite{DKRR15} on 
 \emph{universality}  for the class of bounded degree spanning subgraphs in $G(n,p)$ to random hypergraph $\Hnp$. 
Furthermore, we show that the  random graph $G(n,p)$ for appropriate $p$ and 
 explicit constructions  of universal graphs due to Alon, Capalbo, Kohayakawa, R\"odl,  Ruci\'nski and Szemer\'edi~\cite{ACKRRS01} and Alon and Capalbo~\cite{alon2007sparse,alon2008optimal} yield constructions
 of universal hypergraphs that are sparser than the random hypergraph $\Hnp$ with $p= \omega \left((\ln n/n)^{1/\Delta}\right)$.

\subsection{Single spanning structures}
 First spanning structures considered in graphs were perfect matchings~\cite{ErdRen66} and Hamilton cycles~\cite{Kor76,Pos76} (see also~\cite[Chapter~8]{Bolobas_RandomGraphs} and references therein). More recently, the thresholds for the appearance of (bounded degree) spanning trees~\cite{AloKriSud07,Kri10,HefKriSza12,ferber2013universality,JohKriSam13,KahLubWor14b,KahLubWor14a} were studied as well, for the currently best bounds see~\cite{M14a,M14b}. 

Alon and F\"uredi~\cite{AlonFuredi_SpanningSubgraphs} studied the question when the random graph $\Gnp$
 contains a given graph $G$ of bounded maximum degree $\Delta$ hereby proving the bound $p\ge C(\ln n/n)^{1/\Delta}$ for some absolute constant $C>0$.  
  In~\cite{Riordan} Riordan proved quite a general theorem applicable to various graphs in particular determining the threshold functions for the appearance of spanning  hypercubes and lattices. K\"uhn and Osthus~\cite{KO12} determined an approximate threshold for the appearance of a square of a Hamilton cycle. 
  Finding thresholds for factors of graphs and hypergraphs was long an open problem where breakthrough was achieved by Johansson, Kahn and Vu~\cite{JohanssonKahnVu_FactorsInRandomGraphs}. Kahn and Kalai~\cite{KahnKalai_ThresholdsAndExpectationThresholds} have a general conjecture about the thresholds for the appearance of a given structure (which roughly states that the threshold $p$ with $\PP(G\subseteq \Gnp)=1/2$ for containment of $G$ is within a factor of $O(\ln n)$ from $p_E$ at which the expected number of copies of $G$ in $G(n,p_E)$ is $1$, where $p_E$ is the so-called expectation threshold).
  
When one turns to hypergraphs, so apart from perfect matchings and general factors~\cite{JohanssonKahnVu_FactorsInRandomGraphs}, the only other spanning structures that were studied more recently are \emph{Hamilton cycles}. An $\ell$-overlapping Hamilton cycle is an  $r$-uniform hypergraph  such that for some cyclic ordering of $[n]$ 
and an ordering of the edges, every edge $e_i$ consists of $r$ consecutive vertices and for any two consecutive edges $e_i$ and $e_{i+1}$ it holds $|e_i \cap e_{i+1}|=\ell$ (this requires that $r-\ell$ divides $n$ and thus such $\ell$-overlapping Hamilton cycle has $n/(r-\ell)$ edges).  We say that a hypergraph is $\ell$-hamiltonian if it contains an $\ell$-overlapping Hamilton cycle. Frieze~\cite{Fri10}  determined 
the threshold for the appearance of $1$-overlapping $3$-uniform Hamilton cycles to be $\Theta(\ln n/n^2)$ (when $4|n$) and
Dudek and Frieze~\cite{DudekFrieze_LooseHamiltonCycles} extended the result to higher uniformities ($2(r-1)|n$). 
 The divisibility requirement was improved to the optimal one ($(r-1)|n$) by  Dudek, Frieze, Loh and Speiss~\cite{DFLS_opt}, see also Ferber~\cite{Fer15}.   Subsequently, Dudek and Frieze~\cite{Dudek-Frieze_hamiltonian} determined thresholds for general $\ell$-overlapping Hamilton cycles and a randomized algorithm to find $(r-1)$-overlapping Hamilton cycles was given in~\cite{ABHKPonline}. For a table of the known thresholds we refer the reader to~\cite{Dudek-Frieze_hamiltonian}, but generally $\omega(n^{\ell-r})$ is an asymptotically optimal threshold for $\ell$-Hamiltonicity (for $\ell\ge 2$ and in most situations even more precise results are known). Here and in the following  $\omega(f)$ denotes any function $g$ such that $g(n)/f(n)\rightarrow \infty$ as $n\to \infty$. For another type of Hamilton cycles,  the so-called \emph{Berge} Hamilton cycles, the threshold was determined recently by Poole~\cite{Poole14}.

 In the first part of this paper we observe that Riordan's result~\cite{Riordan} can be extended to $r$-uniform hypergaphs leading to a general theorem about spanning structures in random hypergraphs. We will recover results of Dudek and Frieze~\cite{Dudek-Frieze_hamiltonian} on $\ell$-hamiltonicity ($2\le\ell<r$) and also discuss thresholds for other spanning structures such as hypercubes, lattices, spheres and powers of Hamilton cycles in hypergraphs. 

Let $H=(V,E)$ be an $r$-uniform hypergraph with $n$ vertices. We write $v(H)$ for $|V|$ and $e(H)$ for $|E|$. We denote by $\deg(v)$  the degree of a vertex $v$ in $H$: $\deg(v):=|\{e\colon v\in e\}|$, and  $\Delta(H)$ is defined to be  the maximum vertex degree in $H$, i.e.\ $\Delta(H):=\max_{v\in V}\deg(v)$. Let $e_H(v)=\max \{ e(F) : F \subseteq H, v(F)=v \}$, then 
the following parameter introduced in~\cite{Riordan} will be responsible for the upper bound on the threshold
\begin{align*}
\gamma(H) = \max_{r+1 \le v \le n} \left\{ \frac{e_H(v)}{v-2} \right\}.
\end{align*}
Our first result is the following. 
\begin{theorem}\label{theorem_main}
Let $r\ge 2$ be an integer and  $H=H^{(i)}$ be a  sequence of $r$-uniform hypergraphs with $n=n(i)$ vertices, $\Delta=\Delta(H)$ and 
 $e(H)=\alpha \binom{n}{r} = \alpha (n)\binom{n}{r}$ edges. Let  $p = p(n)\colon \Nat\to [0,1)$. 
  If the following conditions are satisfied 
\begin{align}
\label{eq_theocond}
 \alpha \binom{n}{r} > \frac{n}{r}\quad\text{and}\quad  p\binom{n}{r}\rightarrow \infty,
\end{align}
and
\begin{align}
\label{eq_npinfty}
n p^{\gamma(H)} \Delta^{-4} \rightarrow \infty,
\end{align}
then a.a.s.\ the random $r$-uniform hypergraph $\Hnp$ contains a copy of $H$.
\end{theorem}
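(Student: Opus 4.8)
The plan is to prove this by adapting Riordan's second moment argument~\cite{Riordan} to the $r$-uniform setting. Fix $H$ on the vertex set $[n]$ and let $X=X(\Hnp)$ be (a suitably truncated count of) the copies of $H$ in $\Hnp$: a naive second moment fails — pairs of copies sharing only a sparse sub-hypergraph, such as a matching of common edges, are too numerous — so one discards the copies that meet the atypically dense regions of $\Hnp$, exactly as in~\cite{Riordan}. The aim is to show $\EE[X]\to\infty$ and $\EE[X^2]=(1+o(1))\EE[X]^2$; then $\Var(X)=o(\EE[X]^2)$ and Chebyshev's inequality gives $X\ge 1$, i.e.\ $H\subseteq\Hnp$, a.a.s.

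For the first moment I would first check that a.a.s.\ every copy of $H$ in $\Hnp$ is admissible, by a union bound over sub-hypergraphs of $H$ combined with the standard bound on the number of edges of $\Hnp$ inside a fixed vertex set — this is where $p\binom nr\to\infty$ from~\eqref{eq_theocond} is used. Hence $\EE[X]=(1+o(1))\,N_H\,p^{e(H)}$ with $N_H=n!/|\mathrm{Aut}(H)|$ the number of copies of $H$ in $\Krn$. That $N_H p^{e(H)}\to\infty$ then follows from the hypotheses: the value $v=n$ in the definition of $\gamma$ gives $\gamma(H)\ge e(H)/(n-2)$, so~\eqref{eq_npinfty} forces $e(H)\log(1/p)$ to be of smaller order than $\log n!$; meanwhile $e(H)>n/r$ (from~\eqref{eq_theocond}, which is what rules out the near-matching regime) together with the bound on $\Delta(H)$ keeps $|\mathrm{Aut}(H)|$ small enough that $\log N_H=(1-o(1))\log n!$. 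I expect this step to be routine.

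The substance is the second moment. Expanding $\EE[X^2]$ over ordered pairs of admissible copies and grouping by the overlap $F$ (an edge-subset of $H$, with $v(F)$ non-isolated vertices), one is led to bound an expression of roughly the shape
\begin{align*}
\frac{\EE[X^2]}{\EE[X]^2}\ \le\ 1\ +\ \sum_{\emptyset\neq F\subseteq H}\ \bigl(\text{number of copies of }F\text{ in }H\bigr)^{2}\cdot\frac{(n-v(F))!}{n!}\cdot p^{-e(F)},
\end{align*}
where admissibility kills all terms in which $F$ is too dense for its vertex set. The main combinatorial input — and the point where $r$-uniformity genuinely enters — is an $r$-uniform analogue of Riordan's counting lemma: a connected $r$-uniform sub-hypergraph on $v$ vertices occurs at most $n\,(c\Delta)^{v}$ times in a hypergraph of maximum degree $\Delta$, for some $c=c(r)$, with general $F$ handled component by component. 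Feeding this in, using $e_H(v)\le\gamma(H)(v-2)$ and $(n-v)!/n!\le (2/n)^{v}$ for $v\le n/2$, the total contribution of overlaps on $r+1\le v\le n/2$ vertices becomes a geometric series with ratio of order $\bigl(\mathrm{poly}(\Delta)\,/\,(n\,p^{\gamma(H)})\bigr)^{v}$, which is $o(1)$ precisely by~\eqref{eq_npinfty}; the single-edge overlaps ($v=r$, which is why $\gamma$ starts at $v=r+1$) and the large overlaps ($v>n/2$, where $e_H(v)$ is large and admissibility is correspondingly restrictive) are treated separately and absorbed. The exponent $\gamma(H)$, and in particular the $(v-2)$ rather than $v$ in its denominator, is forced by the two vertices left free when two copies of $H$ are glued along a shared sub-hypergraph.

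The hard part is this second-moment analysis: the admissibility notion has to be calibrated so as to be simultaneously weak enough that a.a.s.\ all copies satisfy it and strong enough that the over-dense overlaps are annihilated, and then the sum over all overlap types must be run while tracking the $\Delta$- and $r$-dependencies carefully enough that exactly $np^{\gamma(H)}\Delta^{-4}\to\infty$ is needed. The genuinely new ingredient beyond~\cite{Riordan} is the $r$-uniform counting lemma together with the verification that the degenerate ranges behave just as in the graph case.
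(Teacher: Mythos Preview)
Your proposal misidentifies Riordan's central idea. Riordan's argument (and the paper's) does \emph{not} truncate $X$ by any admissibility condition on copies; there is no discarding of copies that meet dense regions. The decisive step is instead to carry out the second moment computation in the fixed-edge-count model $\Hnmp$ rather than in $\Hnp$, and only transfer to $\Hnp$ at the very end. In $\Hnmp$ the ratio $f=\EE[X^2]/\EE[X]^2$ picks up an extra normalising factor $e^{-\frac{1-p}{p}\alpha^2\binom{n}{r}}$ (this is the content of the paper's Lemma~\ref{lem_combi}), and it is precisely this factor that cancels the contribution of the matching-type overlaps when one passes from the full sum $S_H$ to the sum $S'_H$ over ``good'' overlaps (those with no isolated-edge components).

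Your plan to ``treat separately and absorb'' the single-edge and matching overlaps in $\Hnp$ is the gap. In $\Hnp$ the overlap $F$ equal to a matching of $t$ edges contributes, in your displayed sum, a term of order $\bigl(\alpha^2\binom{n}{r}p^{-1}\bigr)^{t}/t!$, so summing over $t$ yields a factor of roughly $\exp\bigl(\alpha^2\binom{n}{r}p^{-1}\bigr)$. Under the hypotheses~\eqref{eq_theocond}--\eqref{eq_npinfty} one only has $\alpha^2\binom{n}{r}p^{-1}=o(n^{1/2})$ (see Lemma~\ref{lem:estimates}), which is far from $o(1)$; so this factor is not $1+o(1)$ and cannot be absorbed. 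An admissibility condition of the kind you describe --- forbidding copies that meet atypically \emph{dense} patches --- does nothing here, because a matching is the \emph{sparsest} possible overlap on its vertex set. The remainder of your sketch (the counting lemma for connected sub-hypergraphs, the geometric series governed by $np^{\gamma(H)}\Delta^{-4}$) matches the paper's Lemmas~\ref{le_XFdelta} and~\ref{le_TH´´}, but without the $\Hnmp$ trick the argument does not close.
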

 We remark, that for $r=2$ this was already shown by Riordan in~\cite[Theorem~2.1]{Riordan} using the second moment argument. 
 In fact, the proof for hypergraphs will follow allong the lines of his original argument, but requires adaptations at various places. We provide the details below in Section~\ref{sec_proof} and in Section~\ref{sec_applications} we discuss its applications to some particular spanning hypergraphs. 

\subsection{Universality}

For a family $\cF$ of $r$-uniform hypergaphs we say that an $r$-uniform hypergraph $H$ is $\mathcal{F}$-universal if every hypergraph $F \in \mathcal{F}$ occurs as a copy in $H$. Let $\FnD$ denote the family of all $r$-uniform hypergraphs $F$ with maximum degree $\Delta(F) \le \Delta$ on $n$ vertices. 

In the graph case, the first result concerned almost spanning universality  due to Alon, Capalbo, Kohayakawa, R\"odl,  Ruci\'nski and Szemer\'edi~\cite{ACKRRS00} who showed that 
for $p=C (\ln n/n)^{1/\Delta}$ the random graph  $G(n,p)$ is $\cF^{(2)}((1-\eps)n,\Delta)$-universal a.a.s. 
 Then, Dellamonica, Kohayakawa, R{\"o}dl and Ruci{\'n}ski~\cite{DKRR15} proved for any given  $\Delta \ge 3$ that $G(n,p)$ is $\mathcal{F}^{(2)}(n,\Delta)$-universal a.a.s.\ 
provided that $p \ge C \left( \ln n / n \right)^{1/\Delta}$, where $C>0$ is some absolute constant.
Later, Kim and Lee~\cite{kim2014universality} dealt with the missing case $\Delta=2$.
Bringing the maximum density $d(H)=\max_{H^\prime \subseteq H} \left\{ 2 e(H) / v(H) \right\}$ of a graph into the statement, Ferber, Nenadov and Peter~\cite{ferber2013universality} showed that for the universality for all graphs with maximum degree $\Delta$ and maximum density $d$ the probability $p=\omega(\Delta^{12} n^{-1/(2d)} \ln^3 n)$ suffices. Thus, for sparser graphs with  $d < \Delta/2$ this improved~\cite{DKRR15} and an additional advantage is that $\Delta$ is allowed to be a function  of $n$.
Very recently, Conlon, Ferber, Nenadov and \v{S}kori\'{c}~\cite{conlon2015almost}, building on~\cite{ferber2013universality} proved that for every $\eps>0$, $\Delta\ge 3$ and 
$p = \omega(n^{-1/(\Delta-1)} \ln^5 n)$  the random graph $\Gnp$ is $\mathcal{F}^{(2)}((1-\eps)n,\Delta)$-universal a.a.s.\ improving the almost spanning result from~\cite{ACKRRS00}.

Our second result is on the universality of $\Hnp$ for the family  $\FnD$, where we show that a natural bound on $p\ge C (\ln n /n)^{1/\Delta}$ suffices. 

\begin{theorem}\label{theom_universality}
For every $r \ge 2$ and any integer $\Delta \ge 1$, there exists a constant $C>0$, such that for $ p=C \left( \ln n / n \right)^{1/\Delta}$ the random $r$-uniform hypergraph $\Hnp$ is $\FnD$-universal a.a.s.
\end{theorem}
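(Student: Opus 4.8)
The plan is to mimic the graph case of Dellamonica, Kohayakawa, R\"odl and Ruci\'nski~\cite{DKRR15}, but to reduce the hypergraph embedding problem to an auxiliary graph embedding problem in an appropriately defined \emph{host graph}. First I would reduce to embedding a single hypergraph $F\in\FnD$: by a standard union bound over the (at most $2^{O(\Delta n\ln n)}$) hypergraphs in $\FnD$ it suffices to show that the failure probability for a fixed $F$ is $o(2^{-\Delta n\ln n})$; since $C(\ln n/n)^{1/\Delta}$ is exactly the density used in~\cite{DKRR15} with room to spare in the constant, this exponentially small failure probability should be attainable. To handle a fixed $F$, I would construct from $\Hnp$ a host graph $G$ on the same vertex set $[n]$ by adding, for every potential edge-slot of $F$, appropriate cliques or ``gadget'' structure encoding the $r$-subsets present in $\Hnp$; more precisely, one wants that an embedding of $F$ into the hypergraph is equivalent (or reduces) to an embedding of a bounded-degree auxiliary graph $F'$ into a graph $G$ that, conditionally, looks like $G(n,q)$ with $q$ essentially $p^{1/\binom{r}{2}}$ or similar, so that the known spanning-universality theorem for $G(n,q)$ applies.

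Concretely, the cleanest route is probably the one hinted at by the abstract: given $F\in\FnD$, form the ``clique blow-up'' graph $F^{(2)}$ whose edges are all pairs contained in a common hyperedge of $F$; then $\Delta(F^{(2)})\le r\Delta$, and an embedding $\phi\colon V(F)\to[n]$ is a valid hypergraph embedding of $F$ into a hypergraph $H$ provided every hyperedge of $F$ is mapped to a hyperedge of $H$. The key step is therefore to design a probabilistic coupling: reveal $\Hnp$, and from it build a graph $G$ on $[n]$ in which one can find not just a copy of $F^{(2)}$ but a copy \emph{witnessing} that each hyperedge of $F$ lands inside an edge of $\Hnp$. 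One way: for each $r$-set $e\in\Hnp$ independently present with probability $p$, we know its $\binom{r}{2}$ pairs form a clique; we would like $G$ to contain exactly those cliques. Then the problem becomes: does a random ``clique hypergraph'' of this type contain a prescribed configuration? Since $r$ is constant, a union-bound/second-moment estimate (in the spirit of Theorem~\ref{theorem_main}, or a direct absorbing/matching argument) should let us embed $F$ hyperedge-by-hyperedge once $F^{(2)}$ has been embedded with enough flexibility, provided $p\ge C(\ln n/n)^{1/\Delta}$.

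The approach I would actually carry out in full detail is to run the nibble/switching machinery of~\cite{DKRR15} directly at the hypergraph level, combined with the rainbow-embedding viewpoint of Ferber, Nenadov and Peter~\cite{ferber2013universality}. That is: (i) partition $V(F)$ into $\Delta+1$ colour classes $V_1,\dots,V_{\Delta+1}$ using a proper colouring of $F^{(2)}$ (so no two vertices of the same class lie in a common hyperedge of $F$), and embed the classes one at a time; (ii) maintain, for each unembedded vertex $v$, a large set of ``candidate'' images consistent with the already-embedded part, using that $\Hnp$ has good pseudorandom/extension properties at scale $p$; (iii) use a matching (Hall's theorem or a greedy/online argument) to simultaneously assign images to all vertices of the current class, the point being that the relevant bipartite ``candidate graph'' is an expander a.a.s.\ because $p$ is just above $(\ln n/n)^{1/\Delta}$ --- the exponent $1/\Delta$ is precisely what guarantees that after fixing $\le\Delta$ hyperedges through a vertex, its candidate set has size $\Omega(n p^{\,?})\to\infty$ with room to spare. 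The key extension/pseudorandomness lemma is: for any set of at most $\Delta$ partial hyperedges (each an $(r-1)$-set through the target vertex), the number of common extensions in $\Hnp$ is concentrated around $n p^{\Delta}$; this is a routine Chernoff + union bound given the density.

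The main obstacle, I expect, is step (ii)--(iii): controlling the candidate sets under the \emph{dependencies} peculiar to hypergraphs. In the graph case a vertex's constraints are at most $\Delta$ edges, each shrinking the candidate set by a factor $p$; here a vertex $v$ of $F$ lies in at most $\Delta$ hyperedges, each of which, once its other $r-1$ vertices are embedded, constrains $v$ to the link of an $(r-1)$-set in $\Hnp$, and these links must be intersected. The delicate point is that several hyperedges through $v$ may share vertices, and the ordering of embedding must be chosen so that whenever we come to embed $v$, enough of each relevant link is already ``available'' and the intersection is still $\omega(\ln n)$ large with exponentially good probability (so the final union bound over $\FnD$ closes). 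Getting the colour classes / embedding order and the candidate-set bookkeeping to interact correctly --- i.e.\ proving the analogue of the main technical lemma of~\cite{DKRR15} with the hypergraph link structure in place of neighbourhoods --- is where essentially all the work lies; the rest is bookkeeping and standard concentration.
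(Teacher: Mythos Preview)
Your final strategy --- partition $V(F)$ into colour classes, embed class by class, and at each step find the embedding via Hall's theorem in an auxiliary bipartite graph whose edges encode the link constraints --- is indeed what the paper does. But two of your concrete choices are genuine obstacles, not just bookkeeping.

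First, the reduction to a single $F$ by a union bound over $\FnD$ does not work. There are $2^{\Theta(\Delta n\ln n)}$ members of $\FnD$, while the concentration inequalities available here (Chernoff on sums of at most $n$ indicators) yield failure probabilities of order at best $e^{-cn}$, never $e^{-cn\ln n}$; in particular \cite{DKRR15} does \emph{not} proceed this way either. Both there and in the present paper one instead isolates a deterministic pseudorandomness condition (Definition~\ref{def_good}) that (a) holds in $\Hnp$ a.a.s.\ and (b) implies containment of \emph{every} $F\in\FnD$ simultaneously. All union bounds in the actual proof are over polynomially many local configurations (choices of small sets $W$ or families $\cL$), not over $\FnD$.

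Second, a proper colouring of the shadow graph into $\Delta+1$ classes gives only $1$-independent sets, and this is too weak for the matching step. If $x_1,x_2$ lie in the same class but are at distance $2$ or $3$ in $F$, their constraint sets $L(x_1),L(x_2)$ --- the images of the already-embedded $(r-1)$-tuples in their links --- need not be vertex-disjoint, whereas the expansion properties~\eqref{prop_2} and~\eqref{prop_3} of Definition~\ref{def_good} require the family $\cL$ to consist of pairwise disjoint $k$-subsets of $\binom{V}{r-1}$ (this is what makes the relevant Chernoff indicators independent). The paper therefore colours the \emph{third power} of the shadow graph, obtaining $t=r^3\Delta^3$ classes that are $3$-independent in $F$ (Lemma~\ref{lem_partition}); this forces the neighbourhoods $N_F(x)$ for $x$ in a class to be pairwise disjoint and non-adjacent. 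There is a further device you do not anticipate: in the last round one needs a \emph{perfect} matching (since $|X_t|=|V_t^*|$), so Hall's condition must be verified from both sides. The paper arranges this by choosing $X_t$ to consist of vertices with a common \emph{profile} $(N_F(x),F[N_F(x)],\link_F(x))$, pre-embedding $X_0:=N_F(X_t)$ onto a prepared family of vertex-disjoint copies of that profile inside $V_0$ (property~\eqref{prop_1} of Definition~\ref{def_good}), and then using property~\eqref{prop_1} as an absorber to close the final step.
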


In the proof of Theorem~\ref{theom_universality} (see Section~\ref{sec:proof_universality}) we employ a strategy of Dellamonica, Kohayakawa, R{\"o}dl and Ruci{\'n}ski~\cite{DKRR15}, but a shortcut will be obtained by using similar notions of good properties that were introduced by Ferber, Nenadov and Peter~\cite{ferber2013universality}. 

Another natural problem concerns the \emph{existence} and \emph{explicit constructions} of graphs that are universal for some family of graphs. 
For an excellent survey on this problem see Alon~\cite{Alon10} and the references therein. 
First nearly optimal universal graphs for $\cF^{(2)}(n,\Delta)$ ($\Delta\ge 3$) with $O(n)$ vertices and $O(n^{2-2/\Delta}\ln^{1+8/\Delta}n)$ edges were given by Alon, Capalbo, Kohayakawa, R\"odl,  Ruci\'nski and Szemer\'edi~\cite{ACKRRS01}. 
It was also noted by the same authors in~\cite{ACKRRS00} that any such universal graph has to contain $\Omega(n^{2-2/\Delta})$ edges. 
As mentioned in \cite{alon2007sparse} in the case $\Delta=2$ the square of a Hamilton cycle is $\cF^{(2)}(n,\Delta)$-universal (and thus $2n$ edges are enough in this case).  
Subsequently, Alon and Capalbo gave better constructions of $\cF^{(2)}(n,\Delta)$-universal graphs ($\Delta\ge 3$) with $O(n)$ vertices and $O(n^{2-2/\Delta})$ edges~\cite{alon2007sparse} and another one with $n$ vertices and $O(n^{2-2/\Delta} \ln^{4/\Delta} n)$ edges~\cite{alon2008optimal}.
 We exploit their constructions to obtain sparse $\FnD$-universal hypergraphs.

\begin{proposition}
\label{prop:universality}
For every $r \ge 3$, any integer $\Delta \ge 2$ and  any $\cF^{(2)}(n,\Delta)$-universal graph $G$ there is an $r$-uniform hypergraph $H$ with $V(H)=V(G)$ and $|E(H)|\le |E(G)| n^{r-2}$, which is $\FnD$-universal.
\end{proposition}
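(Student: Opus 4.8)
The plan is to build $H$ by ``inflating'' every edge of $G$ to all $r$-sets containing it, and then, given $F\in\FnD$, to embed $F$ by first ``contracting'' each hyperedge of $F$ to a canonical pair of its vertices, feeding the resulting bounded-degree graph into the universality of $G$, and re-inflating.

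Concretely, fix a linear order on $V:=V(G)$, put $V(H):=V$, and set
\[
E(H):=\bigl\{\, S \in \binom{V}{r} \;:\; \{u,v\}\subseteq S \text{ for some } \{u,v\}\in E(G) \,\bigr\}.
\]
Each pair of vertices is contained in exactly $\binom{n-2}{r-2}$ sets of size $r$, so by the union bound $|E(H)|\le|E(G)|\binom{n-2}{r-2}\le|E(G)|\,n^{r-2}$, which is the asserted bound, and clearly $V(H)=V(G)$.

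Now take any $F\in\FnD$, so $v(F)=n$ and $\Delta(F)\le\Delta$. For every $e\in E(F)$ let $\pi(e)$ be the pair formed by the two smallest vertices of $e$ in the fixed order (well defined since $r\ge2$), and let $F'$ be the graph on $V(F)$ with edge set $\{\pi(e):e\in E(F)\}$. The crucial point is that $\pi(e)\subseteq e$, so every edge of $F'$ incident to a vertex $w$ arises from a hyperedge of $F$ through $w$; hence $\deg_{F'}(w)\le\deg_F(w)\le\Delta$ for all $w$, and therefore $F'\in\cF^{(2)}(n,\Delta)$ (it has $n$ vertices, some possibly isolated). Since $G$ is $\cF^{(2)}(n,\Delta)$-universal, there is an injective map $\phi\colon V(F')\to V(G)$ sending every edge of $F'$ to an edge of $G$. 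As $V(F')=V(F)$, the map $\phi$ is defined and injective on $V(F)$; and for each $e\in E(F)$ the $r$-set $\phi(e)$ contains the pair $\phi(\pi(e))\in E(G)$, so $\phi(e)\in E(H)$ by the definition of $E(H)$. Thus $\phi$ embeds $F$ into $H$, and $H$ is $\FnD$-universal.

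The only step that needs real care is verifying $\Delta(F')\le\Delta$; the rest is bookkeeping. It works precisely because the representative pair $\pi(e)$ is chosen inside $e$, so contracting hyperedges to these pairs cannot increase any vertex degree — whereas passing to the full $2$-shadow of $F$ would inflate degrees by a factor of up to $r-1$ and need not lie in $\cF^{(2)}(n,\Delta)$.
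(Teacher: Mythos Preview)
Your construction of $H$ is exactly the paper's $\cH_r(G)$, and your overall architecture---find a graph $F'$ on $V(F)$ with $\Delta(F')\le\Delta$ that hits $F$ (i.e.\ every hyperedge of $F$ contains an edge of $F'$), embed $F'$ into $G$ by universality, then observe that the same map sends $F$ into $\cH_r(G)$---is identical to the paper's Lemmas~\ref{lem:sigma} and~\ref{lem:Hruni}. The only substantive difference is how the hitting graph is produced: the paper argues existentially, showing $\sigma(F)\le\Delta$ by taking an edge-minimal hitting graph and deriving a contradiction from a high-degree vertex, whereas you construct $F'$ explicitly by selecting one canonical pair from each hyperedge. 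Your route is more direct and constructive; the paper's is more flexible in that it speaks about the parameter $\sigma$, which could in principle give sharper bounds for specific subfamilies.

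One small slip: you fix the linear order on $V(G)$ but then use it to pick the two smallest vertices of $e\in E(F)$, which lives in $V(F)$. The order should be on $V(F)$ (or on $[n]$ after identifying $V(F)$ with $[n]$); the embedding $\phi$ into $V(G)$ only comes afterwards. This is purely notational and does not affect the argument.
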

 
It will follow from Proposition~\ref{prop:universality} that 
 if $G$ is explicitly constructible then so is the hypergraph $H$. 
This proposition together with the results of Alon and Capalbo~\cite{alon2007sparse,alon2008optimal} yields for  $\Delta\ge 2$  constructions of $\FnD$-universal hypegraphs with $O(n)$ vertices and 
$O(n^{r-2/\Delta})$ edges and with $n$ vertices and $O(n^{r-2/\Delta} \ln^{4/\Delta}n)$ edges. Note that the case $\Delta=1$ is trivial. 

Our next proposition shows that, for $r\ge 4$ there exist even sparser $\FnD$-universal hypergraphs. These are obtained from appropriate universal random graphs from~\cite{conlon2015almost,DKRR15}.
\begin{proposition}\label{prop:shadow_univ}
 Let $r\ge 3$ and $\Delta\ge 2$ be integers. Then there exists an $\FnD$-universal hypergraph $H_1$ with $n$ vertices and $\Theta\left(n^{r-\frac{r}{2\Delta}}(\ln n)^{\frac{r}{2\Delta}}\right)$ edges. 
Moreover, for every $\eps>0$ there exists an $\FnD$-universal hypergraph $H_2$ with $(1+\eps)n$ vertices and 
$\omega\left(n^{r-\frac{\binom{r}{2}}{(r-1)\Delta-1}}(\ln n)^{5\binom{r}{2}}\right)$ edges.
\end{proposition}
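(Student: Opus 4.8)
The plan is to reduce both constructions to the random‑graph universality results quoted above, by means of a \emph{clique gadget}. Given an $r$-uniform hypergraph $F$ on $[n]$ with $\Delta(F)\le\Delta$, let its \emph{clique blow‑up} $\widehat F$ be the graph on $V(F)$ in which the vertex set of every hyperedge of $F$ is made into a clique; that is, $uv\in E(\widehat F)$ precisely when $u,v$ lie together in some hyperedge of $F$. Each vertex lies in at most $\Delta$ hyperedges, each contributing at most $r-1$ neighbours, so $\Delta(\widehat F)\le\Delta(r-1)$ and hence $\widehat F\in\cF^{(2)}(n,\Delta(r-1))$. Conversely, for a graph $G$ let $H(G)$ be the $r$-uniform hypergraph on $V(G)$ whose edges are exactly the $r$-cliques of $G$, i.e.\ $E(H(G))=\{S\in\binom{V(G)}{r}\colon G[S]\cong K_r\}$; in particular $|E(H(G))|$ equals the number of copies of $K_r$ in $G$. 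If $\phi$ embeds $\widehat F$ into $G$, then for every hyperedge $e=\{v_1,\dots,v_r\}$ of $F$ all $\binom r2$ pairs $\phi(v_i)\phi(v_j)$ are edges of $G$, so $\{\phi(v_1),\dots,\phi(v_r)\}$ is an $r$-clique of $G$, i.e.\ an edge of $H(G)$; thus $\phi$ is also an embedding of $F$ into $H(G)$. Consequently, \emph{if $G$ is $\cF^{(2)}(m,\Delta(r-1))$-universal for some $m\ge n$, then $H(G)$ is $\FnD$-universal} (padding $\widehat F$ by isolated vertices to size $m$ if needed).

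For $H_1$ I take $G=G(n,q)$ with $q=C(\ln n/n)^{1/(\Delta(r-1))}$ for a suitable absolute constant $C$. Since $r\ge3$ and $\Delta\ge2$ we have $\Delta(r-1)\ge4$, so the theorem of Dellamonica, Kohayakawa, R\"odl and Ruci\'nski~\cite{DKRR15} gives that $G$ is a.a.s.\ $\cF^{(2)}(n,\Delta(r-1))$-universal; moreover $q$ exceeds the $K_r$-threshold $n^{-2/(r-1)}$ by a polynomial factor (because $1/(\Delta(r-1))<2/(r-1)$), so the number of $r$-cliques of $G(n,q)$ is a.a.s.\ $(1+o(1))\binom nr q^{\binom r2}$. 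Fixing a graph $G$ with both properties and setting $H_1:=H(G)$, the reduction shows $H_1$ is $\FnD$-universal, and, using $\binom r2/(r-1)=r/2$,
\[
|E(H_1)|=\Theta\!\left(\binom nr q^{\binom r2}\right)=\Theta\!\left(n^{r}\Bigl(\tfrac{\ln n}{n}\Bigr)^{r/(2\Delta)}\right)=\Theta\!\left(n^{\,r-\frac r{2\Delta}}(\ln n)^{\frac r{2\Delta}}\right).
\]

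For $H_2$, given $\eps>0$ set $N=(1+\eps)n$ and fix $\eps'>0$ with $n\le(1-\eps')N$ for all large $n$. Take $G=G(N,q)$ where $q$ is any function satisfying $q=\omega\bigl(N^{-1/(\Delta(r-1)-1)}\ln^5N\bigr)$; we are free to choose $q$ as close to this lower bound as we wish. Since $\Delta(r-1)\ge3$, the almost‑spanning universality result of Conlon, Ferber, Nenadov and \v{S}kori\'{c}~\cite{conlon2015almost} shows that $G$ is a.a.s.\ $\cF^{(2)}((1-\eps')N,\Delta(r-1))$-universal, while Markov's inequality bounds its number of $r$-cliques by $O\bigl(\binom Nr q^{\binom r2}\bigr)$ a.a.s.; fixing such a $G$ and putting $H_2:=H(G)$ yields an $\FnD$-universal hypergraph on $(1+\eps)n$ vertices with
\[
|E(H_2)|=O\!\left(\binom Nr q^{\binom r2}\right)=\omega\!\left(N^{\,r-\binom r2/(\Delta(r-1)-1)}(\ln N)^{5\binom r2}\right)=\omega\!\left(n^{\,r-\frac{\binom r2}{(r-1)\Delta-1}}(\ln n)^{5\binom r2}\right),
\]
since $N=\Theta(n)$ and $(r-1)\Delta-1=\Delta(r-1)-1$; the hidden $\omega$-factor can be made arbitrarily slowly growing by taking $q$ close to the threshold.

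Once the clique gadget is in place the rest is essentially bookkeeping, and the only genuinely technical ingredient is the estimate on the number of copies of $K_r$ in $G(n,q)$, needed so that the random graph can be chosen to be \emph{simultaneously} universal and economical: for $H_1$ this is the standard concentration of strictly balanced subgraph counts above threshold, and for $H_2$ even the one‑sided Markov bound suffices. A small but essential point is that $\Delta(r-1)\ge3$ throughout (as $\Delta\ge2$ and $r\ge3$), so that the cited graph‑universality theorems, which exclude maximum degree $2$, genuinely apply.
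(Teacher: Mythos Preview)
Your proposal is correct and takes essentially the same approach as the paper: the paper's Lemma~\ref{lem:shadow_univ} is exactly your clique-gadget reduction (your $\widehat F$ is the paper's shadow graph $F'$ and your $H(G)$ is the paper's $\cK_r(G)$), and both constructions then plug in $G(n,q)$ with $q=C(\ln n/n)^{1/((r-1)\Delta)}$ via~\cite{DKRR15} for $H_1$ and $G((1+\eps)n,q)$ with $q=\omega(n^{-1/((r-1)\Delta-1)}\ln^5 n)$ via~\cite{conlon2015almost} for $H_2$, counting $K_r$'s to obtain the stated edge bounds. Your explicit check that $(r-1)\Delta\ge 3$ so that the cited graph-universality theorems apply is a detail the paper leaves implicit.
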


 These hypergraphs are thus sparser than the random universal hypergraph from 
Theorem~\ref{theom_universality}. On the other hand, any $\FnD$-universal hypergraph has to contain  $\Omega(n^{r-r/\Delta})$ edges and thus the exponent in its density is off by at most  the factor of $2$. 
We discuss Propositions~\ref{prop:universality} and~\ref{prop:shadow_univ} in Section~\ref{sec:construction}.
 
\subsection{Notation}

Let $H=(V,E)$ be an $r$-uniform hypergraph. The hypergraph induced by a subset of the vertices $W \subseteq V$ in $H$ is denoted by $H[W]:=\left(W,E(H)\cap\binom{W}{r}\right)$. The shadow graph $H^\prime$ is obtained from $H$ by replacing every edge $e\in E(H)$ by all possible $\binom{r}{2}$ subsets of cardinality two (we ignore multiple edges). By $\Krn$ we denote the complete $r$-uniform hypergraph $\left([n],\binom{n}{r}\right)$. 

An alternating sequence of vertices and edges $v_1,e_1,v_2,e_2,\dots,v_{t},e_t,v_{t+1}$ is called a path of length $t$ from $v_1$ to $v_{t+1}$ if $v_i$, $v_{i+1}\in e_{i}$ for all $i\in[t]$. 
 If there is a path from $u$ to $v$, then we say that $u$ and $v$ are connected. This defines an equivalence relation on $V$. Similarly to the graph case, we say that a hypergraph $H$ is connected if there is a path between any two vertices of $H$. A component in an $r$-uniform hypergraph is a maximally connected subhypergraph. 
The distance between two vertices $u$ and $v$ in $H$ ist the minimal length over all paths from $u$ to $v$, and if they are in different components then we set it to $\infty$.

The neighbourhood $N_{H}(v)$ of a vertex $v$ is the set of vertices which are contained in an edge together with $v$
\begin{align*}
 N_{H}(v)=\{ w \in V \setminus \{ v \} : \exists \, e \in E \text{ s.t.\ } \{ w, v \} \subseteq e \}.
\end{align*}
For a subset of the vertices $W \subseteq V$, the neighbourhood in $H$ is $N_H (W) = \bigcup_{w \in W} N_H(w)$.
The set $W$  is called $t$-independent in a hypergraph $H$, if the distance between $v\in W$ and $w \in W$ in $H$  is at least $t+1$.
A $1$-independent set is independent in the usual sense.

To simplify readability, we will omit in the calculations  floor and ceiling signs whenever they  are not crucial in the arguments.

\section{Proof of Theorem~\ref{theorem_main}}\label{sec_proof}

The overall proof strategy of Theorem~\ref{theorem_main} is the same as Riordan's in~\cite{Riordan}, which is 
an elegant second moment argument. In fact, a large part of the proof proceeds along the same lines and  we are thus  going to use the same notation. We will also provide reference to the relevant part of~\cite{Riordan}, especially  for lemmas that hold verbatim or by a  straightforward modification for hypergraphs, but we also include some of these for the sake of readability. Still, some of the steps require more effort to be generalized (in particular Lemma~\ref{le_SH'} below) and we provide full details for them. We try to be brief anyway.

The proof deals instead of $\Hnp$ with the related model $\Hnmp$, which is the probability space of all labelled $r$-uniform hypergraphs with the vertex set $[n]$ and exactly $p\binom{n}{r}$ edges with a uniform measure. Thus, for $r=2$ this is  the standard model $G(n,m)$. A corresponding statement in the model $\Hnp$ is  then obtained from $\Hnmp$ by a standard argument, i.e.\ conditioning on the number of edges in $\Hnp$.

One considers the random variable $X$ which counts  copies of $H$ in  $\Hnmp$ and  analyzes the quantity $f:=\EE(X^2)/(\EE X)^2$. It is enough to show that $f=1+o(1)$, since then one infers by Chebyshev's inequality:
\begin{equation}\label{eq:chebyshev}
 \PP[X=0]\le \PP[\left|X-\EE X\right|\ge\EE X]\le \frac{\Var(X)}{\EE(X)^2}=f-1=o(1).
\end{equation}

Before we give more details, let us briefly state the steps that are geared towards the estimation of $f$ as done in~\cite{Riordan}, since this is the path we are going to pursue as well:
\begin{enumerate}
\item pondering $f$, it is shown that $f\le (1+o(1))e^{- \frac{1-p}{p} \alpha^2 \edgeN} S_H$, 
where $S_H$ is a sum that depends on all subhypergraphs of $H$, which will be introduced below;
\item then it is shown that $S_H \le (1+o(1)) e^{\frac{1-p}{p} \alpha^2 \edgeN} S^\prime_H$, 
where $S^\prime_H$ runs only over certain good subhypergraphs of $H$ -- 
 this step requires most adaptation and we provide full details in Lemma~\ref{le_SH'} below;
\item one can further simplify $S^\prime_H$ and bound it above by another quantity $T^\prime_H$ -- 
this is done in  Lemma~\ref{le_XFdelta} and its proof is sketched in the Appendix;
\item in the penultimate step, $T^\prime_H$ is bounded by $e^{T_H^{\prime \prime}}$, where $T_H^{\prime \prime}$ is the sum over all good connected hypergraphs;
\item finally, it is shown $T_H^{\prime \prime}=o(1)$ (we sketch a proof in Lemma~\ref{le_TH´´} that can be found in the Appendix) and combining the estimates, the desired bound on $f$ follows:

\centering{
\begin{multline*}
f\le (1+o(1))e^{- \frac{1-p}{p} \alpha^2 \edgeN} S_H\le 
(1+o(1)) e^{- \frac{1-p}{p} \alpha^2 \edgeN} e^{\frac{1-p}{p} \alpha^2 \edgeN} S^\prime_H\le\\
(1+o(1))T_H^{\prime}\le (1+o(1))e^{T_H^{\prime \prime}}\le (1+o(1)) e^{o(1)}=1+o(1).
\end{multline*}}
\end{enumerate}

Before we proceed, let us collect some useful estimates that involve $\alpha$ and $p$ for future reference.
\begin{lemma}\label{lem:estimates}
Suppose that the conditions~\eqref{eq_theocond} and~\eqref{eq_npinfty} hold. Then, we have
\begin{align*}
&n^{\frac{3-2r}{2}} p^{-1} \Delta^{4r-6}\rightarrow 0\quad\text{and}\quad \Delta=o(n^{1/4}),\\
 &\alpha^3 \edgeN p^{-2}=o(1),\\
&\alpha p^{-1}\Delta=o(n^{-1/2}), \,\,
 p^{-1}\Delta^2 n^{2-r}=o(n^{1/2})\,\,\, \text{and}\quad 
 p^{-1}\alpha^2\binom{n}{r}=o(n^{1/2}). 
 \end{align*}
\end{lemma}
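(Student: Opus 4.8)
The plan is to derive all the stated estimates from the two hypotheses~\eqref{eq_theocond} and~\eqref{eq_npinfty} purely by elementary manipulation, keeping in mind the two ``driving'' facts: $np^{\gamma(H)}\Delta^{-4}\to\infty$ together with $p\binom nr\to\infty$, and the crude but crucial bound $\gamma(H)\ge 1$ that follows from $\alpha\binom nr>n/r$ (since then $H$ has more than $n/r$ edges, forcing $e_H(n)/(n-2)\ge \frac{n/r}{n-2}\to 1/r$; more to the point, picking a suitable subhypergraph one gets $\gamma(H)\ge\frac{e(H)}{n-2}>\frac{n/r}{n-2}$, which is $\ge 1$ for $r=2$ and in general we will want $\gamma(H)\ge 1/r$ or a similar lower bound — I will pin down exactly what is needed as I go). The first thing I would establish is the handful of lower bounds on $\gamma(H)$ and upper bounds on $\Delta$ in terms of $n$ and $p$ that make the rest go through; in particular from $np^{\gamma(H)}\Delta^{-4}\to\infty$ and $p\le 1$ we get $n\ge \Delta^4 p^{-\gamma(H)}(1+o(1))\ge \Delta^4(1+o(1))$, hence $\Delta=o(n^{1/4})$, which is the second displayed claim.

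Next I would handle the first displayed line. Having $\Delta=o(n^{1/4})$, the claim $n^{(3-2r)/2}p^{-1}\Delta^{4r-6}\to 0$ is equivalent (taking $r$ fixed) to $p^{-1}\Delta^{4r-6}=o(n^{(2r-3)/2})$; since $\gamma(H)$ is bounded below by a positive constant depending only on $r$ (this is where $\alpha\binom nr>n/r$ enters, giving $\gamma(H)\ge \frac{\alpha\binom nr}{n-2}>\frac1r\cdot\frac{n}{n-2}>\frac1r$), we have from~\eqref{eq_npinfty} that $p^{-1}\le p^{-1/\gamma(H)}\cdot p^{1/\gamma(H)-1}$ — better to write it as: $p^{\gamma(H)}\ge \Delta^4/(n\cdot o(1))^{-1}$, i.e.\ $p^{-\gamma(H)}=o(n\Delta^{-4})$, so $p^{-1}=o\bigl((n\Delta^{-4})^{1/\gamma(H)}\bigr)=o\bigl((n\Delta^{-4})^{r}\bigr)$ when $\gamma(H)\ge 1/r$. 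Then $p^{-1}\Delta^{4r-6}=o(n^r\Delta^{4r-6-4r})=o(n^r\Delta^{-6})$, and since $\Delta\ge 1$ this is $o(n^r)$, which is stronger than $o(n^{(2r-3)/2})$ only when $r\le 3$; for larger $r$ I will need the sharper bookkeeping $o(n^r\Delta^{-6})$ combined with a lower bound on $\Delta$, OR — more likely the intended route — I should not be so wasteful and instead track that the paper surely only needs these estimates for the ranges of $p$ that actually arise, so I would revisit and use $p\binom nr\to\infty$ (giving $p=\omega(n^{-r})$ up to constants, hence $p^{-1}=o(n^r)$ trivially but also $p^{-1}$ could be as large as almost $n^r/r!$) together with~\eqref{eq_npinfty} more carefully. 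The honest statement is: each of these is a direct consequence of $(n\Delta^{-4})^{1/\gamma(H)}$ being the controlling scale of $p^{-1}$, and I would verify them one by one by substituting $p^{-1}\le C(n\Delta^{-4})^{1/\gamma(H)}$ and $\gamma(H)\in[1/r,\ \text{something}]$.

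For the remaining three lines I would proceed similarly: the bound $\alpha^3\binom nr p^{-2}=o(1)$ uses $\alpha\le 1$, so it suffices that $\alpha^3\binom nr\le \binom nr^{?}$ — more precisely $\alpha^3\binom nr\le \alpha\binom nr\cdot\alpha^2\le \binom nr$, so we need $p^{-2}=o\bigl(1/\binom nr\bigr)$, i.e.\ $p^2\binom nr\to\infty$, which follows from $p\binom nr\to\infty$ together with $p\binom nr\le p\binom nr$... no: $p^2\binom nr=(p\binom nr)\cdot p\ge$ nothing useful if $p\to 0$. So here I genuinely need $\gamma(H)$: since $e(H)=\alpha\binom nr$ is itself one of the values defining $\gamma(H)$ (with $v=n$), we have $\alpha\binom nr\le \gamma(H)(n-2)\le \gamma(H)\,n$; then $\alpha^3\binom nr\le \alpha^2\gamma(H)n$ and $p^{-2}\le C(n\Delta^{-4})^{2/\gamma(H)}$, and one checks the product is $o(1)$ using $\alpha\le 1$, $\Delta\ge1$, $\gamma(H)\ge 1/r$, and $\gamma(H)n\le \alpha\binom nr\cdot$const$\le$const$\cdot n^r$. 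The three estimates in the last line — $\alpha p^{-1}\Delta=o(n^{-1/2})$, $p^{-1}\Delta^2 n^{2-r}=o(n^{1/2})$, and $p^{-1}\alpha^2\binom nr=o(n^{1/2})$ — are again all of the form ``$p^{-1}$ times a polynomial in $n,\Delta,\alpha$ is small'', and each reduces after substituting the controlling scale for $p^{-1}$ to an inequality among the exponents that holds because $\gamma(H)$ is large enough relative to the degree of the polynomial (the middle one needs $\gamma(H)$ close to $r-2$-ish, which is forced when $\Delta$ is bounded and $e(H)$ is large; the key input is that a spanning $H$ with $\alpha\binom nr$ edges and max degree $\Delta$ has $\gamma(H)\ge \frac{e_H(v)}{v-2}$ for the optimal $v$, and standard convexity/averaging shows $\gamma(H)\gtrsim \alpha n^{r-1}/\text{const}$ in the dense regime). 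The main obstacle, then, is not any single computation but rather extracting the \emph{right} lower bound on $\gamma(H)$ from the structural hypotheses $\alpha\binom nr>n/r$ and $\Delta(H)=\Delta$ — one needs that $\gamma(H)$ is at least of order $\max\{1/r,\ \alpha n^{r-1}\}$ up to constants — and then doing the exponent arithmetic for each of the seven inequalities; I would state and prove that $\gamma$-lower-bound as a short preliminary sublemma and then dispatch the seven estimates in a single paragraph of routine substitution, which is why the paper presumably presents Lemma~\ref{lem:estimates} without proof or with only a terse one.
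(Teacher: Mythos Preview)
Your proposal identifies the overall shape of the argument correctly --- everything reduces to a lower bound on $\gamma(H)$ and then routine substitution --- but it never finds the \emph{right} lower bound, and without it the arithmetic genuinely does not close (as you yourself notice when $\gamma(H)\ge 1/r$ gives only $p^{-1}\Delta^{4r-6}=o(n^r\Delta^{-6})$, which is useless for large $r$). The bound you want is
\[
\gamma(H)\ \ge\ \frac{2}{2r-3}.
\]
This comes not from $v=n$ but from a small $v$: since $e(H)>n/r$, the sum of vertex degrees exceeds $n$, so some vertex lies in two edges; these two edges together span at most $2r-1$ vertices, hence $e_H(2r-1)\ge 2$ and $\gamma(H)\ge 2/(2r-3)$. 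Feeding this into~\eqref{eq_npinfty} gives $p^{-1}=o\bigl((n\Delta^{-4})^{(2r-3)/2}\bigr)$, which is \emph{exactly} the scale that makes $n^{(3-2r)/2}p^{-1}\Delta^{4r-6}\to 0$ and $p^{-1}\Delta^2 n^{2-r}=o(n^{1/2})$ drop out in one line each.

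The second missing ingredient is the bound on $\alpha$ coming from the degree constraint: $\alpha\binom{n}{r}=e(H)\le \Delta n/r$, i.e.\ $\alpha\le \Delta\binom{n-1}{r-1}^{-1}$. You never invoke this (you use only $\alpha\le 1$), and without it the estimates involving $\alpha$ cannot be obtained --- for instance $\alpha^3\binom{n}{r}p^{-2}=o(1)$ is simply false if one only assumes $\alpha\le 1$ and $p\binom nr\to\infty$. With both inputs in hand, each of the five remaining claims is a one-line substitution; the ``preliminary sublemma'' you propose about $\gamma(H)\gtrsim \alpha n^{r-1}$ is neither needed nor true in the relevant regime.
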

\begin{proof}
Observe first that $\alpha\binom{n}{r}>n/r$ implies $\gamma(H)\ge \frac{2}{2r-3}$. Since $p\le 1$, it follows from~\eqref{eq_npinfty} that $\Delta=o(n^{1/4})$, and rearranging yields with $\gamma(H)\ge \frac{2}{2r-3}$ that  $n^{\frac{3-2r}{2}} p^{-1} \Delta^{4r-6}\rightarrow 0$. 

Now we notice immediately that  $p=\omega\left((\Delta^4/n)^{\frac{2r-3}{2}}\right)$.
 Since $\alpha \edgeN \le \Delta n/r$ it follows $\alpha\le \Delta \binom{n-1}{r-1}^{-1}$. The combination of the two estimates yields $\alpha^3 \edgeN p^{-2}=o(1)$.
 
To obtain the remaining estimates, we combine the lower bound on $p=\omega\left((\Delta^4/n)^{\frac{2r-3}{2}}\right)$ with  $\alpha\le \Delta \binom{n-1}{r-1}^{-1}$, thus obtaining 
$
\alpha p^{-1}\Delta\le p^{-1}\Delta^2\binom{n-1}{r-1}^{-1}=o(n^{-1/2}) 
$
and 
$
p^{-1}\alpha^2\binom{n}{r}=o(n^{1/2}).
$
\end{proof}

The proof starts by writing $f$ as the sum over all $2^{e(H)}$ subhypergraphs $F$ of $H$ that  involve $X_F(H)$ and $X_F(\Krn)$, which are the number of subhypergraphs of $H$ (resp.\ of $\Krn$) that are isomorphic to $F$.
 The following lemma is  a consequence of Lemmas~3.1,~3.2 and~4.1 from~\cite{Riordan} (these involve only binomial coefficients and thus can be applied verbatim to hypergraphs). 
\begin{lemma}
\label{lem_combi}
 Suppose that $\alpha \edgeN , p\edgeN \rightarrow \infty$ and that 
 $\alpha^3 \edgeN p^{-2} \rightarrow 0$ (as $n$ tends to infinity). Then with $c = \frac{1-p}{p-2 \alpha}$ we get
\begin{align}
\label{eq_fboundc}
 f \le (1+o(1)) e^{-\frac{1-p}{p} \alpha^2 \edgeN} \sum_{F \subseteq H} c^{e(F)} \frac{X_F(H)}{X_F(\Krn)}. 
\end{align}
\end{lemma}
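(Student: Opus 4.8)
The plan is to run Riordan's second--moment computation for $f=\EE(X^2)/(\EE X)^2$ in the auxiliary model $\cH(n,m)$ with $m:=p\edgeN$; write $N:=\edgeN$ throughout. The only probabilistic input is that, for a fixed set $S$ of $s$ edges of $\Krn$,
\[
\PP_{\cH(n,m)}\bigl[\,S\subseteq\cH\,\bigr]=\binom{N-s}{m-s}\Big/\binom{N}{m}=:P_s,
\]
a number depending on $s$ alone. Hence $\EE X=X_H(\Krn)\,P_{e(H)}$, and since $G_1\cup G_2$ has $2e(H)-|E(G_1)\cap E(G_2)|$ edges,
\[
\EE(X^2)=\sum_{(G_1,G_2)}P_{\,2e(H)-|E(G_1)\cap E(G_2)|}=X_H(\Krn)\sum_{G_2}P_{\,2e(H)-|E(G_1)\cap E(G_2)|},
\]
the second equality by vertex--transitivity of $\Krn$ (fix an arbitrary copy $G_1$ and sum over all copies $G_2$). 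Dividing by $(\EE X)^2=X_H(\Krn)^2P_{e(H)}^2$ gives $f=\dfrac{1}{X_H(\Krn)P_{e(H)}^2}\displaystyle\sum_{G_2}P_{\,2e(H)-|E(G_1)\cap E(G_2)|}$.

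\textbf{Grouping by the overlap.} Fix an isomorphism $H\cong G_1$. Each copy $G_2$ singles out, as the pre-image of $E(G_1)\cap E(G_2)$ under this isomorphism, one of the $2^{e(H)}$ spanning subhypergraphs $F\subseteq H$. A double count of the pairs (copy of $H$ in $\Krn$, sub-copy of $F$ inside it) --- equivalently of permutations of $[n]$ mapping $E(F)$ into $E(H)$ --- shows that the number of copies of $H$ in $\Krn$ containing one prescribed copy of $F$ equals $X_H(\Krn)\,X_F(H)/X_F(\Krn)$. Since the number of $G_2$ realising a prescribed $F$ as their overlap with $G_1$ is at most the number of copies of $H$ containing the corresponding copy of $F$, grouping the $G_2$ and summing over $F$ yields the clean bound
\[
f\ \le\ \sum_{F\subseteq H}\frac{X_F(H)}{X_F(\Krn)}\cdot\frac{P_{\,2e(H)-e(F)}}{P_{e(H)}^2}.
\]

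\textbf{The binomial massage.} What remains is to replace, inside the sum, each ratio $P_{2e(H)-e(F)}/P_{e(H)}^2$ by $(1+o(1))\,e^{-\frac{1-p}{p}\alpha^2\edgeN}\,c^{e(F)}$ with $c=\frac{1-p}{p-2\alpha}$, which gives precisely \eqref{eq_fboundc} (step~(1) of the outline). This is a purely arithmetic step on ratios of binomial coefficients: telescoping $P_s/P_{s+1}=(N-s)/(m-s)$ and expanding logarithms, one isolates the common factor $e^{-\frac{1-p}{p}\alpha^2\edgeN}=(1+o(1))\,P_{2e(H)}/P_{e(H)}^2$ together with the stated per--edge constant $c$; the cumulative error of the expansion is of order $\alpha^3\edgeN p^{-2}$, which tends to $0$ by the third hypothesis, while $p\edgeN\to\infty$ (so $m\to\infty$) and $\alpha\edgeN\to\infty$ are what put the remaining quantities in the regime where this expansion is valid. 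These are Lemmas~3.1,~3.2 and~4.1 of~\cite{Riordan}; every manipulation involves only binomial coefficients and the counting functions $X_F(\cdot)$, none of which refers to the uniformity $r$, so the graph proof transfers verbatim.

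\textbf{Expected obstacle.} For this lemma there is essentially no obstacle beyond careful bookkeeping: the genuine difficulties of the overall argument are postponed to the later steps, the analysis of $S_H,\,S_H',\,T_H'$ and $T_H''$. The one thing one must check here is that the three conditions under which Riordan's Lemmas~3.1,~3.2 and~4.1 are stated coincide with the hypotheses of the present lemma --- namely $\alpha\edgeN\to\infty$, $p\edgeN\to\infty$ and $\alpha^3\edgeN p^{-2}\to 0$ --- the last of which holds by Lemma~\ref{lem:estimates} whenever \eqref{eq_theocond} and \eqref{eq_npinfty} do. If pressed to name a delicate point, it is keeping the accumulated error in the logarithmic expansion, of size comparable to $\alpha^3\edgeN p^{-2}$, negligible, which is exactly what that third assumption is imposed for.
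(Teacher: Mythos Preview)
Your proposal is correct and matches the paper's own treatment: the paper does not give a standalone proof of this lemma but simply records that it is a consequence of Lemmas~3.1,~3.2 and~4.1 of~\cite{Riordan}, which ``involve only binomial coefficients and thus can be applied verbatim to hypergraphs.'' You have reproduced precisely this route---the second-moment setup in $\cH(n,m)$, the overlap grouping via the identity $X_H(\Krn)X_F(H)/X_F(\Krn)$, and the binomial massage controlled by $\alpha^3\edgeN p^{-2}\to 0$---and in fact spell out more of the intermediate structure than the paper does.
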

The  third condition above ($\alpha^3 \edgeN p^{-2} \rightarrow 0$) holds by Lemma~\ref{lem:estimates}. 

Notice  that every component in an $r$-uniform hypergraph has either one vertex (isolated vertex) or at least $r$ vertices. 
We define the function $r(F):=n-k_1(F)-(r-1)k_r(F)$ where $n$ is the number of vertices in $F$, $k_1(F)$ is the number of isolated vertices in $F$ and $k_r(F)$ is the number of components in $F$, which are not isolated vertices. As in~\cite{Riordan} one introduces the sum
\begin{align}
\label{eq_defSH}
S_H= \sum_{F \subseteq H} (p^{-1}-1)^{e(F)} (1+n^{-\frac{1}{2}})^{r(F)} \frac{X_F(H)}{X_F(\Krn)}.
\end{align}
This allows us to get the bound of the form (this is Lemma~4.2 in~\cite{Riordan})
\[
f\le (1+o(1))e^{- \frac{1-p}{p} \alpha^2 \edgeN} S_H,
\]
 which follows by estimating $\ln\left(\frac{p}{p-2\alpha}\right)\le \frac{3\alpha}{p}$, $e(F)\le \Delta r(F)$ and the use of 
 $ \alpha p^{-1}\Delta=o(n^{-1/2})$ (which follows from Lemma~\ref{lem:estimates}).

Next one would like to estimate $S_H$ by the following sum
\begin{align}
\label{eq_defSH'}
S^\prime_H= \sum_{F \subseteq H}^\prime (p^{-1}-1)^{e(F)} 2^{r(F)} \frac{X_F(H)}{X_F(K_n^{(r)})},
\end{align}
where $\sum^\prime$ is the sum over subhypergraphs $F\subseteq H$ such that none of the components of $F$  consists of a single (isolated) edge (such hypergraphs are referred to as \emph{good} in~\cite{Riordan}). 

The following lemma has the same conclusion as Lemma~4.3 from~\cite{Riordan}. In the case of $r$-uniform hypergraphs ($r\ge 3$) one needs to be more careful and the estimates are somewhat different from those in~\cite{Riordan}.  Therefore we provide its full proof below.
\begin{lemma}
\label{le_SH'}
If $H$ is any $r$-uniform hypergraph with maximum degree $\Delta$ and the conditions~\eqref{eq_theocond} and~\eqref{eq_npinfty} hold, then  
\begin{align*}
S_H \le (1+o(1)) e^{\frac{1-p}{p} \alpha^2 \edgeN} S^\prime_H.
\end{align*}
\end{lemma}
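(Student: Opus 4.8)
The plan is to follow Riordan's strategy from the proof of Lemma~4.3 in~\cite{Riordan}, replacing each subhypergraph $F\subseteq H$ by a \emph{good} subhypergraph obtained by deleting the isolated-edge components, and then controlling the error incurred by these deletions. Concretely, for $F\subseteq H$ let $i(F)$ denote the number of components of $F$ that are single edges, and let $F^\circ\subseteq F$ be the good subhypergraph obtained by removing all such components. The map $F\mapsto F^\circ$ is many-to-one, and the fibre over a fixed good $F'$ consists of all ways of adding a set of pairwise vertex-disjoint edges of $H$, disjoint also from $V(F')$, as new components. The first step is therefore to rewrite $S_H$ by grouping the summands according to $F^\circ$, so that $S_H = \sum'_{F'\subseteq H} (p^{-1}-1)^{e(F')} (1+n^{-1/2})^{r(F')}\frac{X_{F'}(H)}{X_{F'}(\Krn)}\cdot W(F')$, where $W(F')$ is a weighted sum over all admissible families of disjoint isolated edges that can be adjoined to $F'$; here one uses that $r(F) = r(F') + r(\text{added part})$ and that the ratios $X_F(H)/X_F(\Krn)$ factor (up to controlled error) over components, exactly as in~\cite{Riordan}.

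The second step is to estimate $W(F')$ from above, uniformly over good $F'$, by $(1+o(1))\,e^{\frac{1-p}{p}\alpha^2\edgeN}$ times the corresponding correction factor $2^{r(F')}/(1+n^{-1/2})^{r(F')}$ that upgrades the base $(1+n^{-1/2})$ in $S_H$ to the base $2$ in $S'_H$. The dominant contribution to $W(F')$ comes from adjoining $j$ disjoint edges: the number of such choices is at most $\binom{e(H)}{j} \le (\alpha\edgeN)^j/j!$ (and this is essentially tight when $F'$ is small and $H$ has $\approx\alpha\edgeN$ edges spread out), each contributes a factor $(p^{-1}-1)(1+n^{-1/2})^{r}$ coming from the weight of an isolated-edge component with $r$ vertices (since such a component has $r$ vertices, one component, $0$ isolated vertices, so it contributes $r(F)$-increment equal to $r-(r-1) = 1$; wait — more precisely each isolated edge increases $r(F)$ by $r - 0 - (r-1) = 1$), and the ratio $X_{\text{edge}}(H)/X_{\text{edge}}(\Krn)$ is $\alpha$. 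Summing the resulting series gives $W(F') \le (1+o(1))\exp\!\big((p^{-1}-1)(1+n^{-1/2})\,\alpha \cdot \alpha\edgeN\big) = (1+o(1))\exp\!\big(\tfrac{1-p}{p}\alpha^2\edgeN\big)$, modulo lower-order terms. The passage from base $1+n^{-1/2}$ to base $2$ is absorbed because $r(F')\le e(F')/1 \le \Delta r(F')$... more carefully, one bounds $2^{r(F')} \ge (1+n^{-1/2})^{r(F')}$ trivially and checks that replacing the small base by $2$ only \emph{increases} the right-hand side, so it suffices to prove $S_H \le (1+o(1))e^{\frac{1-p}{p}\alpha^2\edgeN}\widetilde S_H$ where $\widetilde S_H$ has base $2$; since $\widetilde S_H = S'_H$ this is exactly the claim.

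The third step is to make rigorous the factorization of $X_F(H)/X_F(\Krn)$ over components and the accounting of vertices used by the adjoined edges, which is where the $r\ge 3$ case genuinely differs from Riordan's graph argument: an isolated edge now occupies $r$ vertices rather than $2$, so the combinatorial bookkeeping of how many disjoint edges of $H$ fit alongside $F'$, and the comparison of $X_F(\Krn)$ for $F$ versus $F'$, must be redone. I expect this to be the main obstacle. The estimates needed are precisely those collected in Lemma~\ref{lem:estimates} — in particular $p^{-1}\alpha^2\binom{n}{r} = o(n^{1/2})$, $\alpha p^{-1}\Delta = o(n^{-1/2})$, and $\alpha^3\edgeN p^{-2} = o(1)$ — which guarantee that the tails of the exponential series are negligible, that the corrections to the component-factorization of the $X_F$ ratios (of relative size $O(v(F)^2/n^{r})$ or so) accumulate to only $1+o(1)$, and that the base-change from $1+n^{-1/2}$ to $2$ costs nothing asymptotically. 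Once these are in place, assembling the three steps yields $S_H \le (1+o(1))e^{\frac{1-p}{p}\alpha^2\edgeN}S'_H$, as desired.
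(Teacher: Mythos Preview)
Your overall strategy matches the paper's: decompose $S_H$ over good subhypergraphs $F'$ and control the contribution of the isolated edges that were stripped off. The genuine gap is in your second step, where you assert that $X_F(H)/X_F(\Krn)$ ``factors (up to controlled error) over components'' and that each isolated edge contributes a factor $\alpha$ to this ratio. This is only approximately true when the number $v$ of isolated vertices of $F'$ is close to $n$; it fails badly otherwise. Concretely, if $F'$ has $v=r$ isolated vertices, then adding one isolated edge gives $X_{F_1}(\Krn)=X_{F'}(\Krn)\cdot\binom{r}{r}=X_{F'}(\Krn)$ and $X_{F_1}(H)\le X_{F'}(H)$, so $\rho_1/\rho_0\le 1$ rather than $\alpha$, and your weight satisfies $W(F')\ge (p^{-1}-1)$, which is polynomial in $n$. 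Meanwhile $e^{\frac{1-p}{p}\alpha^2\edgeN}$ can be $O(1)$ (e.g.\ for tight Hamilton cycles with $r=3$, $\alpha\sim n^{-2}$ and $p^{-1}\sim n$ give exponent $\sim 6$), so the uniform bound $W(F')\le (1+o(1))e^{\frac{1-p}{p}\alpha^2\edgeN}$ you claim is simply false.

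What rescues the lemma --- and what your proposal dismisses as a triviality --- is precisely the base change from $1+n^{-1/2}$ to $2$. Writing the ratio at the level of isomorphism classes as the paper does, the target $S'[F']$ carries an extra factor $(2/(1+n^{-1/2}))^{r(F')}\sim 2^{r(F')}\ge 2^{(n-v)/r}$, which is exponentially large in $n-v$. This is not ``absorbed''; it is the entire reason the inequality holds when $v$ is bounded away from $n$. The paper's proof therefore runs a four-range case analysis on $v$: for $v$ close to $n$ one uses the near-factorisation $\beta_w\approx\alpha^2\edgeN$ to recover the exponential, while for smaller $v$ one uses the cruder bound $\beta_w\le\Delta^2 r^{r-2}/w^{r-2}$ together with the $2^{-r(F')}$ gain to show $S[F']/S'[F']=o(1)$. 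Your proposal contains neither the correct $\beta_w$-bounds for small $v$ nor any use of the exponential gain from the base change, and without them the argument does not close.
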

\begin{proof}
Let $F$ be some hypergraph from the sum $\sum^\prime$ in $S'_H$ (such $F$  we call good). Thus, $F$ is  an $r$-uniform  hypergraph with $v$ isolated vertices and no isolated edges.  We define $S^\prime [F]$ to be the contribution to $\SHp$ that comes from the isomorphism class of a good hypergraph $F\subseteq H$, i.e.\ $S^\prime [F]=(p^{-1}-1)^{e(F)} 2^{r(F)} \frac{X_F(H)^2}{X_F(\Krn)}$. We write $F_t$ for a hypergraph obtained from a good $F$  with $v$ isolated vertices by adding $t\le v/r$ isolated edges to it. Let $S[F]$ be the contribution of all subhypergraphs of $H$ that are isomorphic to $F_i$  for some $i$, where $0\le i\le v/r$. Thus, $S[F]=\sum_{i=0}^{v/r}(p^{-1}-1)^{e(F_i)} (1+n^{-\frac{1}{2}})^{r(F_i)} \frac{X_{F_i}(H)^2}{X_{F_i}(\Krn)}$. 
Every hypergraph from the sum in $S_H$ can be reduced to a good $F$ by deleting all  isolated edges. Therefore we have  $\SHp=\sum S^\prime[F]$ and $S_H=\sum S[F]$, where the sums are over all isomorphism classes of good subhypergraphs of $H$.  To prove the lemma it is thus sufficient to bound $S[F]/S^\prime[F]$ for every good $F\subseteq H$ by $(1+o(1)) e^{\frac{1-p}{p} \alpha^2 \edgeN}$.

Let $F\subseteq H$ be a good hypergraph with $v$ isolated vertices, then 
\begin{align*}
 X_{F_t}(\Krn) = X_F(\Krn)\cdot \frac{1}{t!} \binom{v}{r} \binom{v-r}{r}\cdot \ldots\cdot \binom{v-rt+r} {r}
\end{align*}
and
\begin{align*}
 X_{F_t}(H) \le X_F(H)\cdot \frac{1}{t!} e_H(v) e_H(v-r)\cdot \ldots\cdot e_H(v-rt+r).
\end{align*}
Setting $\beta_w = e_H(w)^2 / \binom{w}{r}$ we obtain
\begin{align*}
 \frac{X_{F_t}(H)^2}{X_{F_t}(\Krn)} \le \frac{X_F(H)^2}{X_F(\Krn)} \frac{\beta_v \beta_{v-r} \dots \beta_{v-rt+r}}{t!}.
\end{align*}
Since $e(F_t)=e(F)+t$ and $r(F_t)=r(F)+t$ we have
\begin{align}
\label{eq_S/S'}
 \frac{S[F]}{S^\prime[F]} \le 2^{-r(F)} (1+n^{-\frac{1}{2}})^{r(F)}\sum_{t=0}^{v/r} (p^{-1}-1)^t(1+n^{-\frac{1}{2}})^t \frac{\beta_v \beta_{v-r} \ldots \beta_{v-rt+r}}{t!}.
\end{align}

Next we take a closer look at the $\beta_w$ terms. 
Since $\Delta(H)\le \Delta$ we can bound $e_H(w)\le w\Delta/r$ and $\beta_w\le \left( \frac{w \Delta}{r} \right)^2 \binom{w}{r}^{-1}\le \frac{\Delta^2 r^{r-2}}{w^{r-2}}$. Therefore we estimate the product of $\beta_w$s as follows 
\begin{align*}
\prod_{i=0}^{t-1} \beta_{v-ir} \le \left( \Delta^2 r^{r-2} \right)^t \left( \prod_{i=0}^{t-1} (v-ir) \right)^{-(r-2)} \le 
\Delta^{2t} \left( \frac{(\lfloor v/r\rfloor-t)!}{\lfloor v/r\rfloor!} \right)^{r-2}.
\end{align*}
By approximating factorials with Stirling's formula we obtain
\[
\prod_{i=0}^{t-1} \beta_{v-ir}\le 
 \left( \frac{e^{r-2}\Delta^{2}}{\lfloor v/r\rfloor^{r-2}} \right)^{t}.
\]
Thus, we further upper bound $S[F]/S^\prime[F]$  by
\begin{equation}\label{eq:sfprime}
 \frac{S[F]}{S^\prime[F]} \le 2^{-r(F)} (1+n^{-\frac{1}{2}})^{r(F)}
 \sum_{t=0}^{v/r} (p^{-1}-1)^t(1+n^{-\frac{1}{2}})^t  \left( \frac{e^{r-2}\Delta^{2}}{\lfloor v/r\rfloor^{r-2}} \right)^{t} \frac{1}{t!}. 
\end{equation}

From Lemma~\ref{lem:estimates} it follows that $p^{-1}\Delta^2 n^{2-r}=o(\sqrt{n})$ and 
therefore 
\begin{equation}\label{eq:nvest}
p^{-1}\Delta^2 v^{2-r}=o((n/v)^{r-2}\sqrt{n})
\end{equation}
 and in the following we will distinguish four cases. 

Suppose $0\le v\le n/(100r \ln n)$. Then we use~\eqref{eq:nvest} to upper bound each term in the sum from~\eqref{eq:sfprime} by $n^v=\exp(n/(100r))$. On the other hand we have $r(F)\ge \frac{n-v}{r}>n/(2r)$. It follows that $2^{-r(F)}$ dominates each of the at most $n/r$ terms in the sum and the factor $(1+n^{-\frac{1}{2}})^{r(F)}$ as well. This gives us $S[F]/S^\prime[F]=o(1)$. If $v=0$ then we trivially have $S[F]/S^\prime[F]=o(1)$ as well.

Next we assume that $n/(100r \ln n)<v\le n - (\ln n)^{r-2}\sqrt{n}$. We can interpret  the sum in~\eqref{eq:sfprime} as the first $v/r+1$ terms in the expansion of $\exp\left( (p^{-1}-1) (1+n^{-1/2})  \frac{e^{r-2}\Delta^{2} }{\lfloor v/r\rfloor^{r-2}} \right)$, which leads to
\begin{equation}\label{eq:crucial_sum}
 \frac{S[F]}{S^\prime[F]}\le 2^{-r(F)} (1+n^{-\frac{1}{2}})^{r(F)} 
\exp\left(2p^{-1}\frac{e^{r-2}\Delta^{2}}{\lfloor v/r\rfloor^{r-2}}\right).
\end{equation}
Again we have $r(F)\ge \frac{n-v}{r}\ge \frac{(\ln n)^{r-2}\sqrt{n}}{r}$, whereas 
$\exp\left(2p^{-1}\frac{e^{r-2}\Delta^{2}}{\lfloor v/r\rfloor^{r-2}} \right)=\exp\left(o((\ln n)^{r-2}\sqrt{n})\right)$ by~\eqref{eq:nvest}. Thus, we have $S[F]/S^\prime[F]=o(1)$.

Assume now that $n - (\ln n)^{r-2}\sqrt{n}< v\le n-\sqrt{n}$. Similarly as in the previous case one gets 
$r(F)\ge \sqrt{n}/r$ and $\exp\left(2p^{-1}\frac{e^{r-2}\Delta^{2}}{\lfloor v/r\rfloor^{r-2}} \right)=\exp\left(o(\sqrt{n})\right)$. Again one gets $S[F]/S^\prime[F]=o(1)$ as before.

Finally, let $v>n-\sqrt{n}$ and we are going to use the inequality~\eqref{eq_S/S'} to estimate $S[F]/S^\prime[F]$. We bound $\beta_w$ with $e(H)^2\binom{w}{r}^{-1}=\alpha^2 \edgeN^2  \binom{w}{r}^{-1}$ which is $\alpha^2\edgeN (1+O(n^{-1/2}))$ for $w\ge n - (r+1)\sqrt{n}$. This gives us
\[
 \sum_{t=0}^{\sqrt{n}} (p^{-1}-1)^t(1+n^{-\frac{1}{2}})^t \frac{\left(\alpha^2\edgeN (1+O(n^{-1/2}))\right)^t}{t!}\le \exp\left(\frac{1-p}{p}\alpha^2\edgeN \left(1+O(n^{-1/2})\right)\right).
\]
By Lemma~\ref{lem:estimates} we have $\frac{1-p}{p}\alpha^2\edgeN n^{-1/2}=o(1)$. Thus, 
\[
\exp\left(\frac{1-p}{p}\alpha^2\edgeN \left(1+O(n^{-1/2})\right)\right) \le (1+o(1))\exp\left(\frac{1-p}{p}\alpha^2\edgeN \right).
\]

As for $t> \sqrt{n}$, we estimate the rest by~\eqref{eq:sfprime} and using~\eqref{eq:nvest} it follows:
\begin{align*}
 \sum_{t=\sqrt{n}}^{v/r} (p^{-1}-1)^t(1+n^{-\frac{1}{2}})^t \left( \frac{e^ {r-2} \Delta^2}{\lfloor v/r \rfloor^ {r-2}} \right)^t \frac{1}{t!}\le
\sum_{t=\sqrt{n}}^{v/r} {o(1)^t} =o(1).
\end{align*}
Combining together we obtain: $\frac{S[F]}{S^\prime[F]} \le (1+o(1)) e^{\alpha^2 \edgeN \frac{1-p}{p}}+o(1)=(1+o(1)) e^{\alpha^2 \edgeN \frac{1-p}{p}}$.

 Therefore, for every good $F$, we get in any of the four possible cases that
\begin{align*}
\frac{S[F]}{S^\prime[F]} \le (1+o(1)) e^{\alpha^2 \edgeN \frac{1-p}{p}}.
\end{align*}
This yields $S_H/\SHp\le (1+o(1)) e^{\alpha^2 \edgeN \frac{1-p}{p}}$ completing the proof.
\end{proof}

So far we have $f\le (1+o(1))e^{- \frac{1-p}{p} \alpha^2 \edgeN} S_H$ and 
$S_H \le (1+o(1)) e^{\frac{1-p}{p} \alpha^2 \edgeN} S^\prime_H$, thus $f\le (1+o(1))\SHp$. In the following 
one bounds $\SHp$ by bounding first $X_F(H)/X_F(\Krn)$.
\begin{lemma}[an adaptation of Lemma~4.4 from~\cite{Riordan}]
\label{le_XFdelta}
 Let $H$ be any $r$-uniform hypergraph with maximum degree $\Delta$ and $F \subseteq H$, then
\begin{align*}
 \frac{X_F(H)}{X_F(\Krn)} \le \frac{\left(2 (r-1)! \Delta\right)^{r(F)}e^{r(F)+(r-2)k_r(F)}}{n^{r(F)+(r-2)k_r(F)}} 
\end{align*}
\end{lemma}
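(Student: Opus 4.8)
The plan is to reduce everything to counting bijective, edge‑preserving maps. Since $F$ and $\Krn$ both have $n$ vertices, writing $\mathrm{emb}(F,G)$ for the number of bijections $\phi\colon V(F)\to V(G)$ with $\phi(e)\in E(G)$ for every $e\in E(F)$, one has $X_F(G)=\mathrm{emb}(F,G)/|\mathrm{Aut}(F)|$; as every bijection into $V(\Krn)$ is edge‑preserving, $\mathrm{emb}(F,\Krn)=n!$, so the quantity to bound is $X_F(H)/X_F(\Krn)=\mathrm{emb}(F,H)/n!$.

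Next I would bound $\mathrm{emb}(F,H)$ by building such a bijection component by component. Let $C_1,\dots,C_{k_r}$ be the non‑trivial components of $F$, with $v_j:=v(C_j)\ge r$, let $k_1=k_1(F)$ be the number of isolated vertices, so $\sum_j v_j=n-k_1$, and put $N_j:=n-\sum_{l<j}v_l$. The key sub‑claim is that for a connected non‑trivial $r$‑uniform hypergraph $C$ on $v$ vertices with a fixed vertex $u_0$, the number of edge‑preserving injections $\psi\colon V(C)\to[n]$ with $\psi(u_0)\in W$ is at most $|W|\,(\Delta(r-1)!)^{v-r+1}$. To see this, order the edges of $C$ as $f_1,f_2,\dots$ with $u_0\in f_1$ and $f_i\cap(f_1\cup\dots\cup f_{i-1})\ne\emptyset$ for $i\ge2$, and reveal $\psi$ in this order: the root contributes $\le|W|$ choices; when an edge $f_i$ with $a_i\ge1$ already‑placed vertices introduces its $s_i=r-a_i\le r-1$ new vertices, there are $\le\Delta$ choices for the edge of $H$ through the $a_i$ placed images and $\le s_i!$ ways to match the new vertices, while edges introducing no new vertex cost nothing; since $s_1=r-1$, every later productive edge has $s_i\ge1$ and $\sum_i s_i=v-1$, there are at most $v-r+1$ productive edges. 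Placing $C_1,\dots,C_{k_r}$ on pairwise disjoint vertex sets and the isolated vertices on the remaining $k_1$ positions, and using $\sum_j(v_j-r+1)=r(F)$, this gives
\[
\mathrm{emb}(F,H)\le k_1!\Big(\prod_{j=1}^{k_r}N_j\Big)\bigl(\Delta(r-1)!\bigr)^{r(F)}.
\]

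Finally, dividing by $n!$ and recalling $r(F)+(r-2)k_r(F)=n-k_1-k_r$, it remains to prove
\[
\frac{k_1!\,\prod_{j=1}^{k_r}N_j}{n!}\le\frac{2^{r(F)}\,e^{\,n-k_1-k_r}}{n^{\,n-k_1-k_r}}.
\]
Here I expect the only delicate point to lie: the naive bound $\prod_j N_j\le n^{k_r}$ is too wasteful (it already fails for perfect matchings, where $\prod_j N_j$ is of order $e^{-k_r}n^{k_r}$ up to polynomial factors), so one must keep the product. Since each $v_l\ge r$ one has $N_j\le n-(j-1)r$, hence $\prod_j N_j\le\prod_{i=0}^{k_r-1}(n-ir)$, and combined with $\sum_j v_j=n-k_1$ the displayed inequality follows from a calculation with Stirling's formula — the factor $2^{r(F)}$ (note $r(F)\ge (n-k_1-k_r)/(r-1)$) providing the slack needed to absorb the error terms. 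This mirrors, with extra bookkeeping for $r\ge3$ (the $(r-1)!$ and $s_i!$ factors, and $v_j-r+1$ in place of $v_j-1$), Riordan's proof of the corresponding graph statement.
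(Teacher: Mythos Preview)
Your argument is correct and, at the crucial point, tighter than the paper's own sketch. Both you and the paper reduce to bounding $Y_F(H)/n!$, where $Y_F(H)$ counts labelled (bijective, edge--preserving) copies. The paper's sketch places all $k_r$ component roots first at cost $(n)_{k_r}$, then the first edge of each component at cost $(\Delta(r-1)!)^{k_r}$, then the remaining $r(F)-k_r$ vertices at cost $((r-1)\Delta)^{r(F)-k_r}$, then isolated vertices at cost $k_1!$. You instead embed components sequentially (the $j$-th root into the $N_j$ currently unused vertices) and proceed edge by edge inside each component, obtaining $Y_F(H)\le k_1!\,(\prod_j N_j)(\Delta(r-1)!)^{r(F)}$. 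Your per--component bound via an edge ordering is clean: each productive edge contributes at most $\Delta\cdot s_i!\le \Delta(r-1)!$, and from $s_1=r-1$, $\sum s_i=v-1$ with $s_i\ge 1$ one indeed gets at most $v-r+1$ productive edges.

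The difference in root placement is not cosmetic. After cancellation, the paper's sketch requires $\tfrac{k_1!}{(n-k_r)!}\le \tfrac{2^{r(F)}(r-2)!^{\,r(F)-k_r}e^{m}}{n^{m}}$, which for $r=3$ and $F$ a perfect matching ($k_1=0$, $k_r=n/3$, $r(F)=n/3$, $m=2n/3$) becomes $n^{2n/3}\le 2^{n/3}e^{2n/3}(2n/3)!$; by Stirling this is equivalent to $(9/8)^{n/3}=O(\sqrt n)$ and fails for large $n$. Your bound $\prod_j N_j\le \prod_{i=0}^{k_r-1}(n-ir)$ is exactly the extra saving needed: writing $B=[k_1+1,n]\setminus\{n-ir:0\le i<k_r\}$ (a set of size $m$), your remaining inequality is $(n/e)^m\le 2^{r(F)}\prod_{j\in B}j$, and a Stirling computation shows $\prod_{j\in B}j=(n/e)^m$ up to a factor polynomial in $n$ (in the perfect--matching extreme it is $\sim\sqrt r\,(n/e)^m$), so the $2^{r(F)}\ge 2^{m/(r-1)}$ slack you identified is ample. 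In short, your route is the right one; the paper's sketch, taken literally, does not close for $r\ge 3$.
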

 
 The lemma above bounds $\SHp$ as follows: 
 \[
 \SHp\le \sum^\prime_{F\subseteq H} (p^{-1}-1)^{e(F)}  \frac{\left(4 (r-1)! \Delta\right)^{r(F)}e^{r(F)+(r-2)k_r(F)}}{n^{r(F)+(r-2)k_r(F)}}=:T^\prime_H.\]  Proceeding exactly as in~\cite{Riordan}, one introduces 
 $\psi(F):=(p^{-1}-1)^{e(F)}\frac{\left(4 (r-1)! \Delta\right)^{r(F)}e^{r(F)+(r-2)k_r(F)}}{n^{r(F)+(r-2)k_r(F)}}$ which is multiplicative: it holds $\psi(F_1\cup F_2)=\psi(F_1)\psi(F_2)$ for any two vertex-disjoint hypergraphs $F_1$ and $F_2$ (i.e.\ for all $e\in E(F_1)$ and $f\in E(F_2)$ one has $e\cap f=\emptyset$). Since every good hypergraph consists of maximally connected edge-disjoint good hypergraphs we get
 \begin{align*}
 T^\prime_H=\sum_{F \subseteq H}^{\prime} \psi(F) \le 1 + \sum_{i=1}^\infty \frac{1}{t!} \left( \sum_{F \subseteq H}^{\prime\prime} \psi(F) \right)^t,
\end{align*}
where $\sum^{\prime \prime}$ is the sum over connected good hypergraphs.
 We set $T_H^{\prime \prime} = \sum_{F \subseteq H}^{\prime \prime} \psi(F)$, thus the above shows $T_H^\prime \le e^{T_H^{\prime \prime}}$.

\begin{lemma}[an adaptation of Lemma~4.5~\cite{Riordan}]
\label{le_TH´´}
 For every $r$-uniform hypergraph $H$ on $[n]$ we have
\begin{align}
\label{eq_TH''}
 T_H^{\prime \prime} \le n e^{2r} \sum_{s=r+1}^n \left(\frac{12 r!^2 \Delta^2}{n}\right)^{s-1}  p^{-e_H(s)},
\end{align}
where $\Delta$ is the maximum degree of $H$.
\end{lemma}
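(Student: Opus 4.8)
The plan is to bound $T_H^{\prime\prime}=\sum^{\prime\prime}_{F\subseteq H}\psi(F)$, where the sum runs over all connected good subhypergraphs $F$ of $H$. The first step is to organise this sum by the number of vertices $s=v(F)$ of the connected hypergraph $F$. Since $F$ is connected and good, it has no isolated vertices and no isolated edges, so $s\ge r+1$; also $k_1(F)=0$ and $k_r(F)=1$, whence $r(F)=s-(r-1)$ and $\psi(F)=(p^{-1}-1)^{e(F)}\cdot\frac{(4(r-1)!\Delta)^{s-r+1}e^{s-1}}{n^{s-1}}$. Using $p^{-1}-1\le p^{-1}$ and grouping, $\psi(F)\le \left(\frac{4(r-1)!\Delta\,e}{n}\right)^{s-1}\cdot\left(4(r-1)!\Delta e\right)^{2-r}\cdot p^{-e(F)}$, and since $(4(r-1)!\Delta e)^{2-r}\le 1$ we may drop it. So it remains to count, for each $s$, how many connected good $F\subseteq H$ there are on a given $s$-element vertex set and with a given number of edges, and to use that $e(F)\ge$ something controlled so as to replace $p^{-e(F)}$ by $p^{-e_H(s)}$.

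The counting step goes as follows. To specify a connected subhypergraph $F\subseteq H$ on $s$ vertices one can first choose the vertex set: since a connected $r$-graph on $s$ vertices has a spanning sub-structure reachable by a ``growth'' process adding one edge (and at most $r-1$ new vertices) at a time, the number of ways to reach a set of $s$ vertices from a fixed starting point, following edges of $H$, is at most roughly $(r\cdot s\cdot \Delta)^{(s-1)/1}$ up to the $(r-1)!$ and constant factors; more precisely one bounds the number of connected vertex subsets of size $s$ containing a fixed vertex by $(e\cdot r\cdot\Delta)^{s}$ or similar, and there are at most $n$ choices for a root. Then, having fixed the $s$ vertices, $F$ is a subhypergraph of $H[\{s\text{ vertices}\}]$, which has at most $e_H(s)$ edges, so there are at most $2^{e_H(s)}$ choices of edge set — but we need the factor $p^{-e(F)}$, and since $p\le 1$ we have $p^{-e(F)}\le p^{-e_H(s)}$ for every such $F$ (here we use $e(F)\le e_H(s)$ and $p^{-1}\ge 1$). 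Folding $2^{e_H(s)}$ into the accounting is handled because one only needs an inequality of the shape \eqref{eq_TH''}; the $2^{e_H(s)}$ can be absorbed into the $p^{-e_H(s)}$ since $p^{-1}\to\infty$, or alternatively the constant $12$ (vs.\ $4e$) is chosen generously enough to swallow the relevant factors. Combining: $T_H^{\prime\prime}\le n\sum_{s=r+1}^{n}\left(\frac{C\Delta^2}{n}\right)^{s-1}p^{-e_H(s)}$ for a suitable absolute constant, and tracking the constants carefully (the $e^{2r}$ comes from Stirling-type slack in the vertex-subset count and the $(r!)^2$ from the $(r-1)!$ factor squared) gives exactly the stated form with $12(r!)^2\Delta^2/n$ and the leading $ne^{2r}$.

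The main obstacle I expect is the bookkeeping in the vertex-subset count: making precise the claim that the number of connected subhypergraphs of $H$ on $s$ vertices rooted at a fixed vertex is at most $(\text{const}\cdot r!\cdot\Delta)^{s-1}$, because each edge added in a breadth-first exploration of $F$ can introduce up to $r-1$ new vertices, and one must avoid overcounting while getting the exponent of $\Delta$ down to the single power per new vertex that \eqref{eq_TH''} permits (the exponent of $\Delta^2$ in the final bound must match $r(F)+(r-2)k_r(F)=s-1$, i.e.\ exactly the power appearing in $\psi(F)$ via Lemma~\ref{le_XFdelta}, with one extra $\Delta$-power of slack per edge coming from the exploration). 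In Riordan's original argument this is the analogue of his Lemma~4.5 and the same strategy — count rooted connected subgraphs via an edge-by-edge exploration, then sum over roots and over edge sets using $p^{-e(F)}\le p^{-e_H(s)}$ — carries over; the only genuinely new point for $r\ge3$ is that an edge carries $r-1$ rather than $1$ potential new vertices, which is why $r!$ (not just a constant) appears and why the crude bound $e(F)\le e_H(s)$ together with $p\le1$ is exactly what one needs to collapse the edge-set sum. Once these constants are pinned down the estimate \eqref{eq_TH''} follows; I would relegate the detailed constant-chasing to the Appendix as the paper indicates.
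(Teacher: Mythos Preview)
Your overall architecture is right --- group by $s=v(F)$, use $r(F)=s-(r-1)$ and $k_r(F)=1$, then count connected vertex sets and sum over edge sets --- but the edge-set step has a genuine gap. You bound $(p^{-1}-1)^{e(F)}\le p^{-e(F)}\le p^{-e_H(s)}$ and then multiply by the number $2^{e_H(s)}$ of edge subsets, proposing to absorb this factor either into the constant $12$ or into $p^{-e_H(s)}$. Neither works in general: since the lemma is used with $\Delta$ (and hence $e_H(s)\le \Delta s/r$) allowed to grow, the factor $2^{e_H(s)}$ can be as large as $2^{\Delta s/r}$, which is super-polynomial in $\Delta$ and cannot be swallowed by any $C^{s-1}$ with $C$ depending polynomially on $\Delta$; and replacing $p^{-e_H(s)}$ by $(2/p)^{e_H(s)}$ would alter the conclusion and break the downstream estimate $np^{\gamma(H)}\Delta^{-4}\to\infty$ when $\gamma(H)$ grows.

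The fix is not to discard the ``$-1$'' prematurely. For a fixed $s$-vertex set $V$ with $H[V]$ connected, one upper-bounds $\sum_{F}(p^{-1}-1)^{e(F)}$ (over all $F\subseteq H[V]$) by $\sum_{m=0}^{e_H(s)}\binom{e_H(s)}{m}(p^{-1}-1)^m$, which by the binomial theorem equals $p^{-e_H(s)}$ exactly --- no stray $2^{e_H(s)}$. This is what the paper does. For the vertex-set count, your exploration sketch is in the right spirit but too vague to extract the stated constants; the paper's route is to pass to the shadow graph $H'$ (which is connected on $V$), note that it contains a spanning tree, count labelled trees in $H'$ on $s$ vertices by bounding embeddings of a fixed tree by $n(\Delta(r-1)!)(\Delta(r-1))^{s-r}$ and multiplying by the $s^{s-2}$ labelled trees, then divide by $s!$ to unlabel. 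This is where the $(r!)^2$ and the $e^{2r}$ come from; your ``breadth-first growth'' argument does not obviously give the correct power of $\Delta$ per new vertex without this spanning-tree trick.
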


Now we are in a position to finish the argument. We further estimate $T_H^{\prime \prime}$ using the above as follows:
\[
 T_H^{\prime \prime} \le n e^{2r} \sum_{s=r+1}^n \left(\frac{12 r!^2 \Delta^2}{n}\right)^{s-1}  p^{-e_H(s)}\le 12 e^{2r} r!^2  \sum_{s=r+1}^n \left(12 r!^2 \Delta^4 p^{-e_H(s)/(s-2)}n^{-1} \right)^{s-2}. 
\]
Therefore we get $ T_H^{\prime \prime}\le 12 e^{2r} r!^2  \sum_{s=r+1}^n \left(12 r!^2 \Delta^4 p^{-\gamma(H)}n^{-1} \right)^{s-2}$, which by condition~\eqref{eq_npinfty} tends to zero as $n$ goes to infinity. Thus, $ T_H^{\prime \prime}=o(1)$, and with $T_H^\prime \le e^{T_H^{\prime \prime}}$ and $f\le (1+o(1))\SHp\le T_H^\prime$ we obtain $f\le 1+o(1)$ and then by Chebyshev's inequality~\eqref{eq:chebyshev} the statement of Theorem~~\ref{theorem_main}  follows for $\Hnmp$. 

\section{Applications of Theorem \ref{theorem_main}}\label{sec_applications}

First we obtain the following two corollaries. 
\begin{corollary}
\label{Cor_theom} Let  $r$, $\Delta\ge 2$ be integers and $H=H^{(i)}$ a sequence of $r$-uniform hypergraphs with  $n=n(i)$ vertices, $\Delta(H) \le \Delta$, $e(H) > n/r$ and $\gamma(H) = e(H) / (n-2)$. Then for $p = \omega \left( n^{-1/\gamma(H)}\right)$ the random graph $\Hnp$ contains  a copy of $H$ a.a.s., while for every $\eps>0$ we have for $p \le (1-\eps)(e/n)^{1/\gamma}$ that  $\PP(H\subseteq \Hnp)\rightarrow 0$.\end{corollary}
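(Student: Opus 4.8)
The plan is to derive Corollary~\ref{Cor_theom} as a direct specialisation of Theorem~\ref{theorem_main} for the $1$-statement and by a standard first moment computation for the $0$-statement.

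For the $1$-statement, I would simply check that the hypotheses of Corollary~\ref{Cor_theom} imply conditions~\eqref{eq_theocond} and~\eqref{eq_npinfty} of Theorem~\ref{theorem_main}. The assumption $e(H)>n/r$ gives the first part of~\eqref{eq_theocond}. Since $\Delta$ is a constant and $\gamma(H)=e(H)/(n-2)$ with $e(H)>n/r$, we have $\gamma(H)\ge (n/r)/(n-2)\to 1/r>0$, in particular $\gamma(H)$ is bounded away from $0$; hence $p=\omega(n^{-1/\gamma(H)})$ forces $p=\omega(n^{-O(1)})$, and combined with $e(H)>n/r$ this yields $p\binom{n}{r}\to\infty$, giving the second part of~\eqref{eq_theocond}. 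Finally, $p=\omega(n^{-1/\gamma(H)})$ means exactly $np^{\gamma(H)}\to\infty$, and since $\Delta$ is a fixed constant, $\Delta^{-4}$ is a constant, so $np^{\gamma(H)}\Delta^{-4}\to\infty$, which is~\eqref{eq_npinfty}. Theorem~\ref{theorem_main} then gives $H\subseteq\Hnp$ a.a.s.

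For the $0$-statement, I would use a union bound over all copies of $H$ in $\Krn$ and Markov's inequality. The number of copies of $H$ in $\Krn$ is at most $n!\le n^n$, and more precisely at most $\binom{n}{v(H)}v(H)!/\mathrm{aut}(H)\le n^n$; each fixed copy survives in $\Hnp$ with probability $p^{e(H)}$. Hence $\PP(H\subseteq\Hnp)\le \EE(X)=X_H(\Krn)\,p^{e(H)}\le n^n p^{e(H)}$. Since $e(H)=\gamma(H)(n-2)$, this is at most $n^n p^{\gamma(H)(n-2)} = n^2\bigl(n p^{\gamma(H)}\bigr)^{n-2}\cdot n^{-... }$; more cleanly, $n^n p^{e(H)} \le n^{n}\bigl((1-\eps)^{\gamma}e/n\bigr)^{n-2}$ when $p\le (1-\eps)(e/n)^{1/\gamma}$, and a quick estimate $n^n (e/n)^{n-2} = e^{n-2} n^{2}\le e^{n}n^2$ shows the bound behaves like $\bigl((1-\eps)^{\gamma}e\cdot (\text{const})\bigr)^{n}\cdot\mathrm{poly}(n)$, which tends to $0$ provided $(1-\eps)^\gamma e<1$ after absorbing lower-order terms. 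The only subtlety is that one needs the count of copies to be genuinely close to $n!/\mathrm{aut}(H)$ rather than $n^n$, or equivalently one must be slightly more careful: using $X_H(\Krn)\le \binom{n}{n}n!=n!$ and Stirling $n!\le e\sqrt{n}(n/e)^n$, we get $\EE(X)\le e\sqrt n (n/e)^n p^{e(H)}\le e\sqrt n (n/e)^n (1-\eps)^{\gamma(n-2)}(e/n)^{n-2} = e\sqrt n\, (n/e)^{2}(1-\eps)^{\gamma(n-2)}$, which tends to $0$ since $(1-\eps)^\gamma<1$ and the polynomial factor is dominated by the exponentially small one.

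The main obstacle, such as it is, is purely bookkeeping in the $0$-statement: one must make sure the $(n/e)^n$ from Stirling cancels exactly against the $(e/n)^{n-2}$ coming from $p\le(1-\eps)(e/n)^{1/\gamma}$ raised to the power $e(H)=\gamma(n-2)$, leaving only a polynomial factor times $(1-\eps)^{\gamma(n-2)}\to 0$. This is why the threshold in the corollary is stated as $(e/n)^{1/\gamma}$ rather than simply $n^{-1/\gamma}$ — the constant $e$ is precisely what is needed to match the Stirling approximation of the number of embeddings. No new ideas beyond Theorem~\ref{theorem_main} and elementary estimates are required.
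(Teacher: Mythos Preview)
Your approach is correct and essentially identical to the paper's: verify conditions~\eqref{eq_theocond} and~\eqref{eq_npinfty} of Theorem~\ref{theorem_main} for the $1$-statement, and use Markov's inequality together with Stirling's bound $n!\le e\sqrt{n}(n/e)^n$ for the $0$-statement (the paper writes the resulting bound as $\EE(X)\le 3\sqrt{n}(1-\eps)^{e(H)}(n/e)^2=o(1)$, which is exactly your final expression). The only remark is that your write-up of the $0$-statement contains a false start before you arrive at the clean Stirling computation; the final line $e\sqrt{n}\,(n/e)^{2}(1-\eps)^{\gamma(n-2)}\to 0$ (using $\gamma(n-2)=e(H)>n/r$) is precisely what is needed and should be stated directly.
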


\begin{proof}

Since $\Delta$ is fixed and $\gamma(H)\le (1+o(1))\Delta$, the conditions ~\eqref{eq_theocond} and~\eqref{eq_npinfty} are satisfied. Theorem~\ref{theorem_main} yields the first part of the claim.
 
 Let $X$ be the number of copies of $H$ in $\Hnp$ and we estimate its expectation $\EE (X)$ as follows:
\[
\EE(X)\le n! p^{e(H)} \le 3 \sqrt{n} (1 - \eps)^{e(H)} (n/e)^2 = o(1).
\]
Now Markov's inequality $\PP(X\ge 1)\le \EE(X)$ yields the second part of the corollary.
\end{proof}

We call a hypergraph $H$ $d$-regular if every vertex of $H$ has degree $d$.
\begin{corollary}\label{Cor_theom2}
Let  $r\ge 2$ be an integer and 
 $H=H^{(i)}$ be a sequence of $\Delta$-regular $r$-uniform hypergraphs 
 where   $\Delta=\omega(\ln(n)^{1-1/r})$ but $\Delta = o(n^{1/4})$. 
 Then for every $\eps>0$  we have that $\Hnp$ contains a.a.s.\ $H$ if  $p=(1+\eps) n^{-r/\Delta}$.
 Furthermore  $\PP(H\subseteq \Hnp)\to 0$ for $p\le n^{-r/\Delta}$, i.e.\ 
 $p=n^{-r/\Delta}$ is a sharp threshold for the appearance of copies of $H$ in $\Hnp$.
\end{corollary}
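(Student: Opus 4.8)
The plan is to obtain the positive part from Theorem~\ref{theorem_main} and the negative part from a first moment estimate, in the spirit of the proof of Corollary~\ref{Cor_theom}. Since $H$ is $\Delta$-regular on $n$ vertices we have $e(H)=n\Delta/r$, hence $\alpha=e(H)/\edgeN=\Delta/\binom{n-1}{r-1}$, and $\Delta\to\infty$ because $\Delta=\omega\bigl((\ln n)^{1-1/r}\bigr)$. For the negative part, let $p\le n^{-r/\Delta}$ and let $X$ count the copies of $H$ in $\Hnp$; then, using Stirling's formula,
\begin{align*}
\EE(X)\le n!\,p^{e(H)}\le n!\,\bigl(n^{-r/\Delta}\bigr)^{n\Delta/r}=n!\,n^{-n}=O\bigl(\sqrt{n}\,e^{-n}\bigr)=o(1),
\end{align*}
so Markov's inequality gives $\PP(H\subseteq\Hnp)=\PP(X\ge1)\to0$.

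For the positive part fix $\eps>0$ and put $p=(1+\eps)n^{-r/\Delta}$. Condition~\eqref{eq_theocond} is clear: $\alpha\edgeN=n\Delta/r>n/r$, and $p\edgeN=\Theta\bigl(n^{r(1-1/\Delta)}\bigr)\to\infty$ since $\Delta\ge2$ eventually. The work is in verifying~\eqref{eq_npinfty}, for which I need a sharp estimate of $\gamma(H)$. Taking $v=n$ shows $\gamma(H)\ge e_H(n)/(n-2)=\tfrac{n}{n-2}\cdot\tfrac{\Delta}{r}\ge\tfrac{\Delta}{r}$. For the upper bound, $\Delta$-regularity gives $e_H(v)\le v\Delta/r$ for every $v$ (counting incidences), while trivially $e_H(v)\le\binom{v}{r}$; thus $e_H(v)/(v-2)\le\min\{v\Delta/r,\binom{v}{r}\}/(v-2)$, and a short analysis shows the right-hand side is maximized near the crossover value $v_0=\Theta\bigl(\Delta^{1/(r-1)}\bigr)$ where $\binom{v_0}{r}\approx v_0\Delta/r$, with maximum $\tfrac{\Delta}{r}\bigl(1+O(v_0^{-1})\bigr)$. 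Hence $\gamma(H)=\tfrac{\Delta}{r}(1+\delta_n)$ with $0\le\delta_n=O\bigl(\Delta^{-1/(r-1)}\bigr)$.

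Consequently $n\,p^{\gamma(H)}\Delta^{-4}=(1+\eps)^{\gamma(H)}\,n^{\,1-(r/\Delta)\gamma(H)}\,\Delta^{-4}=(1+\eps)^{\gamma(H)}\,n^{-\delta_n}\,\Delta^{-4}$, so~\eqref{eq_npinfty} amounts to
\begin{align*}
\gamma(H)\ln(1+\eps)-\delta_n\ln n-4\ln\Delta\longrightarrow\infty.
\end{align*}
Here $\gamma(H)\ln(1+\eps)=\Theta(\Delta)$ and $4\ln\Delta=o(\Delta)$, while $\delta_n\ln n=O\bigl(\Delta^{-1/(r-1)}\ln n\bigr)$, which is $o(\Delta)$ precisely because $\Delta=\omega\bigl((\ln n)^{1-1/r}\bigr)$ (rearrange $\Delta^{-1/(r-1)}\ln n=o(\Delta)$ to $\ln n=o\bigl(\Delta^{r/(r-1)}\bigr)$). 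Thus~\eqref{eq_npinfty} holds, and Theorem~\ref{theorem_main} (whose applicability is also consistent with the standing bound $\Delta=o(n^{1/4})$) yields $H\subseteq\Hnp$ a.a.s.; together with the negative part this shows $p=n^{-r/\Delta}$ is a sharp threshold.

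The main obstacle is the estimate of $\gamma(H)$ in the second paragraph: the naive bound $\gamma(H)=(1+o(1))\Delta/r$ is not good enough, since an $o(1)$ relative error gets multiplied by $\ln n$ in the exponent of $n^{-\delta_n}$ and can then overwhelm the $\Theta(\Delta)$ gain carried by the factor $(1+\eps)^{\gamma(H)}$. One genuinely needs the quantitative deviation bound $\delta_n=O(\Delta^{-1/(r-1)})$ coming from the two-sided comparison of $e_H(v)$ with $v\Delta/r$ and $\binom{v}{r}$, and to observe that the hypothesis $\Delta=\omega\bigl((\ln n)^{1-1/r}\bigr)$ is exactly calibrated so that $\delta_n\ln n=o(\Delta)$.
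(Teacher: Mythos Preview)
Your proof is correct and follows essentially the same approach as the paper: the negative part via the first moment, and the positive part by bounding $\gamma(H)$ through $e_H(v)\le\min\{v\Delta/r,\binom{v}{r}\}$ to obtain $\gamma(H)=\tfrac{\Delta}{r}\bigl(1+O(\Delta^{-1/(r-1)})\bigr)$ (the paper writes this as $\gamma(H)\le\tfrac{\Delta}{r}\cdot\tfrac{\Delta^{1/(r-1)}+1}{\Delta^{1/(r-1)}-1}$), and then verifying~\eqref{eq_npinfty} using precisely the hypothesis $\Delta=\omega\bigl((\ln n)^{1-1/r}\bigr)$. Your logarithmic decomposition of $np^{\gamma(H)}\Delta^{-4}$ is a slightly cleaner presentation of the same computation the paper carries out directly.
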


\begin{proof}
Let $X$ count the copies of $H$ in $\Hnp$ and for  $p\le n^{-r/\Delta}$ we have
\[
\PP(X\ge 1)\le \EE(X)\le n! n^{-re(H)/\Delta}=n! n^{-n}=o(1).
\]

Next we bound $\gamma(H)$ as follows: 
$\Delta/r\le \gamma(H)\le \frac{\Delta}{r} \frac{(\Delta^{1/(r-1)} + 1)}{(\Delta^{1/(r-1)}-1)}$. This is obtained 
from the estimate $e_H(v) \le \min \{ \Delta v/r , {v \choose r}\}$ by considering two cases whether $v \le \Delta^{1/(r-1)} +1$ or not. Let $\eps\in(0,1)$ and notice that~\eqref{eq_theocond} is satisfied.
 It also holds that
 \begin{multline*}
 n \left((1+\eps) n^{-r/\Delta}\right)^{\gamma(H)} \Delta^{-4}\ge 
 \left((1+\eps)n^{1/\gamma(H)-r/\Delta} \Delta^{-4r(1+o(1))/\Delta}\right)^{\gamma(H)}\ge\\
 \left((1+\eps)n^{-2r/(\Delta^{1+1/(r-1)})} (1+o(1))\right)^{\gamma(H)} 
  \rightarrow \infty,
 \end{multline*}
 and  therefore Theorem~\ref{theorem_main} is applicable and the statement follows. 
\end{proof}

Thus Theorem~\ref{theorem_main} (Corollaries~\ref{Cor_theom} and~\ref{Cor_theom2}) states that under some technical conditions the threshold for the appearance of the spanning structure comes from the expectation threshold defined in the introduction.  Further it should be noted that the appearance of $1$-overlapping Hamilton cycles and also perfect matchings and of general $F$-factors the structure in question appears as soon as some local obstruction (isolated vertices, no vertices in some copy of a fixed graph $F$) disappears. 
Thus, there seem to be two types of behaviour that are responsible for the threshold for the appearance of a bounded degree spanning structure.

In the following we derive asymptotically optimal thresholds for the appearance of various spanning structures in $\Hnp$
which are consequences of the Corollaries~\ref{Cor_theom} and~\ref{Cor_theom2}.
\subsection{Hamilton Cycles} The following is a slightly weaker version of Dudek and Frieze~\cite{Dudek-Frieze_hamiltonian}.
\begin{corollary}
 For all integers $r>\ell\ge 2$, $(r-\ell) | n$ and $p=\omega(n^{\ell-r})$ the random hypergraph $\mathcal{H}^{(r)}(n,p)$ is  $\ell$-hamiltonian a.a.s. 
\end{corollary}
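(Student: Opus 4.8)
The plan is to deduce this corollary directly from Corollary~\ref{Cor_theom} applied to the hypergraph $H$ being an $\ell$-overlapping Hamilton cycle $C^{(r,\ell)}_n$ on $n$ vertices. First I would record the basic parameters of $H$: it has $n$ vertices, exactly $n/(r-\ell)$ edges, and every vertex lies in a bounded number of edges, so $\Delta(H)=\Delta$ is a constant depending only on $r$ and $\ell$ (in fact $\Delta = \lceil r/(r-\ell)\rceil$ or so). Since $\ell\ge 2$ we have $r-\ell\le r-2$, hence $e(H)=n/(r-\ell)\ge n/(r-2)>n/r$, so the first condition in Corollary~\ref{Cor_theom} is met. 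The second ingredient is to compute $\gamma(H)$.

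The heart of the matter is the claim that $\gamma(C^{(r,\ell)}_n)$ is governed by small subhypergraphs, and more precisely that the extremal ratio $e_H(v)/(v-2)$ is maximized by short sub-paths of the cycle. For a sub-path of $C^{(r,\ell)}_n$ consisting of $k$ consecutive edges one has $v = r + (k-1)(r-\ell)$ vertices, so $e_H(v)/(v-2) = k/(r+(k-1)(r-\ell)-2)$; as $k\to\infty$ this ratio increases to its supremum $1/(r-\ell)$, and one checks that sub-paths give larger ratios than unions of several disjoint paths or the whole cycle (the whole cycle gives $\frac{n/(r-\ell)}{n-2}\to \frac{1}{r-\ell}$ as well). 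The upshot is $\gamma(H) = \max_{r+1\le v\le n} e_H(v)/(v-2)$ and one must verify this equals $e(H)/(n-2) = \frac{n/(r-\ell)}{n-2}$, i.e.\ that taking all of $H$ is (asymptotically) optimal; in any case $\gamma(H) = (1+o(1))/(r-\ell)$. Plugging into Corollary~\ref{Cor_theom}, the threshold $p=\omega(n^{-1/\gamma(H)})$ becomes $p = \omega(n^{-(r-\ell)(1+o(1))}) $, which for the relevant regime matches $\omega(n^{\ell-r})$ up to the $o(1)$ in the exponent — here I would be slightly careful, perhaps instead invoking Corollary~\ref{Cor_theom} with the exact value $\gamma(H)=e(H)/(n-2)$ so that $n^{-1/\gamma(H)} = (n-2)^{-(r-\ell)/n}\cdot$(stuff) and the clean bound $\omega(n^{\ell-r})$ drops out directly, or noting that $\gamma(H)\ge 1/(r-\ell)$ suffices since larger $\gamma$ only makes condition~\eqref{eq_npinfty} easier while $p=\omega(n^{\ell-r})$ already exceeds any $\omega(n^{-1/\gamma(H)})$ with $\gamma(H)\le 1/(r-\ell)$.

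I would then check the remaining hypotheses of Corollary~\ref{Cor_theom}: $\Delta\ge 2$ is a fixed integer (true for $r>\ell\ge 2$), and $p=\omega(n^{\ell-r})$ with $\ell-r\le -2<0$ gives $p\binom{n}{r}=\omega(n^{\ell})\to\infty$, so condition~\eqref{eq_theocond} holds; condition~\eqref{eq_npinfty}, namely $np^{\gamma(H)}\Delta^{-4}\to\infty$, follows since $\Delta$ is constant and $n\cdot(\omega(n^{\ell-r}))^{1/(r-\ell)} = n\cdot\omega(n^{-1}) = \omega(1)$. Hence Corollary~\ref{Cor_theom} applies and a.a.s.\ $\Hnp$ contains a copy of the $\ell$-overlapping Hamilton cycle, which is exactly the assertion that $\Hnp$ is $\ell$-hamiltonian a.a.s.

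The main obstacle I anticipate is the precise determination (or sufficiently tight lower bound) of $\gamma(H)$ for $H = C^{(r,\ell)}_n$: one must argue that among all subhypergraphs $F\subseteq H$ on $v$ vertices, the maximum number of edges $e_H(v)$ is attained by a disjoint union of sub-paths, and then that a single long sub-path (or the whole cycle) is the best choice for the ratio $e_H(v)/(v-2)$. This is a small combinatorial extremal computation — essentially because each additional edge of a sub-path costs exactly $r-\ell$ new vertices, while ``closing up'' or starting a new path component is never cheaper per edge in the relevant range — but it needs to be done carefully to land on the exponent $r-\ell$ and hence the stated threshold $\omega(n^{\ell-r})$. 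Everything else is a routine verification of the hypotheses of Corollary~\ref{Cor_theom}.
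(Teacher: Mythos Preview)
Your approach is the same as the paper's: verify $\gamma(C^{(r,\ell)}_n)=e(H)/(n-2)=\tfrac{n}{(r-\ell)(n-2)}$ and invoke Corollary~\ref{Cor_theom}. The paper's computation of $\gamma$ is cleaner than your sketch: for any $v<n$ the induced subhypergraph on $v$ vertices is a disjoint union of $\ell$-overlapping paths, and a single path on $v$ vertices has exactly $(v-\ell)/(r-\ell)$ edges, so $e_H(v)\le (v-\ell)/(r-\ell)$; then one checks the elementary inequality $\tfrac{v-\ell}{(r-\ell)(v-2)}\le \tfrac{n}{(r-\ell)(n-2)}$, which is precisely where $\ell\ge 2$ is used. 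This shows the whole cycle is the maximizer, settling the ``main obstacle'' you flagged without any asymptotic fudge. The paper then notes $n^{2(r-\ell)/n}\to 1$, which is exactly the observation that $n^{-1/\gamma(H)}\sim n^{\ell-r}$, so $p=\omega(n^{\ell-r})$ is the same as $p=\omega(n^{-1/\gamma(H)})$.

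One genuine slip in your parenthetical alternative: you write ``larger $\gamma$ only makes condition~\eqref{eq_npinfty} easier'', but since $p<1$, larger $\gamma$ makes $p^{\gamma}$ \emph{smaller} and hence condition~\eqref{eq_npinfty} \emph{harder}. What you actually need (and what the paper provides) is the exact value $\gamma(H)=\tfrac{n}{(r-\ell)(n-2)}$, which is only infinitesimally larger than $1/(r-\ell)$; your direct verification $n\cdot(\omega(n^{\ell-r}))^{1/(r-\ell)}=\omega(1)$ then goes through once you observe $p^{\gamma(H)-1/(r-\ell)}=p^{O(1/n)}\to 1$.
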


\begin{proof}
Denote by $\Crl$ an $\ell$-overlapping Hamilton cycle on $n$ vertices. It is not difficult to see that $\gamma(\Crl)=\frac{n}{(r-\ell)(n-2)}$. Indeed, let $V \subseteq [n]$ be a set of size $v<n$. Then $\Crl[V]$ is a union of vertex-disjoint $\ell$-overlapping paths, where an $\ell$-overlapping path of length $s$ consists of $s(r-\ell)+\ell$ ordered vertices and edges are consecutive segments intersecting in $\ell$ vertices. This gives: $e(\Crl[V])\le (v-\ell)/(r-\ell)$ and from 
$\frac{v-\ell}{(r-\ell)(v-2)}\le \frac{n}{(r-\ell)(n-2)}$ we get $\gamma(\Crl)=\frac{n}{(r-\ell)(n-2)}$. 

Since $e(\Crl)>n/r$, $\Delta(\Crl)=\lceil\frac{r}{r-\ell}\rceil$ and $n^{2(r-\ell)/n}\rightarrow 1$, Corollary~\ref{Cor_theom} implies the statement.
 \end{proof}

\subsection{Cube-hypergraphs}
The $r$-uniform $d$-dimensional cube-hypergraph $Q^{(r)}(d)$ was studied in~\cite{Burosch-Ceccherini_cube-hypergraphs} and its vertex set is $V:=[r]^d$ and its hyperedges are $r$-sets of the vertex set $V$ that all differ in one coordinate. Thus, $Q^{(r)}(d)$ has $r^d$  vertices, $dr^{d-1}$ edges and is $d$-regular. In the case $r=2$ this is the usual definition of the (graph) hypercube. The following corollary is a direct consequence of Corollary~\ref{Cor_theom2}. 

\begin{corollary}\label{hypercubes}
For all integers $r\ge 2$, $\eps>0$ and $p=r^{-r}+\eps$ it holds $\PP(\Qrd\subseteq \cH^{({r})}(r^d,p))$ tends to $1$ as $d$ tends to infinity. On the other hand, $\PP(\Qrd\subseteq \cH^{({r})}(r^d,r^{-r}))\rightarrow 0$ as $d\rightarrow\infty$.
\end{corollary}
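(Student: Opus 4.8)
The plan is to obtain Corollary~\ref{hypercubes} as an immediate specialisation of Corollary~\ref{Cor_theom2}. I would apply that corollary to the sequence of hypergraphs $\Qrd$, indexed by the dimension $d$, so that $n = n(d) = r^d \to \infty$. First I would record the parameters of $\Qrd$: it has $r^d$ vertices, $dr^{d-1}$ edges, and is $d$-regular, so the relevant maximum degree is $\Delta = d$. The key numerical point is that, with $n = r^d$ and $\Delta = d$, the threshold $n^{-r/\Delta}$ appearing in Corollary~\ref{Cor_theom2} is exactly $(r^d)^{-r/d} = r^{-r}$, the value in the statement.

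Next I would verify the two side conditions on $\Delta$ demanded by Corollary~\ref{Cor_theom2}. For the lower bound, $\ln n = d\ln r$, hence $(\ln n)^{1-1/r} = (\ln r)^{1-1/r} d^{1-1/r} = o(d)$ since $1-1/r<1$, so $\Delta = d = \omega((\ln n)^{1-1/r})$. For the upper bound, $n^{1/4} = r^{d/4}$ is exponential in $d$ while $d$ is linear, so $\Delta = d = o(n^{1/4})$. Both hold for every $r\ge 2$ once $d$ is large, and (with $d\ge 2$) also $e(\Qrd) = dr^{d-1} > n/r$, so the hypotheses of Corollary~\ref{Cor_theom2} are met.

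Finally I would feed this into Corollary~\ref{Cor_theom2}. For the positive direction, by monotonicity of the property of containing $\Qrd$ I may assume $\eps$ is small enough that $p = r^{-r}+\eps < 1$; writing $\eps' := \eps\,r^{r} > 0$ gives $p = (1+\eps')r^{-r} = (1+\eps')n^{-r/\Delta}$, and Corollary~\ref{Cor_theom2} yields $\Qrd \subseteq \cH^{(r)}(r^d,p)$ a.a.s. For the negative direction, $p = r^{-r} = n^{-r/\Delta}$ lies in the range $p\le n^{-r/\Delta}$, so the first-moment half of Corollary~\ref{Cor_theom2} gives $\PP(\Qrd\subseteq \cH^{(r)}(r^d,r^{-r}))\to 0$.

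Since the whole argument is a direct specialisation, there is no genuine obstacle; the only points needing a little care are the bookkeeping that converts the additive form $p=r^{-r}+\eps$ into the multiplicative form $(1+\eps')n^{-r/\Delta}$ used in Corollary~\ref{Cor_theom2}, and the verification that $\Delta = d$ really does sit strictly between $(\ln n)^{1-1/r}$ and $n^{1/4}$.
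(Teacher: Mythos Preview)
Your proposal is correct and is precisely the paper's approach: the paper states that Corollary~\ref{hypercubes} is a direct consequence of Corollary~\ref{Cor_theom2}, and you have supplied exactly the verification of its hypotheses (the regularity $\Delta=d$, the identification $n^{-r/\Delta}=r^{-r}$, and the side conditions $\Delta=\omega((\ln n)^{1-1/r})$ and $\Delta=o(n^{1/4})$) together with the additive-to-multiplicative rewriting of $p$.
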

We remark that, in the case $r=2$, Riordan~\cite{Riordan} proved even better dependence of $\eps$ on $d$, and similar dependence can be shown for $r>2$.
\subsection{Lattices}
Another example considered in~\cite{Riordan} was the graph of the lattice $L_k$, whose vertex set is $[k]^2$ and two vertices are adjacent if their Euclidean distance is one. There it is shown that $p=n^{-1/2}$ is asymptotically the threshold. One can view $L_k$ as the cubes $Q^{(2)}(2)$ (these are cycles $C_4$) glued `along' the edges.  We define the $\ell$-overlapping hyperlattice $\Lrlk$ as the $r$-uniform hypergraph where we glue together $(k-1)^2$ copies of $Q^{({r})}(2)$ that overlap on $\ell$ hyperedges accordingly. Thus, $L^{(2)}(1,k)$ is just the usual graph lattice $L_k$. 
\begin{corollary} Let $r\ge 2$ and $k$ be an integer.
For $p = \omega\left( n^{-1/2}  \right)$ (where $n=(k-2+r)^2$) the random $r$-uniform hypergraph $\Hnp$ contains  a copy of $L^{(r)}(r-1,k)$ a.a.s. Moreover, for $p=n^{-1/2}$, $\PP(L^{({r})}(r-1,k)\subseteq \Hnp)\rightarrow 0$ as $k$ (and thus $n$) tends to infinity.  
\end{corollary}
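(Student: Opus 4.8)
The plan is to apply Corollary~\ref{Cor_theom} to the hypergraph $H = L^{(r)}(r-1,k)$, in complete analogy with the cube and Hamilton cycle examples, so the entire task reduces to three bookkeeping steps: (i) count $v(H)$ and $e(H)$ and check $e(H) > n/r$; (ii) bound the maximum degree $\Delta(H)$ by a constant; and (iii) compute $\gamma(H)$ and verify it equals $e(H)/(n-2)$, which is what makes Corollary~\ref{Cor_theom} applicable and identifies the expectation threshold. For the second (zero) statement, the first-moment bound $\PP(H\subseteq\Hnp)\le\EE(X)\le n!\,p^{e(H)}$ already built into Corollary~\ref{Cor_theom} gives the claim once we know $e(H)/(n-2)\to 1/2$, since then $p=n^{-1/2}$ is exactly the expectation threshold.

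First I would pin down the combinatorics of $L^{(r)}(r-1,k)$. Gluing a $(k-1)\times(k-1)$ grid of copies of $Q^{(r)}(2)$ along $(r-1)$ overlapping hyperedges in each of the two grid directions should, as the statement indicates, produce a hypergraph on $n=(k-2+r)^2$ vertices: each elementary cell $Q^{(r)}(2)$ contributes an $r\times r$ block of vertices, and consecutive blocks share $r-1$ of their $r$ rows (resp.\ columns), so along one direction $k-1$ blocks cover $r + (k-2)\cdot 1 = k-2+r$ vertices. The number of edges is then linear in $n$ (a bounded number of ``horizontal'' and ``vertical'' hyperedges per vertex), so $e(H) = \Theta(n)$ and in particular $e(H) > n/r$ for $k$ large; I would extract the exact constant so that $e(H)/(n-2)\to 1/2$ can be checked. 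The maximum degree is bounded by a constant depending only on $r$: a vertex lies in at most a bounded number of cells, and in each cell in at most $2$ hyperedges of $Q^{(r)}(2)$, so $\Delta(H)=O(1)$, and $n^{2/(n-2)}\to 1$, so all the hypotheses of Corollary~\ref{Cor_theom} other than the $\gamma$ computation are immediate.

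The real work, and the main obstacle, is the computation $\gamma(H) = e(H)/(n-2)$, i.e.\ showing that the densest subhypergraph on $v$ vertices is achieved (up to the $v-2$ normalization) by the whole of $H$, for every $r+1\le v\le n$. The approach mirrors the Hamilton-cycle and lattice arguments in~\cite{Riordan}: given a vertex subset $W$ with $|W|=v$, I would bound $e_H(H[W])$ by an isoperimetric-type inequality for the two-dimensional hyperlattice, showing $e_H(v) \le \frac{e(H)}{n-2}\,(v-2)$ — informally, that a set of $v$ vertices in a planar-grid-like structure spans at most roughly $v/2$ edges, since the lattice has ``density $1/2$'' and subsets cannot beat the global density by more than a lower-order boundary term absorbed into the $-2$. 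Concretely, one partitions the hyperedges of $H[W]$ into the horizontal and vertical families, observes that each family restricted to $W$ behaves like a union of $(r-1)$-overlapping paths (exactly as in the Hamilton-cycle corollary, where $e(\Crl[V])\le (v-\ell)/(r-\ell)$ was used), and sums the two contributions; the constant-degree structure ensures no double counting beyond a constant factor already accounted for. Once $\gamma(H)=e(H)/(n-2)$ is established and $e(H)/(n-2)\to 1/2$, Corollary~\ref{Cor_theom} with $p=\omega(n^{-1/\gamma(H)})=\omega(n^{-1/2})$ gives the positive statement and its first-moment half gives $\PP(L^{(r)}(r-1,k)\subseteq\Hnp)\to 0$ for $p=n^{-1/2}$, completing the proof.
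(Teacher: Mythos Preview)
There is a genuine gap: you plan to invoke Corollary~\ref{Cor_theom}, whose hypothesis is that $\gamma(H)=e(H)/(n-2)$, i.e.\ that the global density is the maximum. This fails for the lattice. Already for $r=2$ a single $2\times 2$ square (a $C_4$) has $4$ edges on $4$ vertices, so $e_H(4)/(4-2)=2$, while $e(H)/(n-2)=2k(k-1)/(k^2-2)<2$; hence $\gamma(L_k)=2>e(H)/(n-2)$ and Corollary~\ref{Cor_theom} is simply not applicable. (Relatedly, your ``density $1/2$'' is inverted: $e(H)/(n-2)\to 2$, not $1/2$, which is why the threshold exponent $1/\gamma$ equals $1/2$.) Your path-decomposition bound, summing $(v_i-(r-1))_+$ over rows and columns, would at best give $e_H(v)\le 2v-(\text{boundary})$, which does not yield $e_H(v)/(v-2)\le e(H)/(n-2)$ either.

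The paper sidesteps this by not using Corollary~\ref{Cor_theom} at all. It proves the clean uniform bound $e_L(v)\le 2(v-r)$ for all $v\ge r+1$, hence $\gamma(L)\le 2$, and then applies Theorem~\ref{theorem_main} directly: with $p=\omega(n^{-1/2})$ one has $np^{\gamma(L)}\ge np^{2}\to\infty$. The bound $e_L(v)\le 2(v-r)$ is obtained by a short induction rather than a path decomposition: in any $L'\subseteq L$ on $v+1\le n$ vertices, the lexicographically maximal vertex $(i,j)$ with $(i+1,j),(i,j+1)\notin V(L')$ has degree at most~$2$, so $e_L(v+1)\le e_L(v)+2$, and the base case $e_L(r+1)\le 2$ is immediate. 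The zero statement at $p=n^{-1/2}$ is then a direct first-moment computation using $e(H)=2(k-1)(k-2+r)\sim 2n$. So the fix is to replace Corollary~\ref{Cor_theom} by Theorem~\ref{theorem_main} and to aim only for $\gamma(L)\le 2$, which the degree-$2$-vertex induction gives in two lines.
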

\begin{proof}
 Observe that $L:=L^{(r)}(r-1,k)$ has $(k-2+r)^2$ vertices (which can be associated with $[k-2+r]^2$) and $2(k-1)(k-2+r)$ edges. 
 
 We aim to show that $e_L(v) \le 2 (v- r)$ for all $v \ge r+1$. We argue similarly as in~\cite{Riordan}. Observe that $e_L(v) \le 2$ for $v=r+1$. Let now $L'$ be an arbitrary subhypergraph of $L$ on $v+1\le (k-2+r)^2$ vertices such that $e(L')=e_L(v+1)$. It is easy to see that there is a vertex of degree $2$ in $L'$ (take $(i,j)$ such that $(i+1,j)$, $(i,j+1)\not\in V(L')$). It follows that then $e_L(v+1) \le e_L(v) + 2$ for $v > r+1$ giving  $e_L(v) \le 2 (v- r)$ for all $v \ge r+1$.

It follows that  $\gamma(L) \le 2$ and applying Theorem~\ref{theorem_main} with $n p^{\gamma} = \omega(1)$ yields the first part. Markov's inequality yields the second part.  
\end{proof}

\subsection{Spheres}
Let $r\ge 3$ and let $G$ be a planar graph on $n$ vertices with a drawing all of whose faces are cycles of length $r$. We define 
a sphere $S^r_n$ as an $r$-uniform hypergraph all of whose edges correspond to the faces of that particular drawing (note that a sphere is not unique). 
Observe that we get from Euler's formula for planar graphs the condition $2 v(\Srn) - 4=(r-2) e(\Srn)$. 

\begin{corollary}
Let $r \ge 3$ and $S$ be some sphere $\Srn$ with $\Delta=\Delta(\Srn)$. Then for $p=\omega \left( \Delta^{2r-4}  n^{-(r-2)/2} \right)$ the random $r$-uniform hypergraph $\Hnp$ contains a copy of $S$ a.a.s.
\end{corollary}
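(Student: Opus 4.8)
The plan is to apply Theorem~\ref{theorem_main} with $H=S$. Here $v(S)=n$, and Euler's formula $2v(S)-4=(r-2)e(S)$ yields $e(S)=\frac{2(n-2)}{r-2}$; hence the first condition in~\eqref{eq_theocond}, namely $e(S)>n/r$, is equivalent to $n(r+2)>4r$ and holds for all large $n$, while the second, $p\binom{n}{r}\to\infty$, follows from $p=\omega(\Delta^{2r-4}n^{-(r-2)/2})=\omega(n^{-(r-2)/2})$ and $\binom{n}{r}=\Theta(n^r)$. It remains to verify~\eqref{eq_npinfty}, and for this the key is the bound $\gamma(S)\le\frac{2}{r-2}$.

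To establish this I claim $e_S(v)\le\frac{2(v-2)}{r-2}$ for every $r+1\le v\le n$. Let $G$ denote the plane graph underlying $S$, so that the faces of its drawing are exactly the hyperedges of $S$, each being an $r$-cycle. Fix $v$ and pick $F\subseteq S$ with $e(F)=e_S(v)$; after deleting isolated vertices we may assume $F$ has no isolated vertex, and we write $v'=v(F)\le v$. Let $G_F\subseteq G$ be the union of the $r$-cycles bounding the faces of $G$ that are hyperedges of $F$, with $v'$ vertices, $m$ edges and $c$ components. Each hyperedge of $F$ is a face of the drawing of $G$, so it contains no vertex or edge of $G$ (in particular none of $G_F$) in its interior and is therefore also a face of $G_F$; since distinct hyperedges are distinct faces of $G$, we get that $G_F$ has $f=e(F)+t$ faces for some $t\ge0$, and Euler's formula $v'-m+f=1+c$ gives $m=v'+e(F)+t-1-c$. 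Moreover $G_F$ is simple and bridgeless (every edge lies on one of the bounding $r$-cycles), so every face of $G_F$ has boundary length at least $3$, whence $2m\ge r\,e(F)+3t$ by double counting edge-face incidences. Eliminating $t$ and using $c\ge1$ yields $m\le 3(v'-2)+(3-r)e(F)$, and combining with $2m\ge r\,e(F)$ gives $e(F)\cdot\frac{3(r-2)}{2}\le 3(v'-2)$, i.e.\ $e(F)\le\frac{2(v'-2)}{r-2}\le\frac{2(v-2)}{r-2}$. Therefore $\gamma(S)=\max_{r+1\le v\le n}\frac{e_S(v)}{v-2}\le\frac{2}{r-2}$.

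Now write $p=g(n)\,\Delta^{2r-4}\,n^{-(r-2)/2}$ with $g(n)\to\infty$. Using $p<1$ (so $p^{\gamma(S)}\ge p^{2/(r-2)}$) and $(2r-4)\cdot\frac{2}{r-2}=4$,
\begin{align*}
n\,p^{\gamma(S)}\,\Delta^{-4}\ \ge\ n\,p^{2/(r-2)}\,\Delta^{-4}
&= n\cdot g(n)^{2/(r-2)}\,\Delta^{4}\,n^{-1}\,\Delta^{-4}\\
&= g(n)^{2/(r-2)}\ \longrightarrow\ \infty,
\end{align*}
so~\eqref{eq_npinfty} holds and Theorem~\ref{theorem_main} implies $S\subseteq\Hnp$ a.a.s. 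I expect the only delicate step to be the inequality $\gamma(S)\le\frac{2}{r-2}$: one must argue carefully about the induced sub-drawing $G_F$ — that the hyperedges of $F$ are genuinely faces of $G_F$, that $G_F$ has no face of length $1$ or $2$, and keep track of the possibly several components of $G_F$ — whereas everything else is routine; in particular no growth restriction on $\Delta=\Delta(S)$ is needed beyond what~\eqref{eq_npinfty} already enforces (compare Lemma~\ref{lem:estimates}).
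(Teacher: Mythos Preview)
Your proof is correct and follows the same approach as the paper: establish $\gamma(S)\le 2/(r-2)$ and then invoke Theorem~\ref{theorem_main}. The paper's own proof of this corollary is extremely brief --- it simply asserts ``From Euler's formula it follows that $e_S(v)\le \frac{2v-4}{r-2}$'' and then applies Theorem~\ref{theorem_main} --- whereas you supply the details the paper omits, namely how to extract the bound $e_S(v)\le\frac{2(v-2)}{r-2}$ from Euler's formula by passing to the induced sub-drawing $G_F$, keeping track of components, and lower-bounding the lengths of the non-hyperedge faces. Your argument there (simple $+$ bridgeless $\Rightarrow$ every face has length $\ge 3$, then eliminate $t$ via $v'-m+f=1+c$ and $2m\ge r\,e(F)+3t$) is sound; the only point one might add for tidiness is that the case $e(F)=0$ is trivial and otherwise $v'\ge r\ge3$ and $c\ge1$, so the inequalities make sense. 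In short: same route, with the Euler step made rigorous rather than asserted.
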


\begin{proof} 
From Euler's formula it follows that $e_S(v)\le \frac{2v-4}{r-2}$ and therefore $\gamma(S)=2/(r-2)$. 
Since this is an upper bound for the number of $r$-edges in this induced hypergraph we immediately get $\gamma=2/(r-2)$. The statement follows now directly from Theorem~\ref{theorem_main}. 
\end{proof}

\subsection{Powers of $(r-1)$-overlapping Hamilton cycles}
 Consider an  $(r-1)$-overlapping Hamilton cycle $C^{(r,r-1)}$ with $n$ vertices which are ordered cyclically. Given an integer $i$, we define an $i$-th power $C^{(r)}(i)$ of $C^{(r,r-1)}$ to consist of all $r$-tuples $e$ such that the maximum distance in this cyclic ordering between any two vertices in $e$ is at most $r+i-2$. In the graph case, the threshold for the appearance of $C^{(2)}(i)$ follows from Riordan's result~\cite{Riordan} for $i\ge 3$ (see~\cite{KO12}) and in the case $i=2$  an approximate threshold  due to K\"uhn and Osthus~\cite{KO12} is known.  If we count the edges of $C^{(r)}(i)$ by their leftmost vertex we get $e(C^{(r)}(i))=n \binom{r+i-2}{r-1}$. 

\begin{theorem} Let $r\ge 3$ and $i\ge 2$ be integers. 
Suppose that $p = \omega (n^{-1/\binom{r+i-2}{r-1}})$, then the random hypergraph $\Hnp$ contains a.a.s. a copy of $C^{(r)}(i)$. This threshold is asymptotically optimal. 
\end{theorem}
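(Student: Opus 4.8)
The plan is to realise $C^{(r)}(i)$ as a hypergraph to which Corollary~\ref{Cor_theom} applies directly, so that the positive statement and its optimality fall out together. For the sequence $H=C^{(r)}(i)$ on $n$ vertices I must verify three things: that $\Delta(H)$ is a fixed integer $\ge 2$, that $e(H)>n/r$, and — the only real content — that $\gamma(H)=e(H)/(n-2)=n\binom{r+i-2}{r-1}/(n-2)$.

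The first two points are immediate. The cyclic construction is invariant under rotation, so every vertex has the same degree and $\Delta(C^{(r)}(i))\cdot n=\sum_v\deg(v)=r\,e(C^{(r)}(i))=rn\binom{r+i-2}{r-1}$, giving the constant $\Delta(C^{(r)}(i))=r\binom{r+i-2}{r-1}\ge r^2\ge 2$; and $e(C^{(r)}(i))=n\binom{r+i-2}{r-1}\ge rn>n/r$. For $\gamma$, write $W_x$ for the set of $r+i-1$ consecutive vertices of the cyclic order starting at $x$; every edge of $C^{(r)}(i)$ is an $r$-subset of the window $W_\ell$ determined by its leftmost (in the cyclic-arc sense) vertex $\ell$, which yields the exact identity $e_{C^{(r)}(i)}(V)=\sum_{x\in V}\binom{|W_x\cap V|-1}{r-1}$ for every $V\subseteq[n]$. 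A shifting argument shows that, for fixed $|V|=v$, this is maximised when $V$ consists of $v$ consecutive vertices (on a cycle, the complement of an interval is again an interval, which covers the near-spanning case). For $v$ bounded away from $n$ a short computation — distinguishing whether $V$ fits inside one window, in which case $e_{C^{(r)}(i)}(v)=\binom vr$, or not, in which case the sum telescopes to $(v-r-i+2)\binom{r+i-2}{r-1}+\binom{r+i-2}{r}$ — combined with the elementary inequality
\[
\binom{r+i-2}{r}=\frac{i-1}{r}\binom{r+i-2}{r-1}<(r+i-4)\binom{r+i-2}{r-1},
\]
which is valid for all $r\ge3$, $i\ge2$ because it rearranges to $r^2-4r+1+i(r-1)>0$, gives $e_{C^{(r)}(i)}(v)/(v-2)<\binom{r+i-2}{r-1}<n\binom{r+i-2}{r-1}/(n-2)$. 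For the remaining range $n-(r+i-1)<v<n$ one writes $e_{C^{(r)}(i)}(v)=e(C^{(r)}(i))-|\{\text{edges meeting the missing set}\}|$, where the missing set has constant size $n-v$; a direct count of the edges through an interval of $n-v$ vertices (using $\deg=r\binom{r+i-2}{r-1}$ and the small overlaps between the edge-neighbourhoods of nearby vertices) again yields $e_{C^{(r)}(i)}(v)/(v-2)\le n\binom{r+i-2}{r-1}/(n-2)$. Since equality holds at $v=n$, we conclude $\gamma(C^{(r)}(i))=e(C^{(r)}(i))/(n-2)$.

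Now Corollary~\ref{Cor_theom} applies. From $\gamma(C^{(r)}(i))=\binom{r+i-2}{r-1}\cdot\frac{n}{n-2}$ we get $\frac{1}{\gamma(C^{(r)}(i))}=\frac{1}{\binom{r+i-2}{r-1}}\bigl(1-\frac2n\bigr)$, so $n^{-1/\gamma(C^{(r)}(i))}=(1+o(1))\,n^{-1/\binom{r+i-2}{r-1}}$ and likewise $(e/n)^{1/\gamma(C^{(r)}(i))}=(1+o(1))(e/n)^{1/\binom{r+i-2}{r-1}}$; hence the first half of Corollary~\ref{Cor_theom} gives $C^{(r)}(i)\subseteq\Hnp$ a.a.s.\ whenever $p=\omega(n^{-1/\binom{r+i-2}{r-1}})$, while its second half (equivalently, the first-moment bound $\EE[X]\le n!\,p^{n\binom{r+i-2}{r-1}}\to0$ for $X$ the number of copies) shows $\PP(C^{(r)}(i)\subseteq\Hnp)\to0$ once $p$ drops below a fixed multiple of $n^{-1/\binom{r+i-2}{r-1}}$. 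Thus the threshold is of order $n^{-1/\binom{r+i-2}{r-1}}$, which is the claimed asymptotic optimality.

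I expect the main obstacle to be the exact evaluation of $\gamma(C^{(r)}(i))$: one must confirm that no proper induced subhypergraph is denser than the whole cycle, and this is delicate precisely in the near-spanning regime where wrap-around edges appear and the naive segment count is wrong. It is also where the hypothesis $r\ge3$ enters — in contrast with $r=2$, where the square of a Hamilton cycle has a subgraph on three vertices of density $3>2$, so that $\gamma\ne e(H)/(n-2)$ and Corollary~\ref{Cor_theom} does not apply.
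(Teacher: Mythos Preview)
Your approach is correct and in fact more detailed than the paper's, but it is also more laborious than necessary. The paper does not go through Corollary~\ref{Cor_theom}; it invokes Theorem~\ref{theorem_main} directly, for which the weaker bound
\[
\gamma\bigl(C^{(r)}(i)\bigr)\le \binom{r+i-2}{r-1}+O_{r,i}(1/n)
\]
already suffices, since $p^{O(1/n)}\to 1$. That weaker bound is almost free: for $v$ with $n-v$ bounded (your ``near-spanning'' range) one simply uses $e_H(v)\le e(H)=n\binom{r+i-2}{r-1}$ and $v-2\ge n-O_{r,i}(1)$ to get $e_H(v)/(v-2)\le \binom{r+i-2}{r-1}(1+O(1/n))$, while for the remaining $v$ your path computation gives the exact bound $\binom{r+i-2}{r-1}$. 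Optimality then follows, as you note, from Markov's inequality. The paper just points to Proposition~8.2 of K\"uhn--Osthus for the analogous graph calculation and omits all details.

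By insisting on the exact identity $\gamma=e(H)/(n-2)$ so that Corollary~\ref{Cor_theom} applies, you force yourself to deal carefully with the wraparound regime $n-(r+i-1)<v<n$, which you correctly flag as the delicate part. Your sketch there is sound --- writing $D_k$ for the number of edges meeting a $k$-interval, one computes $D_k-ak=(r-1)\bigl[\binom{r+i-1}{r}-\binom{r+i-1-k}{r}\bigr]>0$ for $k\ge 1$ (here $a=\binom{r+i-2}{r-1}$), which is exactly the inequality $(n-2)D_k\ge nak$ needed --- but it is extra work that the paper's route avoids. Your diagnosis of why $r\ge 3$ is essential (the dense triangle in the $r=2$, $i=2$ case) is also correct. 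In short: same destination, but the paper takes the shortcut of applying Theorem~\ref{theorem_main} with an approximate $\gamma$, whereas you earn the exact $\gamma$ and then cash it in via the corollary.
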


\begin{proof}
One can argue similarly to Proposition~8.2 in~\cite{KO12} to show $\gamma(C^{(r)}(i))\le \binom{r+i-2}{r-1}+O_{r,i}(1/n)$. The statement follows from Theorem~\ref{theorem_main}. We omit the details.
\end{proof}

\section{Proof of Theorem~\ref{theom_universality}}\label{sec:proof_universality}

\subsection{Outline}
Our proof follows a similar strategy as the one of Dellamonica, Kohayakawa, R{\"o}dl and Ruci{\'n}ski~\cite{DKRR15} 
for universality in random graphs, but to verify good properties of a random hypergraph necessary for embeddings we combine it with the recent approach of Ferber, Nenadov and Peter~\cite{ferber2013universality} who studied random graphs as well.

We will embed any bounded degree hypergraph $F\in\FnD$ into the random hypergraph $\cH=\cH^{({r})}\left(n,p\right)$ with $p=C(\ln n/n)^{1/\Delta}$ in stages. For this we will partition most of the vertices of $F$ into $3$-independent sets $X_1$,\ldots, $X_t$  (this is achieved by coloring greedily the third power of the shadow graph of $F$) and the remaining vertices will form the set of linear size $N_F(X_t)$, where the hypergraphs $F[N_F(x)]$ and the link of $x$ in $F$ look the same for all $x\in X_t$. The random hypergraph $\cH$ is then prepared as follows: the vertex set of $\cH$ is partitioned into sets $V_0$, $V_1$,\ldots, $V_t$ where ``most'' of the vertices lie in $V_0$. The property that we use first is that one can embed into $\cH[V_0]$ the induced hypergraph $F[N_F(X_t)]$ for \emph{any} $F\in \FnD$ so that the restrictions on future images for $\cup_{i\in[t]}X_i$ still offer many choices. In later rounds, we embed each $X_i$ into available vertices from $V_0\cup \bigcup_{j\le i} V_j$ by Hall's condition ($X_i$s are $3$-independent). In order to verify Hall's condition for small subsets of $X_i$ the sets $V_i$ will assist us in this.

\subsection{Proof of Theorem~\ref{theom_universality}}

Let $H=(V,E)$ be an $r$-uniform hypergraph.
The link of $v$ in $H$ is a subset of $V \choose r-1$ consisting of all $(r-1)$-sets of vertices which form an edge together with $v$
\begin{align*}
 \link_H(v) = \left\{ e^\prime \in \binom{V}{r-1} : e^\prime \cup \{ v \} \in E \right\}.
\end{align*}
For a hypergraph $H$ and a vertex $v$ we define its \emph{profile} $P_H(v)$ in $H$ as follows
\[
P_H(v)=(N_H(v),E(H[N_H(v)]),\link_H(v)) 
\]
and say that two profiles $P_H(v_1)$ and $P_H(v_2)$ are equivalent ($P_H(v_1) \cong P_H(v_2)$) if there is an isomorphism $\varphi$  that takes $H[N_H(v_1)]$ to $H[N_H(v_2)]$ and $(N_H(v_1),\link_H(v_1))$ to $(N_H(v_1),\link_H(v_2))$.
We call $N_H(v)$ the vertices of the profile.

Let $\PnD$ be the set of all possible profiles $(Z,E_1,E_2)$ that we encounter for any $F \in \cup_{n\in\Nat}\FnD$ (up to equivalence). 
Then any $|Z|\le (r-1)\Delta$, $(Z,E_1)$ is an $r$-uniform hypergraph with maximum degree $\Delta-1$ and $(Z,E_2)$ is an $(r-1)$-uniform hypergraph with at most $\Delta$ edges and without isolated vertices.
It is not difficult to bound $|\PnD|$ by a function exponential is some polynomial  in $\Delta$, but since $\Delta$ is a constant, all we will care about is that $|\PnD|$ is a constant as well that depends on $\Delta$ only.

The following lemma prepares any $F\in\FnD$ for future embedding into $\Hnp$.

\begin{lemma}
\label{lem_partition}
 Let $r \ge 2$ and $\Delta \ge 1$ be integers. Then for $t=r^3\Delta^3$, any $\eps \le |\PnD|^{-1}(t-1)^{-1}$ and any $F \in \FnD$ there exists a partition of $V(F)$ in $X_0 \cup \dots \cup X_t$ (where some $X_i$s might be empty) with the following conditions:
\begin{enumerate}
 \item $|X_t|=\eps n$, $X_0 = N_H(X_t)$,\label{prop:X0}
 \item every $x \in X_t$ has the same profile $P_F(x)$ (up to equivalence), \label{prop:profiles}
 \item and $X_i$ is $3$-independent for $i=1,\dots,t$.\label{prop:indep}
\end{enumerate}
\end{lemma}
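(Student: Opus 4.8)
The plan is to construct the partition in three phases, handling conditions~\eqref{prop:X0}--\eqref{prop:indep} essentially one at a time.

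\textbf{Phase 1: choosing $X_t$ with a common profile.} For each $x\in V(F)$ the profile $P_F(x)$ lies in $\PnD$, which is a set of constant size depending only on $\Delta$. By pigeonhole there is some profile $\pi\in\PnD$ realized by at least $n/|\PnD|$ vertices of $F$. Next I want to pass to a subset of these vertices that is additionally $3$-independent in $F$; since $F$ has maximum degree $\Delta$, the third power of the shadow graph $\SHp$ of $F$ has maximum degree at most some constant $D=D(r,\Delta)$ (indeed at most $\bigl((r-1)\Delta\bigr)^3$ or so), so greedily we can extract an independent set in that power graph of size at least $\frac{1}{D+1}\cdot\frac{n}{|\PnD|}$. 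Choosing $\eps\le|\PnD|^{-1}(t-1)^{-1}$ as in the hypothesis — and noting $t-1\ge D+1$ holds for the stated $t=r^3\Delta^3$ after adjusting constants, which is the one numerical point to check — we can take exactly $\eps n$ of these vertices to be $X_t$. Then $X_t$ is $3$-independent and all its vertices share the profile $\pi$, giving conditions~\eqref{prop:profiles} and the $3$-independence part of~\eqref{prop:X0}.

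\textbf{Phase 2: defining $X_0$.} Simply set $X_0:=N_F(X_t)$, as demanded by~\eqref{prop:X0}. Since every vertex in $X_t$ has degree at most $\Delta$ and each edge has $r$ vertices, $|X_0|\le (r-1)\Delta|X_t|=(r-1)\Delta\eps n$, so $V(F)\setminus(X_t\cup X_0)$ still has linear size; this is what leaves room for the remaining colour classes. Note $X_t\cap X_0=\emptyset$ because $X_t$ is $3$-independent (in fact $1$-independent suffices here).

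\textbf{Phase 3: partitioning the rest into $3$-independent sets.} It remains to partition $W:=V(F)\setminus(X_0\cup X_t)$ into $X_1,\dots,X_{t-1}$ each $3$-independent in $F$. Equivalently, I $(t-1)$-colour the vertices of $W$ so that each colour class is an independent set in the third power of the shadow graph $\SHp[W]$ (or of $\SHp$ restricted appropriately). As above this power graph has maximum degree at most $D\le t-2$, so it is greedily $(t-1)$-colourable; assigning the colour classes to $X_1,\dots,X_{t-1}$ (some possibly empty) completes the partition. The main obstacle — really the only nontrivial point — is the bookkeeping that the single constant $t=r^3\Delta^3$ simultaneously (i) exceeds the degree bound $D$ of the third power of the shadow graph, so that both the greedy extraction in Phase~1 and the greedy colouring in Phase~3 go through, and (ii) is compatible with the constraint $\eps\le|\PnD|^{-1}(t-1)^{-1}$ so that $X_t$ of size exactly $\eps n$ can be carved out of the pigeonhole class while keeping it $3$-independent. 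Everything else is routine greedy colouring.
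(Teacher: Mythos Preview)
Your proposal is correct and uses exactly the same ingredients as the paper --- pigeonhole on profiles, the bounded maximum degree of the third power of the shadow graph, and greedy colouring --- but applies them in a different order. The paper first $(t-1)$-colours the third power of the shadow graph $F'$ globally into $3$-independent classes $Y_1,\dots,Y_{t-1}$, then applies pigeonhole on profiles \emph{within} the largest class $Y_{t-1}$ (which has size at least $n/(t-1)$) to carve out $X_t\subseteq Y_{t-1}$, and finally sets $X_0=N_F(X_t)$ and $X_i=Y_i\setminus X_0$ for $i<t$. This single global colouring replaces your two separate greedy steps (extracting a $3$-independent $X_t$ from the common-profile vertices in Phase~1, then colouring the remainder in Phase~3), and makes the numerical check $\Delta\big((F')^3\big)\le (r-1)^3\Delta^3\le t-2$ appear only once; but the content is the same and the constant $t=r^3\Delta^3$ works for both routes. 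One cosmetic point: the macro $\SHp$ in this paper denotes the sum $S_H'$ from Section~\ref{sec_proof}, not the shadow graph, which is written $F'$.
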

\begin{proof}
 Let $F\in\FnD$ be given and $G$ be its shadow graph.  
  The third power $G^3$ of $G$ is the graph which we obtain by connecting any pair of vertices of distance at most $3$ by an edge. We estimate the maximum degree of $G^3$ as follows: $\Delta(G^3)\le (r-1)^3\Delta^3$. Clearly, $G^3$ is $(t-1)$-colorable and let $Y_1$,\ldots, $Y_{t-1}$ be the color sets in some coloring of $V(G^3)$ such that $|Y_1|\le |Y_2|\le \ldots \le |Y_{t-1}|$.  The sets $Y_i$ are $3$-independent in $F$ as well because the shadow of a path of length $3$ in $F$ contains a path of length $3$ in $G$, which gives an edge in $G^3$ in contradiction to the  coloring above.

 We can choose a subset $X_{t} \subseteq Y_{t-1}$ of size $\eps n \le n|\PnD|^{-1}(t-1)^{-1}$ of vertices with the same profile in $F$ (up to equivalence). 
 We set $X_0:=N_F(X_t)$ and define $X_i=Y_i \setminus X_0$ for $i=1,\dots,t-2$ and $X_{t-1}=Y_{t-1} \setminus (X_0\cup X_{t})$. The partition $V(F)=X_0 \cup \dots \cup X_t$ satisfies the required conditions.

\end{proof}
  Given a partition of $V(F)$ from the above lemma, it follows from properties~\eqref{prop:X0}--\eqref{prop:indep} that $F[X_0]$ is the disjoint union of $\eps n$ copies of the same $r$-uniform hypergraph isomorphic to $F[N_F(x)]$ for all $x \in X_t$. Furthermore, the third condition implies that any edge $e \in E(H)$ intersects each $X_i$ in at most one vertex for $i=1$,\ldots, $t$.

Let $H=(V,E)$ be an $r$-uniform hypergraph. 
Let $\mathcal{L}$ be a family of pairwise disjoint $k$-subsets of $V \choose r-1$ and we write $V(\mathcal{L})$ for $\cup_{e\in L: L \in \mathcal{L}} e$. For a subset $W \subseteq V \setminus V(\cL)$ we define the auxiliary bipartite  graph $B(H,\mathcal{L},W)$  with the vertex classes $\cL$ and $W$, where $L \in \cL$ and $w \in W$ form an edge if and only if $L \subseteq \link_H(w)$. Roughly speaking, for every unembedded $x\in V(F)$ the set $L=L_x\in\cL$ will consist of  the images of the already fully embedded $(r-1)$-sets from the  $\link_F(x)$ and the following definition which resembles the one of good graphs from~\cite{ferber2013universality} provides  essential properties that will assist us while embedding $F$ into $\Hnp$.

\begin{definition}
\label{def_good}
 We say that an $r$-uniform hypergraph $H$ is $(n,r,p,t,\eps,\Delta)$-good if there exists a partition $V(H)=V_0 \cup V_1 \cup \dots \cup V_t$, where $|V_i|=\eps n/(10t)$ for $i=1,\dots,t$, and $|V_0|=(1-\eps/10)n$ that satisfies the following conditions:
\begin{enumerate}
\item \label{prop_1}
For any profile $(Z,E_1,E_2) \in \PnD$ there exists a family $\mathcal{F}$ of $\eps n$ vertex-disjoint copies of the profile $(Z,E_1,E_2)$ with vertices in $V_0$ and edges $E_1$ present in $H$.
This family induces a family $\mathcal{F}_2$ of pairwise disjoint copies of $E_2$ in $V_0 \choose r-1$.
Furthermore, for any $W \subseteq V(H) \setminus V(\mathcal{F}_2)$ with $|W| \le (p/2)^{-\Delta}/2$ 
\begin{align*}
 |N_{B(H,\mathcal{F}_2,W)}(W)| \ge (p/2)^\Delta |W| \eps n/4
\end{align*}
holds.

 \item \label{prop_2}
 Let $1 \le k \le \Delta$ and $\mathcal{L}$ be any collection of disjoint $k$-subsets of ${V(H) \choose r-1}$. 
If $|\mathcal{L}| \le (p/2)^{-k} / 2$, then for any $i=1,\dots,t$ with $V(\mathcal{L}) \cap V_i = \emptyset$ we have
\begin{align*}
 |N_{B(H,\mathcal{L},V_i)}(\mathcal{L})| \ge (p/2)^k |\mathcal{L}| \, |V_i|/4.
\end{align*}

\item \label{prop_3}
Let $1 \le k \le \Delta$ and $\mathcal{L}$ be any collection of disjoint $k$-subsets of ${V(H) \choose r-1}$. 
If $|\mathcal{L}| \ge C^\prime (p/2)^{-k} \ln n$, then for any $W \subseteq V(H) \setminus V(\mathcal{L})$ with $|W|\ge C^\prime (p/2)^{- k} \ln n$ the graph $B(H,\mathcal{L},W)$ has at least one edge, where $C^\prime=k(r-1)+2$.
\end{enumerate}
\end{definition}

The following two lemmas establish the connection between $\Hnp$, good hypergraphs and $\FnD$-universality.

\begin{lemma}
\label{lem_Hnpgood}
 For integers $r\ge 2$, $\Delta \ge 1$, $t \ge 1$  and $\eps \le 1/(r \Delta)$, there exists a $C>0$ such that for $p \ge C \left( \ln n / n \right)^{1/\Delta}$ the random  hypergraph $\Hnp$ is $\left( n,r,p,t,\eps,\Delta \right)$-good a.a.s.
\end{lemma}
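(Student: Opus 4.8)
The plan is to fix once and for all an arbitrary partition $V(H)=V_0\cup V_1\cup\dots\cup V_t$ with $|V_i|=\eps n/(10t)$ for $i\ge 1$ and $|V_0|=(1-\eps/10)n$, so that all randomness lies in the edges of $\Hnp$, and then to verify the three conditions of Definition~\ref{def_good} separately, each failing with probability $o(1)$; the constant $C$ in $p=C(\ln n/n)^{1/\Delta}$ is chosen large only at the very end. The observation driving all three estimates is that the neighbourhood sizes in Definition~\ref{def_good} are, in every case, sums of \emph{independent} indicator variables: if $L,L'$ are vertex-disjoint systems of $(r-1)$-sets and $w\neq w'$ lie outside $V(L)\cup V(L')$, then the events $\{L\subseteq\link_H(w)\}$ and $\{L'\subseteq\link_H(w')\}$ depend on disjoint sets of edges of $\Hnp$ --- no edge can contain two vertices of a single $V_i$ (by $3$-independence) nor two vertices from two disjoint copies --- so Chernoff bounds apply directly and the real work is in the union bounds. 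I also record the elementary fact that $1-(1-p^j)^{\ell}\ge\ell(p/2)^j$ whenever $\ell(p/2)^j\le 1/2$, which will lower bound the relevant expectations.

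For condition~\eqref{prop_1}, since $|\PnD|$ is a constant depending only on $r$ and $\Delta$ it suffices to handle one profile $(Z,E_1,E_2)$. I would split the edges of $\Hnp$ into two independent rounds --- a sparse round of probability $p_1\ll p$ still above the appearance threshold of the bounded, maximum-degree-at-most-$(\Delta-1)$ hypergraph $(Z,E_1)$, and a round of probability $p_2\ge(1-o(1))p$ --- and use the first round to build the family $\cF$ of $\eps n$ vertex-disjoint copies of the profile inside $V_0$ greedily: at each of the $\eps n$ steps Janson's inequality produces a fresh copy of $(Z,E_1)$ among the $\Omega(n)$ still-unused vertices of $V_0$ except with probability $\exp(-n^{\Omega(1)})$, which survives the union bounds over steps and over the $\le 2^n$ possibilities for the already-used vertices. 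Having thereby fixed $\cF$ and hence $\cF_2$, the bipartite graph $B$ formed (as in Definition~\ref{def_good}) using only second-round edges is independent of the first round and is a subgraph of $B(H,\cF_2,W)$, so a lower bound on $|N_B(W)|$ suffices; now $|N_B(W)|$ is a sum of $|\cF_2|=\eps n$ independent indicators of probability $1-(1-p_2^{|E_2|})^{|W|}\ge|W|(p_2/2)^\Delta$ (using $|E_2|\le\Delta$ and $|W|\le(p/2)^{-\Delta}/2$), hence of mean at least $\eps n|W|(p_2/2)^\Delta$, and a Chernoff bound together with $p_2\ge(1-o(1))p$ gives $|N_{B(H,\cF_2,W)}(W)|\ge(p/2)^\Delta|W|\eps n/4$ except with probability $\exp(-\Omega(\eps|W|C^\Delta\ln n))$, which beats $\binom{n}{|W|}$ once $C$ is large; summing over the sizes of $W$ and over the finitely many profiles finishes this case.

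Conditions~\eqref{prop_2} and~\eqref{prop_3} carry the main weight, following the strategy of~\cite{DKRR15} adapted from adjacencies to links and $(r-1)$-sets. For~\eqref{prop_2}, with $i$ and $\cL$ fixed, $|N_{B(H,\cL,V_i)}(\cL)|=\sum_{w\in V_i}\mathbf{1}[\exists\,L\in\cL\colon L\subseteq\link_H(w)]$ is a sum of $|V_i|$ independent indicators of probability at least $|\cL|(p/2)^k$, so of mean at least $|\cL|(p/2)^k|V_i|$, and a Chernoff bound gives the required inequality except with probability $\exp(-\Omega(|\cL|(p/2)^k|V_i|))$; since $(p/2)^k|V_i|=\Theta(C^k(\ln n)^{k/\Delta}n^{1-k/\Delta})$, this beats the at most $n^{(r-1)k|\cL|}$ collections $\cL$ of a given size and the $\le t$ choices of $i$ once $C$ is large (the tight case being $k=\Delta$), and summing the resulting geometric-type series over all $|\cL|\le(p/2)^{-k}/2$ and all $k\le\Delta$ keeps the total $o(1)$. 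For~\eqref{prop_3}, by monotonicity it suffices to consider $|\cL|=|W|=\lceil C'(p/2)^{-k}\ln n\rceil$ with $C'=k(r-1)+2$, and then $\PP[B(H,\cL,W)\text{ is edgeless}]=(1-p^k)^{|\cL|\,|W|}\le\exp(-2^{2k}(C')^2p^{-k}(\ln n)^2)$, which outweighs the at most $n^{((r-1)k+1)|\cL|}$ pairs $(\cL,W)$ --- this is exactly what the choice $C'=k(r-1)+2$ ensures. A union bound over the three conditions then proves the lemma.

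The step I expect to be the main obstacle is this last balancing in conditions~\eqref{prop_2} and~\eqref{prop_3}: when $k=\Delta$ the number of collections $\cL$ to be controlled is $\exp(\Theta(n))$, so the per-collection failure probability must be $\exp(-\Theta(n\ln n))$ with a sufficiently favourable constant, and it is exactly this that dictates taking $C$ large and choosing $C'=k(r-1)+2$. Everything else --- the greedy packing in~\eqref{prop_1} and the Chernoff estimates, once the independence of the indicators is observed --- is routine, though translating the graph bookkeeping of~\cite{DKRR15} to links and $(r-1)$-sets, and handling the two-round exposure in~\eqref{prop_1} cleanly, still has to be carried out carefully.
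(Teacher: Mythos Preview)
Your approach is essentially the paper's: fix the partition, expose in two rounds to build $\cF$ first, then verify \eqref{prop_1}--\eqref{prop_3} via Chernoff bounds and a union bound, exactly as in the proof given. Two points are worth correcting, though neither is fatal.

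First, your justification of independence is misplaced. You write that ``no edge can contain two vertices of a single $V_i$ (by $3$-independence)'', but $3$-independence is a property of the sets $X_i\subseteq V(F)$ in Lemma~\ref{lem_partition}, not of the arbitrary partition $V_0,\dots,V_t$ of $V(\Hnp)$; edges of $\Hnp$ certainly may contain several vertices of a single $V_i$. The independence of the indicators $X_L$ (over $L\in\cF_2$) and $X_v$ (over $v\in V_i$) holds for the simpler reason you also state: the $(r-1)$-sets in question are pairwise vertex-disjoint and the completing vertex lies outside them, so distinct indicators depend on disjoint sets of potential hyperedges. This is exactly how the paper argues.

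Second, your first-round construction of $\cF$ is a little shakier than the paper's. A Janson bound at probability ``just above the appearance threshold'' need not give failure probability $\exp(-n^{c})$ with $c>1$, so the stated union bound over ``$\le 2^n$ possibilities for the already-used vertices'' is not obviously valid; and if you instead do an honest greedy step-by-step argument, you have to be careful because the remaining vertex set at each step depends on earlier randomness. The paper sidesteps this by taking $p_1=p_2\ge p/2$ and invoking Theorem~\ref{thm:almost_factor} (an almost-spanning factor result) once per profile to obtain the $\eps n$ vertex-disjoint copies of $(Z,E_1)$ inside $V_0$ in one shot. Your Chernoff/union-bound treatment of \eqref{prop_1}--\eqref{prop_3} in the second round, and of \eqref{prop_2}--\eqref{prop_3} in general, matches the paper's computations.
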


\begin{lemma}
\label{lem_gooduniversal}
 For integers $r\ge 2$, $\Delta \ge 1$  and $\eps \le |\PnD|^{-1} r^{-3}\Delta^{-3}$, there exists a $C>0$ such that  for $p\ge C \left( \ln n / n \right)^{1/\Delta}$, every $\left( n,r,p,r^3\Delta^3,\eps,\Delta \right)$-good hypergraph is $\FnD$-universal.
\end{lemma}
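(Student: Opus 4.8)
The plan is to embed an arbitrary $F\in\FnD$ into $H$ as a bijection of vertex sets, in $t+1$ rounds, following the strategy of~\cite{DKRR15}. Fix a partition $V(H)=V_0\cup\dots\cup V_t$ witnessing that $H$ is $(n,r,p,t,\eps,\Delta)$-good, with $t=r^3\Delta^3$, and apply Lemma~\ref{lem_partition} to $F$ with the same $t$ and $\eps$; this is admissible because the hypothesis $\eps\le|\PnD|^{-1}r^{-3}\Delta^{-3}$ is at least as strong as what Lemma~\ref{lem_partition} requires. We obtain $V(F)=X_0\cup\dots\cup X_t$ with $|X_t|=\eps n$, $X_0=N_F(X_t)$, all $x\in X_t$ carrying one fixed profile $(Z,E_1,E_2)\in\PnD$, and each $X_i$ $3$-independent for $1\le i\le t$. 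I would grow a partial embedding $\phi$ round by round, keeping the invariant that every edge of $F$ inside the already embedded vertex set is mapped to an edge of $H$. By $3$-independence, any edge of $F$ first spanned in round $i$ meets $X_i$ in a single vertex $x$ and has the form $\{x\}\cup g$ with $g$ in earlier rounds, so for each vertex only at most $\Delta$ constraints ever have to be verified.

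Round $0$ embeds $X_0$: by properties~\eqref{prop:X0} and~\eqref{prop:profiles} of Lemma~\ref{lem_partition}, $F[X_0]$ is the vertex-disjoint union of $\eps n$ copies of $(Z,E_1)$, one per $x\in X_t$, so condition~\eqref{prop_1} of goodness yields a family $\cF$ of $\eps n$ vertex-disjoint copies of $(Z,E_1,E_2)$ inside $V_0$ with edges $E_1$ present; define $\phi$ on $X_0$ so that $F[N_F(x)]$ is mapped onto a distinct member of $\cF$ for each $x$. This simultaneously fixes $\phi(\link_F(x))$ as a member of the induced family $\cF_2$, with $\{\phi(\link_F(x)):x\in X_t\}=\cF_2$. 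Rounds $1,\dots,t-1$ embed the sets $X_i$ one by one. Put $L_x:=\{\phi(g):\{x\}\cup g\in E(F),\ g\subseteq X_0\cup\dots\cup X_{i-1}\}$ for $x\in X_i$; each $L_x$ is a collection of at most $\Delta$ many $(r-1)$-sets, and $3$-independence forces $\{L_x:x\in X_i\}$ to be pairwise disjoint, since a shared member would place two vertices of $X_i$ at distance at most $2$. Embedding $X_i$ means finding a matching saturating $X_i$ in the bipartite graph on $X_i$ and the currently free vertices $A_i\subseteq V_0\cup\dots\cup V_i$ in which $x$ is joined to $w$ iff $L_x\subseteq\link_H(w)$; since $\sum_{j\le i}|X_j|\le(1-\eps)n$ while $|V_0\cup\dots\cup V_i|\ge(1-\eps/10)n$, the slack $|A_i|-|X_i|$ is $\Omega(n)$ and the untouched block $V_i$ lies in $A_i$. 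Hall's condition, checked separately for each value $k=|L_x|$, holds because small subsets $S$ expand into $V_i$ by condition~\eqref{prop_2} (and, by monotonicity, so do all $S$ with $|S|\le\eps n/(80t)$), whereas a violation for a larger $S$ would leave a set of $\Omega(n)$ free vertices having no edge to the family $\cL$ of the corresponding $L_x$, contradicting condition~\eqref{prop_3} once $C$ is large enough that its thresholds $C'(p/2)^{-k}\ln n$ drop below these $\Omega(n)$ quantities.

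Round $t$ embeds $X_t$ into $A_t:=V(H)\setminus\phi(X_0\cup\dots\cup X_{t-1})$, which has $|A_t|=\eps n=|X_t|$ and still contains the untouched block $V_t$. Every neighbour of every $x\in X_t$ lies in $X_0$, so the constraint of $x$ is $\phi(\link_F(x))\in\cF_2$, and finishing $\phi$ is precisely a perfect matching in $B(H,\cF_2,A_t)$. I would verify Hall's condition for all subset sizes: the sublinear range through the expansion into $V_t$ from condition~\eqref{prop_2} combined with the $W$-side expansion of condition~\eqref{prop_1}, and the linear range through condition~\eqref{prop_3}; because $|A_t|=|X_t|$, a matching saturating either side is automatically perfect. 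Composing the maps of all rounds produces an embedding of $F$ into $H$, and as $F\in\FnD$ was arbitrary, $H$ is $\FnD$-universal.

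\textbf{The main obstacle} is exactly this last round. In rounds $1,\dots,t-1$ the slack $|A_i|-|X_i|=\Omega(n)$ always produced a large forbidden vertex set on which to apply condition~\eqref{prop_3}; in round $t$ that slack vanishes, and the argument breaks down precisely for the subsets $S\subseteq X_t$ with $|X_t|-|S|$ below the linear (the $k=\Delta$) threshold of condition~\eqref{prop_3} --- that is, at the very top of the Hall inequality, where a \emph{perfect}, not merely near-perfect, matching is needed. Handling this is where the proof follows~\cite{DKRR15} most closely: one passes to a defect form of Hall's theorem for the bulk of $X_t$ and then absorbs the remaining small part using the strong expansion provided by conditions~\eqref{prop_1} and~\eqref{prop_2} and a suitably large choice of $C$. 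A secondary, purely clerical, issue throughout is to track which edges of $F$ first become checkable in which round and to confirm that the $(r-1)$-set families passed to conditions~\eqref{prop_2} and~\eqref{prop_3} are genuinely pairwise disjoint, which is exactly what $3$-independence of the $X_i$ guarantees.
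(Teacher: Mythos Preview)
Your proposal is correct and follows essentially the same route as the paper: partition $F$ via Lemma~\ref{lem_partition}, embed $X_0$ onto the family $\cF$ from property~\eqref{prop_1}, then extend round by round using Hall's theorem together with properties~\eqref{prop_2} and~\eqref{prop_3}, with the final round additionally invoking the $W$-side expansion of property~\eqref{prop_1}. The only cosmetic difference is in the last round, where the paper does not use a defect-Hall-plus-absorption scheme but simply verifies Hall's condition from both sides of $B(H,\cF_2,V_t^*)$: properties~\eqref{prop_2}--\eqref{prop_3} already give $|N(U)|>|V_t^*|-\eps n/10$ once $|U|$ is moderately large, and property~\eqref{prop_1} yields $|N(W)|\ge|W|$ for every $W\subseteq V_t^*$ with $|W|\le\eps n/10$, which by the standard complement argument closes the remaining range of $U$.
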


The proof of Theorem~\ref{theom_universality} follows immediately from Lemmas~\ref{lem_Hnpgood} and~\ref{lem_gooduniversal}. Thus, it remains to prove both lemmas.

We will make use of the following version of Chernoff's inequality, see e.g.~\cite[Theorem~2.8]{JansonLuczakRucinski_RandomGraphs}.
\begin{theorem}[Chernoff's inequality]
 Let $X$ be the sum of independent binomial random variables, then for any $\delta \ge 0$
\begin{align*}
 \PP [X \le (1 - \delta) \EE(X)] \le \exp (-\delta^2 \EE (X)/2).
\end{align*}
\end{theorem}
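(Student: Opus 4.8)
The plan is to prove this lower-tail estimate by the classical exponential moment (Chernoff) method. Write $\mu=\EE X$ and, using independence, decompose $X=\sum_j X_j$ where each $X_j\sim\mathrm{Bin}(m_j,q_j)$, so that $\mu=\sum_j m_jq_j$. The bound is trivial when $\delta=0$ (the right-hand side equals $1$) and when $\delta>1$ (the event $\{X\le(1-\delta)\mu\}$ is empty, as $X\ge0$); when $\delta=1$ it reduces to $\PP[X=0]=\prod_j(1-q_j)^{m_j}\le e^{-\mu}\le e^{-\mu/2}$ via $1-x\le e^{-x}$. So I would focus on $0<\delta<1$.

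First I exponentiate with a free parameter $s>0$ and apply Markov's inequality; since $t\mapsto e^{-st}$ is decreasing,
\[
\PP[X\le(1-\delta)\mu]=\PP\!\left[e^{-sX}\ge e^{-s(1-\delta)\mu}\right]\le e^{s(1-\delta)\mu}\,\EE\!\left[e^{-sX}\right].
\]
Next I bound the moment generating function using independence and the elementary inequality $1+x\le e^{x}$: for each summand $\EE[e^{-sX_j}]=\big(1+q_j(e^{-s}-1)\big)^{m_j}\le\exp\!\big(m_jq_j(e^{-s}-1)\big)$, whence $\EE[e^{-sX}]\le\exp\!\big(\mu(e^{-s}-1)\big)$. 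Combining the two displays gives $\PP[X\le(1-\delta)\mu]\le\exp\!\big(\mu\,[\,s(1-\delta)+e^{-s}-1\,]\big)$ for every $s>0$.

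The third step is to optimize the exponent $g(s)=s(1-\delta)+e^{-s}-1$ over $s>0$. Since $g'(s)=(1-\delta)-e^{-s}$ vanishes exactly at $s^\ast=\ln\frac{1}{1-\delta}>0$ and $g''>0$, this is the minimizer, and substituting yields
\[
\PP[X\le(1-\delta)\mu]\le\exp\!\Big(\mu\big[-(1-\delta)\ln(1-\delta)-\delta\big]\Big)=\left(\frac{e^{-\delta}}{(1-\delta)^{1-\delta}}\right)^{\!\mu}.
\]

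The final step, and the only genuinely analytic one, is to replace this sharp exponent by the cleaner $-\delta^2/2$, i.e.\ to show $-(1-\delta)\ln(1-\delta)-\delta\le-\delta^2/2$ on $[0,1)$. I would prove this by setting $h(\delta)=(1-\delta)\ln(1-\delta)+\delta-\tfrac{\delta^2}{2}$ and checking $h\ge0$: one has $h(0)=0$ and $h'(\delta)=-\ln(1-\delta)-\delta\ge0$, the last inequality coming from the series $-\ln(1-\delta)=\sum_{k\ge1}\delta^k/k\ge\delta$, so $h$ is nondecreasing and hence nonnegative. This yields the stated bound $\PP[X\le(1-\delta)\mu]\le e^{-\delta^2\mu/2}$. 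I expect this last inequality to be the main (if routine) obstacle: every earlier step is a direct application of Markov's inequality, independence, and $1+x\le e^{x}$, and the only subtlety is keeping track of the admissible range of $\delta$ and justifying $-\ln(1-\delta)\ge\delta$.
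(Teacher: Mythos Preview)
Your argument is correct and is the standard exponential--moment derivation of the lower--tail Chernoff bound; the edge cases, the MGF estimate via $1+x\le e^x$, the optimization in $s$, and the final analytic inequality $(1-\delta)\ln(1-\delta)+\delta\ge\delta^2/2$ are all handled cleanly.

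Note, however, that the paper does not give its own proof of this statement: it merely quotes it as a known tool with a reference to \cite[Theorem~2.8]{JansonLuczakRucinski_RandomGraphs}. So there is nothing to compare against beyond observing that your proof is precisely the one underlying that citation.
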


For an $r$-uniform hypergraph $F$ with $v(F)\ge r$, we define $d^{(1)}(F):=\frac{e(F)}{v(F)-1}$ and we also set $d^{(1)}(F)=0$ if $v(F)\le r-1$. Now we set $m^{(1)}(F):=\max\left\{d^{(1)}(F')\colon F'\subseteq F\right\}$.
We will use the following theorem that deals with almost spanning factors in random (hyper-)graphs. 
\begin{theorem}\label{thm:almost_factor}[Theorem~4.9 in~\cite{JansonLuczakRucinski_RandomGraphs}]
For every $r$-uniform hypergraph $F$ and every $\eps< 1/v(F)$ there is a $C>0$ such that for $p\ge C n^{-1/m^{(1)}(F)}$, the random hypergraph $\Hnp$ contains $\eps n$ vertex-disjoint copies of $F$ a.a.s.
\end{theorem}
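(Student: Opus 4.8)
The plan is to reduce the statement to a local property --- that a.a.s.\ \emph{every} linear-sized set of vertices spans a copy of $F$ --- and then to extract the $\eps n$ disjoint copies greedily. Write $\delta:=1-\eps\,v(F)$, which is positive since $\eps<1/v(F)$. The first step is to prove: there is $C=C(F,\eps)$ such that if $p\ge C n^{-1/m^{(1)}(F)}$ then a.a.s.\ every $W\subseteq[n]$ with $|W|\ge\delta n$ has vertex set containing a copy of $F$ in $\Hnp$. Granting this, one removes copies of $F$ one at a time: as long as fewer than $\lfloor\eps n\rfloor$ have been removed, the removed vertices number fewer than $\eps\,v(F)n=(1-\delta)n$, so more than $\delta n$ vertices remain and the local property yields another copy on them; the copies produced are pairwise vertex-disjoint by construction. (By monotonicity of the event in $p$ one may as well take $p=Cn^{-1/m^{(1)}(F)}$; nothing in the hypergraph case differs from $r=2$.)

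For the local property, fix $W$ with $|W|=\lceil\delta n\rceil$ and let $X_W$ count the copies of $F$ whose vertex set is contained in $W$. Then $\mu:=\EE(X_W)=\Theta\bigl(|W|^{v(F)}p^{e(F)}\bigr)$ (with constants depending only on $F$), and since $m^{(1)}(F)\ge e(F)/(v(F)-1)$ we have $e(F)/m^{(1)}(F)\le v(F)-1$, hence
\[
\mu\ \ge\ c_F\,\delta^{v(F)}C^{e(F)}\,n^{\,v(F)-e(F)/m^{(1)}(F)}\ \ge\ c_F\,\delta^{v(F)}C^{e(F)}\,n .
\]
Thus a linear-sized $W$ already carries $\Omega(n)$ copies in expectation --- this is precisely what the normalisation by $v(F')-1$ (rather than $v(F')$) in $m^{(1)}$ is designed to give. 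I would then apply Janson's inequality in the schematic form $\PP[X_W=0]\le\exp\bigl(-\Omega(\min\{\mu,\ \mu^2/\bar\Delta\})\bigr)$, where $\bar\Delta$ is the usual sum of $\PP[A\wedge B]$ over ordered pairs of distinct copies $A,B$ of $F$ inside $W$ that share at least one edge. Grouping these pairs by the isomorphism type of the overlap $F'=A\cap B$ (finitely many types, each with $v(F')\ge2$ and $e(F')\ge1$) gives $\bar\Delta=\sum_{F'}\Theta\bigl(|W|^{2v(F)-v(F')}p^{2e(F)-e(F')}\bigr)$; dividing by $\mu^2$ and using $p\ge Cn^{-1/m^{(1)}(F)}$ together with $m^{(1)}(F)\ge e(F')/(v(F')-1)$ yields
\[
|W|^{-v(F')}p^{-e(F')}\ \le\ \delta^{-v(F')}C^{-e(F')}\,n^{\,e(F')/m^{(1)}(F)-v(F')}\ \le\ \delta^{-v(F')}C^{-e(F')}\,n^{-1},
\]
so that $\bar\Delta\le O_{F,\delta}\bigl(\mu^2/(Cn)\bigr)$; the crucial point is the surviving factor $C^{-e(F')}$ with $e(F')\ge1$.

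Putting the two estimates together, $\PP[X_W=0]\le\exp(-\Omega_{F,\delta}(Cn))$ in both regimes ($\bar\Delta\le\mu$, where the bound is $e^{-\Omega(\mu)}$; and $\bar\Delta>\mu$, where it is $e^{-\Omega(\mu^2/\bar\Delta)}=e^{-\Omega(Cn)}$). A union bound over the at most $\binom{n}{\lceil\delta n\rceil}\le 2^n$ choices of $W$ then proves the local property, provided $C=C(F,\eps)$ is taken large enough that the constant implicit in $\Omega_{F,\delta}(Cn)$ exceeds $\ln 2$ --- which is possible because $\delta$, and hence all dependence on $\delta$, is determined once $\eps$ is fixed. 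I expect the only delicate step to be the bound on $\bar\Delta$: one must keep the factor $C^{-e(F')}$ (equivalently, exploit that two edge-sharing copies of $F$ together span strictly fewer than $2e(F)$ edges), since it is exactly this gain that lets the exponent in $\PP[X_W=0]$ grow with $C$ and thereby beat the $2^n$ union bound even when $\eps$ is close to the extremal value $1/v(F)$.
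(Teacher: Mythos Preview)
The paper does not give its own proof of this statement: it is quoted as Theorem~4.9 from Janson, {\L}uczak and Ruci{\'n}ski and used as a black box in the proof of Lemma~\ref{lem_Hnpgood}. Your argument is correct and is in fact the standard textbook proof (Janson's inequality on each linear-sized $W$, with $\bar\Delta/\mu^2$ controlled termwise via $e(F')/m^{(1)}(F)\le v(F')-1$, a union bound over the $\le 2^n$ choices of $W$, and greedy extraction of disjoint copies). One cosmetic remark: for $r$-uniform hypergraphs two copies sharing an edge overlap in at least $r$ vertices, so in your $\bar\Delta$ estimate one actually has $v(F')\ge r$ rather than merely $v(F')\ge 2$; this only strengthens the bound.
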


\begin{proof}[Proof of Lemma~\ref{lem_Hnpgood}]
 Let $r$, $\Delta$, $t$ and $\eps\le 1/(r \Delta)$ be given, furthermore we assume that $p \ge C \left( \ln n / n \right)^{1/\Delta}$, where $C$ is a sufficiently large constant that depends only on $\eps$, $r$, $\Delta$ and $t$. We will not specify $C$ explicitly but it will be clear from the context how it should be chosen. 
 
We expose $\Hnp$ in two rounds and  write $\Hnp = \cH^{(r)}(n,p_1) \cup \cH^{(r)}(n,p_2)$, where $p_1=p_2 \ge p/2$ sucht that $(1-p)=(1-p_1)(1-p_2)$. 
In the first round we will find the familes $\mathcal{F}$ and in the second round we show that properties~\eqref{prop_1}-\eqref{prop_3} of  Definition~\ref{def_good} all hold with probability $1-o(1)$. In the beginning we  arbitrarily partition $V$ into $V_0 \cup V_1 \cup \dots \cup V_t$ such that  $|V_0|=n-\eps n/10$ and $V_i=\eps n/(10t)$ for $i=1,\dots,t$. 

\textbf{1\textsuperscript{st} round.} For a given profile  $(Z,E_1,E_2) \in \PnD$ we have that the maximum degree of  $G=(Z,E_1)$ is at most $\Delta-1$. We estimate $m^{(1)}(G)\le \max_{s\ge r}\frac{(\Delta-1) s}{r (s-1)}\le \Delta-1$. 
Theorem~\ref{thm:almost_factor} implies that there exist $\eps n$ vertex-disjoint copies of $G$ in $\cH^{({r})}(n,p_1)$ all of whose vertices are contained inside $V_0$ a.a.s. Indeed, we apply Theorem~\ref{thm:almost_factor} to $\cH^{({r})}(|V_0|,p)$ where $|V_0|\ge (1-\eps/10)n > (r-1)\Delta \eps n +\eps n/10$. We denote this family by $\cF_{(Z,E_1)}$. 

Since there are constantly many (at most $|\PnD|$) $r$-uniform hypergraphs $G$ on at most $(r-1)\Delta$ vertices with maximum degree $\Delta-1$, we will find simultaneously $\eps n$ vertex-disjoint copies of any such $G$ a.a.s.\ within $V_0$.  
Therefore, with a given profile $(Z,E_1,E_2) \in \PnD$, we associate a family $\cF$  of $\eps n$ 
vertex-disjoint copies $(Y,E',E'')$ with $(Y,E')\in \cF_{(Z,E_1)}$ and such that $(Y,E',E'')\cong (Z,E_1,E_2)$. This gives us a family $\cF_2$ of the $E''$s for such a profile, thus showing the first part of the property~\eqref{prop_1} of Definition~\ref{def_good}.

\textbf{2\textsuperscript{nd} round.}
From now on we work in $\cH=\cH^{({r})}(n,p_2)$. 

Fix some profile $(Z,E_1,E_2) \in \PnD$ and the corresponding family $\cF$ found in the first round. 
The family $\F$ induces a family $\mathcal{F}_2$ of disjoint copies of $E_2$ in $V_0 \choose r-1$.
Let $W$ be a subset of $V(\cH) \setminus V(\mathcal{F}_2)$ with $|W| \le (p/2)^{- \Delta}/2$.
For every $L \in \F_2$ let $X_L$ be the random variable with $X_L=1$ if and only if $L \subseteq \link_\cH(w)$ for some $w \in W$.
This gives us $|N_{B(\cH,\mathcal{F}_2,W)}(W)| = \sum_{L \in \mathcal{F}_2} X_L$.
The $X_L$ are independent and since $\PP[ xL\in E(B(\cH,\mathcal{F}_2,W))]\ge (p/2)^\Delta$, we compute
\begin{align*}
 \PP [X_L=0] \le (1 - (p/2)^\Delta)^{|W|} \le 1 - |W| (p/2)^\Delta + |W|^2 (p/2)^{2\Delta} \le 1 - |W| (p/2)^\Delta /2.
\end{align*}
From this we obtain
\begin{align*}
 \EE \left[ \sum_{L \in \mathcal{F}_2} X_L \right] \ge (p/2)^\Delta |W| |\mathcal{F}_2|/2\overset{|\cF_2|=\eps n}{\ge} \eps(C/2)^\Delta  |W|(\ln n) /2 
\end{align*}
and using Chernoff's inequality with $\delta=1/2$ we get
\begin{align}\label{eq:prop_1_W}
 \PP \left[ \sum_{L \in \mathcal{F}_2} X_L < (p/2)^\Delta  |W| \, |\mathcal{F}_2|/4 \right] \le \exp(-\eps(C/2)^\Delta  |W|(\ln n) /16) = n^{-\eps (C/2)^\Delta  |W| /16}.
\end{align}
Since there are at most $n^s$ choices for a set $W$ of size $s$ we can bound, for $C$ large enough, the probability that there is a set $W$ violating property~\eqref{prop_1} for $\mathcal{F}_2$ by $o(1)$.

The number of different profiles in  $\PnD$ depends only on $\Delta$ and thus also the number of $\cF_2$s. 
Thus taking the union bound over the probability that there is a set $W$ violating our condition for some family $\cF_2$ is still $o(1)$. This verifies property~\eqref{prop_1} of Definition~\ref{def_good}.

To verify properties~\eqref{prop_2} and~\eqref{prop_3} of Definition~\ref{def_good}, we use the edges of $\cH^{({r})}(n,p_2)$. Let $k \in [\Delta]$, $\mathcal{L}$ be a collection of disjoint $k$-subsets of $V \choose r-1$ with $|\mathcal{L}| \le (p/2)^{-k}/2$ and $i \in [t]$ such that $V(\mathcal{L}) \cap V_i = \emptyset$.
For $v \in V_i$, let $X_v$ be the random variable with $X_v=1$ if and only if $L \subseteq \link_\cH(v)$ for some $L \in \mathcal{L}$.
Thus $|N_{B(\cH,\cL,V_i))}(\cL)| = \sum_{v \in V_i} X_v$.
As above we obtain
\begin{align*}
\PP[X_v=0] \le \left(1-(p/2)^k\right)^{|\mathcal{L}|} \le 1 - |\mathcal{L}| (p/2)^k + |\mathcal{L}|^2 (p/2)^{2k} \le 1-|\mathcal{L}|(p/2)^k /2.
\end{align*}
We have
\begin{align*}
\EE \left[ \sum_{v \in V_i} X_v \right] \ge (p/2)^k |\mathcal{L}| |V_i| /2 \overset{|V_i|=\frac{\eps n}{10 t}}{\ge} \eps (C/2)^k  |\cL| (\ln n) /(20t)
\end{align*}
and from Chernoff's inequality with $\delta=1/2$ we get
\begin{align*}
\PP\left[ \sum_{v \in V_i} X_v \le (p/2)^k |\mathcal{L}| |V_i| /2 \right] \le 
\exp(- (p/2)^k |\mathcal{L}| |V_i| /16)\le 
n^{-\eps(C/2)^k |\cL|/(320t) }.
\end{align*}
There are less than $n^{(r-1)k|\cL|}$ possibilities to choose $\cL$. Therefore, for $C$ large enough, the probability that there exists $k \in [\Delta]$ and sets $\mathcal{L}$ and $V_i$ that violate property~\eqref{prop_2} of Definition~\ref{def_good}  is $o(1)$.

Finally, we verify that property~\eqref{prop_3} holds a.a.s.\ in $\cH$. For this we set $\ell =C' (p/2)^{-k}  \ln n$ and let $k \in [\Delta]$. 
It suffices to consider only  sets $\cL$  and $W \subseteq V \setminus V(\cL)$ each of size $\ell$.
For two such sets $\mathcal{L}$ and $W$ the probability that an edge in $B(\cH,\mathcal{L},W)$ is present equals $(p/2)^k$ and therefore the probability that there are no edges is $(1-(p/2)^k)^{\ell^2} \le \exp(-\ell^2 (p/2)^k)$.

There are less than $n^{(r-1)k\ell}$ choices for $\cL$ and less than $n^\ell$ choices for $W$.
Thus we can bound the probability that there are sets $\mathcal{L}$ and $W$ of size $\ell$ violating property~\eqref{prop_3} by
\begin{align*}
\exp [ ((r-1)k\ell+\ell)\ln n - \ell^2 (p/2)^k] = \exp[  ((r-1)k + 1 - C^\prime)\ell\ln n  ]  =o(1).
\end{align*}

\end{proof}

\begin{proof}[Proof of Lemma~\ref{lem_gooduniversal}]
Let $r$, $\Delta$,  $\eps\le |\PnD|^{-1}r^{-3} \Delta^{-3}$ be given and let $C_{\ref{lem_Hnpgood}}$ be a constant as asserted by Lemma~\ref{lem_Hnpgood} on input $r$, $\Delta$, $t:=r^3\Delta^3$ and $\eps$.  Furthermore we assume that $p \ge C \left( \ln n / n \right)^{1/\Delta}$, where $C$ is a sufficiently large constant that depends only on $\eps$, $r$, $\Delta$ and $C_{\ref{lem_Hnpgood}}$. We will not specify $C$ explicitly but it will be clear from the context how it should be chosen. 

Let $H$ be an $\left( n,r,p,t,\eps,\Delta \right)$-good hypergraph and fix the partition $V_0\cup V_1 \cup \dots \cup V_t$ of $V(G)$ as specified by Definition~\ref{def_good}.
Fix any $F \in \FnD$ and apply Lemma~\ref{lem_partition} with $r$, $\Delta$, $t=r^3\Delta^3$ and $\eps$ to obtain a partition of $V(F)$ in $X_0\cup \dots \cup X_t$ with the properties~\eqref{prop:X0}--\eqref{prop:indep} from Lemma~\ref{lem_partition}.

An embedding of $F$ into $G$ is an injective map $\phi \colon V(F) \to V(H)$, where edges are mapped onto edges. 
We start with constructing an embedding $\phi_0$ that $X_0$ maps into $V_0\subset V(H)$. From Lemma~\ref{lem_partition}, property~\eqref{prop:profiles}, we know that every $x\in X_t$ has the same profile in $F$. Therefore, let $(Z,E_1,E_2)$ be the profile of any $x \in X_t$. 
By Definition~\ref{def_good}, property~\eqref{prop_1},  there is a family $\cF$ of copies of $(Z,E_1,E_2)$ with vertices in $V_0$. Since $F[X_0]$ is the disjoint union of $\eps n$ copies of $(Z,E_1)$ we can construct $\phi_0$ by mapping bijectively every copy $(N_F(x),F[N_F(x)],\link_F(x))$ to one member $(Y,E',E'')$ of $\cF$. 
This is for sure a valid embedding of $F[X_0]$ into $H$.

Now we construct $\phi_{i}$ from $\phi_{i-1}$ for $i=1,\dots,t$ by embedding $X_i$ such that $\phi_i\left(F[\cup_{j=0}^i X_j]\right)\subseteq H$. 
The available vertices for this step are $V_i^*=(V_0\cup \dots \cup V_i) \setminus \Img (\phi_{i-1})$.
For $x \in X_i$ we collect the images of the already fully embedded $(r-1)$-sets from the  $\link_F(x)$
in 
\[
 L(x) := \left\{ \phi_{i-1}(e) : e \in  \link_F(x) \cap {\bigcup_{j=0}^{i-1}X_j \choose r-1}  \right\}.
\]
Since $X_i$ is $3$-independent we have $L(x_1)\cap L(x_2) = \emptyset$ for $x_1,x_2 \in X_i$ and we set $\cL_i = \{ L(x) : x \in X_i \}$ is a collection of vertex-disjoint sets in $\binom{V(H)}{r-1}$.
A possible image for $x \in X_i$ is any $v \in V_i^*$, for which $L(x) \subseteq \link_H(v)$. 
 It remains to find an $\cL_i$-matching in $B_i=B(H,\mathcal{L}_i,V_i^*)$ since then we set $\phi_i(x):=v$ for every  edge $vL(x)$ in this matching and, since any edge $e\in E(F)$ intersects $X_i$ in at most one vertex, we obtain 
 $\phi_i\left(F[\cup_{j=0}^i X_j]\right)\subseteq H$. 
 
 To guarantee an $\cL_i$-matching in $B_i$ we  will verify Hall's condition.  Let $U \subseteq \mathcal{L}_i$ and one needs to show that  $|N_{B_i}(U)| \ge |U|$ holds.
We assume $\emptyset \not\in U$, because otherwise $N_{B_i}(U)=V_i^*$ and $|V_i^*|\ge |\cL_i|$.

 First we verify Hall's condition for all sets $U$ with $|U| \le |V_i^*| - \eps n/10$. 
Notice that there exists  a $k\in[\Delta]$ and a subset $U^\prime$ of size at least $|U| / \Delta$ and $|L|=k$ for every $L \in U^\prime$.  If $|U^\prime| \le (p/2)^{-k}/2$, then by property~\eqref{prop_2} of Definition~\ref{def_good}  we have for $C$ large enough
\begin{align*}
|N_{B_i}(U)| \ge |N_{B_i}(U^\prime)| \ge (p/2)^k |U^\prime| |V_i|/4 \ge \eps (C/2)^k   |U| (\ln n)/(40t\Delta) \ge |U|.
\end{align*}

If $(p/2)^{-k}/2 < |U^\prime| < C' (p/2)^{-k} \ln n$, then we take any subset $U^{\prime\prime}$ of size $(p/2)^{-k}/2$ and use again property~\eqref{prop_2} of Definition~\ref{def_good}  to obtain for $C$ large enough
\begin{align*}
|N_{B_i}(U)| \ge |N_{B_i}(U^{\prime\prime})| \ge (p/2)^k |U^{\prime\prime}| |V_i|/4 \ge \eps (C/2)^k |U|/(20C^\prime t\Delta) \ge |U|.
\end{align*}

If $|U^\prime| > C^\prime (p/2)^{-k} \ln n$, then $|U|> C^\prime (p/2)^{-k} \ln n$ and there are no edges between $U$ 
and $V^*_i\setminus N_{B_i}(U)$ in $B_i$.  Therefore, property~\eqref{prop_3} of 
Definition~\ref{def_good}  yields for $C$ large enough
\[
|V_i^* \setminus N_{B_i}(U)| <  C'(p/2)^{-k} \ln n  \le C'(C/2)^{-k} (n/\ln n)^{k/\Delta} \ln n \le \eps n/10,
\]
and thus $|N_{B_i}(U^\prime)| > |V_i^*| - \eps n/10$ which verifies Hall's condition in $B_i$ for $|U|\le |V^*_i|-\eps n$.

For $i=1,\dots,t-1$ it follows from $|\bigcup_{i=1}^t V_i|=\eps n/10$ and $|X_t|=\eps n$, that 
\[
|V_i^*| - |X_i|\ge (n-|\Img\phi_{i-1}|-\eps n/10)- (n-|\Img\phi_{i-1}|-\eps n) \ge 9/10 \eps n
\] and therefore  $|\cL_i| = |X_i| \le |V_i^*| - \eps n/10$. 

Therefore we find $\cL_i$-matchings in $B_i$ for $i\in[t-1]$ one after each other extending at each step our embedding. 

In the last step, we have $|V_t^*|=|X_t|=\eps n$ and, by the partitioning of $V(F)$ with $X_0=N_F(X_t)$ we have $\cL_t=\cF_2$, where $\cF_2$ is the family guaranteed by property~\eqref{prop_1} of Definition~\ref{def_good}. Since we already saw that $|N_{B_t}(U)|\ge |U|$ for all $U\subseteq \cL_t$ with $|U|\le |\cL_t|-\eps n/10$ in $B_t=B(H,\cL_t,V^*_t)$, it suffices to verify 
$|N_{B_t}(W)|\ge |W|$ for all $W\subseteq V_i^*$ with $|W|\le \eps n/10$. If $|W|> (p/2)^{-\Delta}/2$ then we take an  arbitrary subset $W'\subseteq W$  of size exactly $(p/2)^{-\Delta}/2$ and otherwise we set $W':=W$. By property~\eqref{prop_1} of Definition~\ref{def_good}, we have 
\[
|N_{B_t}(W')| \ge (p/2)^\Delta |W'| \eps n/4,
\]
which is at least $\eps n/8>\eps n/10\ge  |W|$ if $W'\subsetneq W$ and is at least $\eps (C/2)^\Delta (\ln n) |W'|/4>|W|$ if $W=W'$. Therefore, $N_{B_t}(U)\ge |U|$ for all $|U|\ge |\cL_t|-\eps n/10$ as well and there exists a (perfect) $\cL_t$-matching in $B_t$ that allows us to finish embedding $F$ into $H$.
\end{proof}

In the proof of Theorem~\ref{theom_universality} we only considered the case of constant $\Delta$. Similarly to the arguments in~\cite{ferber2013universality} this can be extended to the range when $\Delta$ is some function of $n$ but then this would affect the lower bound on the  probability $p$. Furthermore, the proof yields a randomized polynomial time algorithm that on input $\Hnp$ embeds a.a.s.\ any given $F\in\FnD$ into $\Hnp$. All steps of the proof can be performed in polynomial time and the only place where we need to use additional random bits is to split $\Hnp$ into $\cH^{({r})}(n,p_1)\cup \cH^{({r})}(n,p_2)$ and this can be done similarly as was done in~\cite{ABHKPonline}.

\section{Sparse universal hypergraphs}
\label{sec:construction}

First we observe that \emph{any} $\FnD$-universal $r$-uniform hypergraph must possess $\Omega (n^{r-r/\Delta})$ edges. Indeed, it follows e.g.\ from a result of Dudek, Frieze, Ruci\'{n}ski and \v{S}ileikis~\cite{dudek2013approximate} that for any $\Delta\ge 1$ and $r\ge 3$ the number of labelled $r$-uniform $\Delta$-regular hypergraphs on $n$ vertices (whenever $r| n\Delta$) is $\Theta\left(\frac{(\Delta n )!}{(\Delta n/r)! (r!)^{\Delta n/r}(\Delta!)^n}\right)$. Thus, the number of non-isomorphic $r$-uniform $\Delta$-regular hypergraphs on $n$ vertices is $\Omega\left(\frac{(\Delta n )!}{(\Delta n/r)! (r!)^{\Delta n/r}(\Delta!)^n n!}\right)$ and a similar bound holds for the cardinality of $\FnD$. On the other hand an $r$-uniform hypergraph with $m$ edges contains exactly $\binom{m}{n\Delta/r}$ hypergraphs with $n\Delta/r$ edges. Thus, it holds $\binom{m}{n\Delta/r}=\Omega\left(\frac{(\Delta n )!}{(\Delta n/r)! (r!)^{\Delta n/r}(\Delta!)^n n!}\right)$ and solving for $m$ yields $m=\Omega\left(n^{r-r/\Delta}\right)$.

 The random hypergraph $\Hnp$ with $p=C(\ln n/n)^{1/\Delta}$ is $\FnD$-universal (by Theorem~\ref{theom_universality}) and has $\Theta(n^{r-1/\Delta} (\ln n)^{1/\Delta})$ edges and thus the exponent in the density of $\Hnp$ is off by roughly a factor of $r$ from the lower bound  $\Omega\left(n^{-r/\Delta}\right)$ on the  density for any $\FnD$-universal hypergraph.  
 In the following we show how one can construct sparser $\FnD$-universal $r$-uniform hypergraphs out of the universal graphs from~\cite{alon2007sparse,alon2008optimal}.
 
 For a given graph $G$ we define the $r$-uniform hypergraph $\cH_r(G)$ on the vertex set $V(G)$ and $E(\cH_r(G)) = \{ f \in {V(G) \choose r} : \exists \, e \in E(G) \text{ with } e \subseteq f \}$. Given a hypergraph $H$ and a graph $G$, we say that $G$ hits $H$ if for all $f \in E(H)$ there is an $e \in E(G)$ with $e \subseteq f$. We define $\sigma(H)$ to be the smallest maximum degree $\Delta(G)$ over all graphs $G$ that hit $H$. 
 
 \begin{lemma}\label{lem:sigma}
 For $F \in \FnD$ we have $\sigma(F) \le \Delta$.
 \end{lemma}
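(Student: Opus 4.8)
The plan is to exhibit, for any given $F \in \FnD$, a single graph $G$ on $V(F)$ that hits $F$ and has $\Delta(G) \le \Delta$; since $\sigma(F)$ is by definition the minimum of $\Delta(G)$ over all graphs $G$ that hit $F$, this immediately gives $\sigma(F) \le \Delta$.

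First I would construct $G$ by an arbitrary (indeed greedy, in any order) choice: for each hyperedge $f \in E(F)$ fix one pair $e_f \in \binom{f}{2}$, which is possible since every $r$-edge with $r \ge 2$ contains at least one pair of vertices. Let $G$ be the graph on $V(F)$ whose edge set is $\{ e_f : f \in E(F) \}$, where multiplicities are ignored. By construction every $f \in E(F)$ contains the graph edge $e_f \in E(G)$, so $G$ hits $F$. The only remaining point is the degree bound. Fix a vertex $v$ and, for each neighbour $w \in N_G(v)$, pick a hyperedge $f(w) \in E(F)$ with $e_{f(w)} = \{v,w\}$ (one exists by definition of $E(G)$). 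Then $v \in e_{f(w)} \subseteq f(w)$, so $f(w)$ lies in the set $\{ f \in E(F) : v \in f \}$, which has size at most $\Delta$. Moreover, distinct neighbours $w \ne w'$ of $v$ satisfy $e_{f(w)} = \{v,w\} \ne \{v,w'\} = e_{f(w')}$; since each hyperedge has exactly one chosen pair, this forces $f(w) \ne f(w')$. Hence $w \mapsto f(w)$ is an injection from $N_G(v)$ into $\{ f \in E(F) : v \in f \}$, giving $\deg_G(v) \le \deg_F(v) \le \Delta$. As $v$ was arbitrary, $\Delta(G) \le \Delta$ and therefore $\sigma(F) \le \Delta(G) \le \Delta$.

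I do not expect a genuine obstacle here: the substance of the lemma is exactly that the naive ``pick any pair inside each hyperedge'' choice cannot overload a vertex, because a vertex $v$ can only acquire graph edges from hyperedges already containing $v$, of which there are at most $\Delta$. The injectivity argument above is the one line that needs a little care, and it is really the whole proof.
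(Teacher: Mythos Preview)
Your proof is correct and the core idea coincides with the paper's: both establish an injection from the $G$-edges incident to a vertex $v$ into the $F$-hyperedges containing $v$, which immediately bounds $\deg_G(v)$ by $\deg_F(v)\le\Delta$. The only difference is cosmetic: you build $G$ constructively by choosing one pair $e_f$ per hyperedge and read off the injection directly, whereas the paper argues by contradiction, taking an edge-minimal hitting graph and using minimality to obtain the same injection.
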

 \begin{proof}
 Suppose there is an $F \in \FnD$ with $\sigma(F) > \Delta$ and  let $G$ be an edge-minimal graph that hits $H$ and with $\Delta(G)=\sigma(H)>\Delta$.  Take a vertex $v$ of degree at least $\Delta+1$ in $G$. If we delete any  edge $e\in E(G)$ that is incident to $v$ there will be a hyperedge $f\in E(F)$ with $e\subseteq f$ so that $f$ does not contain any edge from $E(G)\setminus\{e\}$, by the edge-minimality of $G$. 
 Thus, every  edge $e \in E(G)$ incident to $v$ corresponds to a different hyperedge $f$ in $E(F)$ with $e\subseteq f$.  
Since $\deg(v)\ge \Delta+1$ there must be at least $\Delta+1$ hyperedges in $F$ incident to $v$, a contradiction.
 \end{proof}
 
 \begin{lemma}
 \label{lem:Hruni}
 Let $r \ge 3$ and $\Delta \ge 1$ be integers and $G$ be a $\cF^{(2)}(n,\Delta)$-universal graph. 
 Then $\cH_r(G)$ is universal for the family of $r$-uniform hypergraphs $F$ with $\sigma(F) \le \Delta$.
 \end{lemma}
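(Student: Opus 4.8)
The plan is to lift an embedding of a bounded-degree \emph{hitting graph} to an embedding of $F$ itself. First I would use the hypothesis $\sigma(F)\le\Delta$: by the definition of $\sigma$ there is a graph $G_F$ on the vertex set $V(F)$ with $\Delta(G_F)\le\Delta$ that hits $F$, i.e.\ for every $f\in E(F)$ there is an $e\in E(G_F)$ with $e\subseteq f$ (if $v(F)<n$ we pad $G_F$ with isolated vertices, which does not affect its maximum degree). Since $G_F$ then has $n$ vertices and maximum degree at most $\Delta$, it lies in $\cF^{(2)}(n,\Delta)$, so the $\cF^{(2)}(n,\Delta)$-universality of $G$ supplies an embedding $\varphi\colon V(G_F)\to V(G)$, that is, an injection with $\varphi(e)\in E(G)$ for every $e\in E(G_F)$.

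The second step is to check that the \emph{same} map $\varphi$, now regarded as a map $V(F)\to V(\cH_r(G))$, embeds $F$ into $\cH_r(G)$. It is injective, so it remains only to verify that it sends each edge of $F$ to an edge of $\cH_r(G)$. Fix $f\in E(F)$. Since $G_F$ hits $F$, choose $e\in E(G_F)$ with $e\subseteq f$; then $\varphi(e)\in E(G)$ and $\varphi(e)\subseteq\varphi(f)$, so the $r$-set $\varphi(f)\in\binom{V(G)}{r}$ contains an edge of $G$ and hence belongs to $E(\cH_r(G))$ by the definition of $\cH_r$. Thus $F\subseteq\cH_r(G)$, and as $F$ was an arbitrary $r$-uniform hypergraph on $n$ vertices with $\sigma(F)\le\Delta$, the lemma follows.

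There is essentially no obstacle here beyond this bookkeeping: the content is exactly the observation built into the definition of $\sigma$, namely that a bounded-degree graph hitting $F$ embeds into $G$, together with the trivial fact that any superset of a graph edge is, by construction, a hyperedge of $\cH_r(G)$, so hyperedges are automatically preserved by any graph embedding. The only point worth keeping in mind is that the injectivity of $\varphi$ is needed to guarantee that images of distinct hyperedges are themselves distinct $r$-sets, which is immediate. Combined with Lemma~\ref{lem:sigma}, this in particular yields that $\cH_r(G)$ is $\FnD$-universal whenever $G$ is $\cF^{(2)}(n,\Delta)$-universal.
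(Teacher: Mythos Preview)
Your proof is correct and follows essentially the same approach as the paper: take a hitting graph $G_F\in\cF^{(2)}(n,\Delta)$, embed it into $G$ by universality, and observe that this very embedding also embeds $F$ into $\cH_r(G)$. Your version is in fact more explicit than the paper's, which simply asserts that ``any embedding of $G'$ into $G$ is an embedding of $F$ into $\cH_r(G)$'' without spelling out the verification you give in the second paragraph.
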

 \begin{proof}
  Let $F$ be an $r$-uniform hypergraph with $\sigma(F) \le \Delta$ and $n$ vertices.
 By definition of $\sigma(F)$ there exists a graph $G'$ with $n$ vertices that hits $F$ and   $G'\in\cF^{(2)}(n,\Delta)$, and thus $G'$ can be embedded into  $G$. Since $G'$ hits $H$, the hypergraph $\cH_r(G)$ is defined in such a way that any embedding of $G'$ into $G$ is an embedding of $F$ into $\cH_r(G)$. 
 \end{proof}

Lemmas~\ref{lem:sigma} and~\ref{lem:Hruni} together imply Proposition~\ref{prop:universality} that yields constructions of sparse $\FnD$-universal hypergraphs based on universal graphs from~\cite{alon2007sparse,alon2008optimal}.

Next we turn to yet another possibility to construct $\FnD$-universal hypergraphs out of universal graphs. 
 For a given graph $G$ we define an $r$-uniform hypergraph $\cK_r(G)$ on the vertex set $V(G)$ with hyperedges being the vertex sets of the copies of $K_r$ in $G$.
\begin{lemma}\label{lem:shadow_univ}
Let $\Delta\ge 1$ and $r\ge 3$ be integers. If $G$ is a $\cF^{(2)}(n,(r-1)\Delta)$-universal  graph then
 $\cK_r(G)$ is an $\FnD$-universal hypergraph. In particular, $\cK_r(G)$ has at most $e(G) \Delta(G)^{r-2}$ edges.
\end{lemma}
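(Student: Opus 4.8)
The plan is to show that whenever $F\in\FnD$ we can find a graph $G'$ on the same vertex set with $\Delta(G')\le (r-1)\Delta$ such that the $K_r$-copies of $G'$ contain all edges of $F$ as vertex sets; then $\cF^{(2)}(n,(r-1)\Delta)$-universality of $G$ gives an embedding of $G'$ into $G$, and by construction of $\cK_r(G)$ this embedding is simultaneously an embedding of $F$ into $\cK_r(G)$. The natural choice for $G'$ is the shadow graph $F'$ of $F$ (in the notation of the paper's \textbf{Notation} subsection), i.e.\ replace every hyperedge $e\in E(F)$ by all $\binom{r}{2}$ pairs inside $e$. First I would check the degree bound: a vertex $v$ lies in at most $\Delta$ hyperedges of $F$, and each such hyperedge contributes at most $r-1$ new neighbours of $v$ in $F'$, so $\Delta(F')\le (r-1)\Delta$. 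Hence $F'\in\cF^{(2)}(n,(r-1)\Delta)$, so there is an embedding $\phi\colon V(F)\to V(G)$ with $\phi(F')\subseteq G$.

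Next I would verify that $\phi$ is an embedding of $F$ into $\cK_r(G)$. Fix $e=\{v_1,\dots,v_r\}\in E(F)$. Since $F'$ contains every pair inside $e$, the image set $\{\phi(v_1),\dots,\phi(v_r)\}$ spans a complete graph $K_r$ in $G$; therefore it is, by definition, a hyperedge of $\cK_r(G)$. As $\phi$ is injective and maps each hyperedge of $F$ onto a hyperedge of $\cK_r(G)$, it is an embedding, so $\cK_r(G)$ is $\FnD$-universal. Strictly speaking one should note that $F$ may have fewer than $n$ vertices; this is handled in the usual way by padding $F'$ with isolated vertices to get a graph on exactly $n$ vertices, which does not change the maximum degree.

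Finally, for the edge count: every hyperedge of $\cK_r(G)$ is the vertex set of a $K_r$ in $G$, and each $K_r$ is determined by choosing one of its edges $e=\{x,y\}\in E(G)$ together with the remaining $r-2$ vertices, all of which must be common neighbours of both $x$ and $y$; there are at most $\binom{\deg_G(x)}{r-2}\le \Delta(G)^{r-2}$ such choices (even crudely, at most $\Delta(G)^{r-2}$ ordered choices suffice for an upper bound). Summing over the $e(G)$ edges gives $e(\cK_r(G))\le e(G)\,\Delta(G)^{r-2}$, as claimed.

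I do not expect a serious obstacle here; the only point requiring a little care is the bookkeeping that the shadow graph $F'$ really lies in $\cF^{(2)}(n,(r-1)\Delta)$ (degree bound plus the padding to $n$ vertices) and that the map obtained is genuinely an embedding of the hypergraph, not merely of its shadow. The counting bound is a routine overcount and is the easiest part.
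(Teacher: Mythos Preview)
Your proposal is correct and follows essentially the same argument as the paper: pass to the shadow graph $F'$, use $\cF^{(2)}(n,(r-1)\Delta)$-universality of $G$ to embed $F'$, and observe that this embedding sends each hyperedge of $F$ onto a $K_r$ in $G$ and hence onto an edge of $\cK_r(G)$; the edge bound is obtained identically by extending each edge of $G$ to a clique in at most $\Delta(G)^{r-2}$ ways. Your write-up is in fact slightly more careful than the paper's (you spell out the degree bound and the verification that $\phi$ embeds $F$), and the padding remark is harmless though unnecessary since members of $\FnD$ already have exactly $n$ vertices.
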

\begin{proof}
 We notice first that the shadow graph $F'$ of any $F\in\FnD$ has $n$ vertices and maximum degree at most $(r-1)\Delta$, thus $F'\in \cF^{(2)}(n,(r-1)\Delta)$. Let $\varphi$ be an embedding of $F'$ into $G$. But then, by definition of  $\cK_r(G)$, $\varphi$ is also an embedding of $F$ into $\cK_r(G)$.
 
 Furthermore, we can extend any edge of $G$ to a clique in at most $\Delta(G)^{r-2}$ many ways. Therefore, 
 $e\left(\cK_r(G)\right)\le e(G) \Delta(G)^{r-2}$.
\end{proof}

The universal graphs constructed in~\cite{alon2007sparse,alon2008optimal} have $m$ edges and maximum degree $O(m/n)$. 
This yields by Proposition~\ref{prop:shadow_univ} constructions of $\FnD$-universal hypergraphs which are just some polylog factors away from the constructions obtained by applying Proposition~\ref{prop:universality}. 
However, if we take an appropriate random graph $G=G(n,p)$ then we get a better bound than $e(G) \Delta(G)^{r-2}$ on the cliques $K_r$ which leads to even sparser universal hypergraphs for $r\ge 4$.
\begin{proof}[Proof of Proposition~\ref{prop:shadow_univ}]
 To obtain the first part of the statement, we  take  $G$ to be the random graph $G(n,p)$ with $p=C(\ln n/n)^{1/((r-1)\Delta)}$. By the result of Dellamonica, Kohayakawa, R{\"o}dl and Ruci{\'n}ski~\cite{DKRR15}, 
$G$ is $\cF^{(2)}(n,(r-1)\Delta)$-universal a.a.s. Moreover, it is well known that the threshold for the appearance of $K_r$ in the random graph is $n^{-2/(r-1)}$ and that the number of cliques $K_r$ in $G(n,p)$ for $p\gg n^{-2/(r-1)}$ is 
$\Theta(n^rp^{\binom{r}{2}})$ a.a.s.\ Thus, the number of  edges in $H_1:=\cK_r(G)$ is therefore $\Theta(n^rp^{\binom{r}{2}})=\Theta\left(n^{r-\frac{r}{2\Delta}}(\ln n)^{\frac{r}{2\Delta}}\right)$. Now Lemma~\ref{lem:shadow_univ} implies that
$H_1$ is $\FnD$-universal.

As for the second part, so let $\eps>0$ and we take  
 $G$ to be the random graph $G((1+\eps)n,p)$ with $p=\omega(n^{-1/((r-1)\Delta-1)} \ln^5 n)$. By the result of Conlon, Ferber, Nenadov and \v{S}kori\'{c}~\cite{conlon2015almost},
$G$ is $\cF^{(2)}(n,(r-1)\Delta)$-universal a.a.s. We also get that $G$ has $\Theta(n^rp^{\binom{r}{2}})$ many cliques $K_r$ a.a.s. This yields an $\FnD$-universal hypergraph 
$H_2:=\cK_r(G)$ with $(1+\eps)n$ vertices and $\omega\left(n^{r-\frac{\binom{r}{2}}{(r-1)\Delta-1}}(\ln n)^{5\binom{r}{2}}\right)$ edges.
 
\end{proof}



\section{Concluding remarks}\label{sec:conclude}
We believe that it is possible to further reduce the number of edges towards the lower bound $\Omega(n^{r-r / \Delta})$ by adjusting the constructions in~\cite{alon2007sparse,alon2008optimal,ACKRRS01}  to hypergraphs. 
In particular, in view of Lemma~\ref{lem:Hruni}  it would be interesting to know whether it is true that $\sigma(F) \le \lceil 2 \Delta/r\rceil$ for all $F \in \FnD$, which would lead to almost optimal universal $r$-uniform hypergraphs for all $r \ge 3$. For $F \in \FnD$ it is easy to obtain a hitting graph $G$ with at most $n \Delta/r$ edges and thus of average degree at most $2 \Delta/r$, but the maximum degree could be $(r-1)\Delta$ and it is not clear how to reduce this below $\Delta$. One way it could be done is by applying Lov\'{a}sz local lemma but it doesn't seem to get us near $\lceil 2 \Delta/r\rceil$. We leave it to future work.

The bound on $p=\Omega\left((\ln n/n)^{1/\Delta}\right)$ in Theorem~\ref{theom_universality} is presumably not optimal and it comes from the fact that any $\Delta$-set of $\binom{[n]}{r-1}$ is expected to lie in roughly $p^\Delta n=\Omega(\ln n)$ many edges (a.a.s.\ by Chernoff's inequality) and thus, some reasonable natural embedding should always succeed. As for the lower bound, so if $\binom{t-1}{r-1}\le \Delta$ and $t$ divides $n$, then by a theorem of  Johansson, Kahn and Vu~\cite{JohanssonKahnVu_FactorsInRandomGraphs}, the factor of  $K^{(r)}_{t}$ (which is a collection of $n/t$ vertex-disjoint copies of  $K^{(r)}_{t}$) is a member of  $\FnD$. Since the threshold probability~\cite{JohanssonKahnVu_FactorsInRandomGraphs} for the appearance of such factor is $\Theta\left((\ln n)^{1/\binom{t}{r}} n^{-(t-1)/\binom{t}{r}}\right)$ and in view of $n^{-(t-1)/\binom{t}{r}}<n^{-1/\binom{t-1}{r-1}}$, the best lower bound on $p$ we are aware of is $\Omega\left((\ln n)^{1/\binom{t}{r}} n^{-(t-1)/\binom{t}{r}}\right)$.

 As already mentioned in the introduction,   Ferber, Nenadov and Peter studied in~\cite{ferber2013universality} universality of $G(n,p)$ for the class of  graphs $F$ on $n$ vertices 
 with maximum degree at most $\Delta$ and the maximal density $d$. They showed that the random graph $G(n,p)$ with $p=\omega(\Delta^{12} n^{-1/(2d)}\ln^3 n)$ is universal for this family. 
  The recent almost spanning result of Conlon, Ferber, Nenadov and \v{S}kori\'{c}~\cite{conlon2015almost} builds on~\cite{ferber2013universality}. 
 We believe that similar improvements can be obtained for random $r$-uniform hypergraphs ($r\ge 3$).

\providecommand{\bysame}{\leavevmode\hbox to3em{\hrulefill}\thinspace}
\providecommand{\MR}{\relax\ifhmode\unskip\space\fi MR }
\providecommand{\MRhref}[2]{%
  \href{http://www.ams.org/mathscinet-getitem?mr=#1}{#2}
}
\providecommand{\href}[2]{#2}
\def\MR#1{\relax}

\section*{Appendix}
The only difference in the proofs of Lemmas below to their graph counterparts in~\cite{Riordan} is that we work with the shadow graph of a hypergraph. 
\begin{proof}[Sketch of the proof of Lemma~\ref{le_XFdelta}]
This lemma is a straightforward adaptation of Lemma~4.4 from~\cite{Riordan}. Let $Y_F(H)$ be the number of labelled copies of $F$ in $H$. One observes that then $Y_F(H)/Y_F(\Krn)=X_F(H)/X_F(\Krn)$ holds. Since $Y_F(\Krn)=n!$, one needs to estimate $Y_F(H)$. We will embed first exactly one vertex from each of the $k_r(F)$ nontrivial components. This can be done in $(n)_{k_r(F)}$  ways. Next we can embed $(r-1)$ vertices of each component by embedding one particular edge. This can be done in at most $\Delta (r-1)!$ ways into $H$. This gives at most $\left(\Delta (r-1)!\right)^{k_r(F)}$ possibilities in total. Finally, all  the remaining $r(F)-k_r(F)$ vertices from the nontrivial components can be embedded in at most $((r-1)\Delta)^{r(F)-k_r(F)}$ ways. The isolated vertices can be embedded in at most $k_1(F)!$ ways. We estimate $Y_F(H)\le (n)_{k_r(F)} \left(\Delta (r-1)!\right)^{k_r(F)}((r-1)\Delta)^{r(F)-k_r(F)}k_1(F)!$. We obtain:
\begin{multline*}
\frac{Y_F(H)}{Y_F(\Krn)}\le \frac{(n)_{k_r(F)} \left(\Delta (r-1)!\right)^{k_r(F)}((r-1)\Delta)^{r(F)-k_r(F)}k_1(F)!}{n!}\le \\
 \frac{\left(2 (r-1)! \Delta\right)^{r(F)}e^{r(F)+(r-2)k_r(F)}}{n^{r(F)+(r-2)k_r(F)}}. 
\end{multline*}
\end{proof}

\begin{proof}[Sketch of the proof of Lemma~\ref{le_TH´´}]
The proof is similar to the proof of Lemma~4.5~\cite{Riordan}. One rewrites $ T_H^{\prime \prime}$ by going over all good connected hypergraphs $F$  on $s$ vertices (then $r(F)=s-(r-1)$ and $k_r(F)=1$) and upper bounds the sum as follows:
\begin{multline*}
 T_H^{\prime \prime}\le \sum_{s=r+1}^n \frac{\left(4 (r-1)! \Delta\right)^{s-r+1} e^{s-1}}{n^{s-1}}
 \sum_{V} \sum_{m=0}^{e_H(s)} {e_H(s) \choose m} (p^{-1}-1)^m \\
\le e^{r-2}\sum_{s=r+1}^n \frac{\left(12 r! \Delta\right)^{s-r+1}}{n^{s-1}} \sum_{V} p^{-e_H(s)},
\end{multline*}
where the second sum is over all $s$-element sets $V$ such that $H[V]$ is connected.

We consider the shadow graph $H^\prime$ of $H[V]$. Now every $V \subseteq [n]$ as above also induces a subgraph in $H^\prime$ which is connected and therefore contains a spanning tree. We can estimate the number of $V$s by estimating the number of labelled trees in $H^\prime$ on $s$ vertices and then unlabelling these. 

Given a labelled tree $G$ on $s$ vertices there are at most $n (\Delta(r-1)!)(\Delta(r-1))^{s-r}$ ways of mapping it into $H^\prime$: 
$n$ accounts for the first vertex of $G$, then (in $H$) we can choose next $(r-1)$ vertices at once in $\Delta(r-1)!$ ways, and finally every remaining vertex in at most $\Delta(r-1)$ ways since $\Delta(H^\prime)\le \Delta(r-1)$. We get at most $n (\Delta(r-1)!)(\Delta(r-1))^{s-r} s^{s-2}$ trees and unlabelling gives us at most 
\[
n (\Delta(r-1)!)(\Delta(r-1))^{s-r} s^{s-2}/s!\le n (\Delta r!)^{s-r+1} e^s
\]
sets $V$. This implies
\begin{align*}
 T_H^{\prime \prime} \le n e^{2r} \sum_{s=r+1}^n \left(\frac{12 r!^2 \Delta^2}{n}\right)^{s-1}  p^{-e_H(s)}.
\end{align*}
\end{proof}

\end{document}